\newtheorem{prop}{Proposition}[section]
\newtheorem{lm}[prop]{Lemma}
\newtheorem{thm}[prop]{Theorem}
\newtheorem{cor}[prop]{Corollary}
\newtheorem{rem}[prop]{Remark}
\newcommand{\convd}{\overset{d}{\underset{{n\rightarrow \infty}}{\longrightarrow}}}
\newcommand{\eq}{\begin{equation}}
\newcommand{\en}{\end{equation}}
\newcommand{\beq}{\begin{eqnarray*}}
\newcommand{\eeq}{\end{eqnarray*}}
\def\build#1_#2^#3{\mathrel{\mathop{\kern 0pt#1}\limits_{#2}^{#3}}}
\newcommand{\bP}{\mathbb{P}}
\newcommand{\bN}{\mathbb{N}}
\newcommand{\bR}{\mathbb{R}}
\newcommand{\bE}{\mathbb{E}}
\newcommand{\bK}{\mathbb{K}}
\newcommand{\bT}{\mathbb{T}}
\newcommand{\bX}{\mathbb{X}}
\newcommand{\cC}{\mathcal{C}}
\newcommand{\cG}{\mathcal{G}}
\newcommand{\cH}{\mathcal{H}}
\newcommand{\cR}{\mathcal{R}}
\newcommand{\cS}{\mathcal{S}}
\newcommand{\cT}{\mathcal{T}}
\newcommand{\cX}{\mathcal{X}}
\newcommand{\fs}{\mathbf{s}}
\newcommand{\bZ}{\mathbb{Z}}
\newcommand{\ft}{\mathbf{t}}
\newcommand{\modcut}{{\rm cut}_{\rm HW}}
\newcommand{\ourcut}{{\rm cut}_{\rm HW}^\circ}
\def\beqlb{\begin{eqnarray}}\def\eeqlb{\end{eqnarray}}
\def\beqnn{\begin{eqnarray*}}\def\eeqnn{\end{eqnarray*}}
\def\rar{\rightarrow}
\newcommand{\lambdaLaplace}{r}
\begin{document}

\title{\vspace{-0.7cm}
Gromov-Hausdorff-Prokhorov convergence of vertex cut-trees\\ of $n$-leaf Galton-Watson trees}
\author{
Hui He%
\thanks{%
Laboratory of Mathematics and Complex Systems, School of Mathematical Sciences, Beijing Normal University, Beijing 100875, P.R.China; email hehui@bnu.edu.cn}
\and
Matthias Winkel\thanks{%
Department of Statistics, University of Oxford, 1 South Parks Road, Oxford OX1 3TG, UK; email winkel@stats.ox.ac.uk}}

\maketitle

\vspace{-0.4cm}

\begin{abstract} In this paper we study the vertex cut-trees of Galton-Watson trees conditioned to have $n$ leaves. This notion is a slight variation of Dieuleveut's
  vertex cut-tree of Galton-Watson trees conditioned to have $n$ vertices. Our main result is a joint Gromov-Hausdorff-Prokhorov convergence in the finite variance case
  of the Galton-Watson tree and its vertex cut-tree to Bertoin and Miermont's joint distribution of the Brownian CRT and its cut-tree. The methods also apply to the
  infinite variance case, but the problem to strengthen Dieuleveut's and Bertoin and Miermont's Gromov-Prokhorov convergence to Gromov-Hausdorff-Prokhorov remains open
  for their models conditioned to have $n$ vertices.

\emph{AMS 2010 subject classifications: Primary 60J80; Secondary 60J25, 60F17.\newline
Keywords: Galton-Watson tree, stable tree, cut-tree, fragmentation at nodes, Invariance Principle, $\mathbb{R}$-tree, Continuum Random Tree, Gromov-Hausdorff-Prokhorov topology}
\end{abstract}

\section{Introduction}

Consider a \em rooted planar tree \em $(\ft,\rho)$. Specifically, $\ft$ consists of a finite vertex set $V(\ft)$ including the \em root \em $\rho\in V(\ft)$, a set
$E(\ft)$ of directed edges $u\rightarrow v$, one edge for each $u\in V(\ft)\setminus\{\rho\}$ without creating cycles, and a planar order, which we describe below. We
call $v$ the \em parent \em of $u$ and $k_v(\ft)=\#\{w\in V(\ft)\colon w\rightarrow v\}$ the \em number of children \em or \em degree \em of $v\in V(\ft)$. A vertex $v\in V(\ft)$ with
$k_v(\ft)=0$ is called a \em leaf\em. We denote by ${\rm Lf}(\ft)=\{v\in V(\ft)\colon k_v(\ft)=0\}$ the set of leaves of $\ft$, and by $\zeta(\ft)=\#V(\ft)$ and
$\lambda(\ft)=\#{\rm Lf}(\ft)$ the numbers of vertices and leaves, respectively. Non-leaf vertices, including the root, if $\zeta(\ft)\ge 2$, are called \em branch
points\em. The set of branch points is ${\rm Br}(\ft)=V(\ft)\setminus{\rm Lf}(\ft)$. The \em planar order \em specifies for each $v\in{\rm Br}(\ft)$ a total order on the
set of its $k_v(\ft)$ children. Unless otherwise stated, we will assume that $k_v(\ft)\neq 1$ for all $v\in V(\ft)$.
\begin{itemize}\item
Let $n=\lambda(\ft)$. We introduce \em our vertex splitting rule\em, as follows. Select a branch point at random, $v\in{\rm Br}(\ft)$ with probability
$(k_v(\ft)-1)/(n-1)$. Fragment the vertex set into $k_v(\ft)+1$ connected components by removing
the edges $w\rightarrow v$ from all the children $w$ of the selected branch point $v$. The component of $\rho$ now has $v$ as a leaf, while the $k_v(\ft)$ other components
are now rooted at the children of $v$.
We apply the splitting rule independently and repeatedly until all components are singleton leaves. We define \em our vertex cut-tree \em
$\ourcut(\ft)$ as the rooted planar tree taking as vertex set the set of components (subsets of $V(\ft)$) that ever exist, as edge relation the relation
between each component and its fragments, as root the initial single component ($V(\ft)$) that contains all vertices, and as planar order the order that has for the component split at $v$ the component of $v$ first and the other $k_v(\ft)$
components in the order their roots have in $\ft$ as children of $v$.
\end{itemize}
This is illustrated in Figure \ref{fig1}. Our notion of a cut-tree appears to be new, but is closely related to other cut-trees that have been studied and indeed have motivated us
for this work:
\begin{center}
\begin{figure}[t]\centering
\includegraphics[scale=0.57]{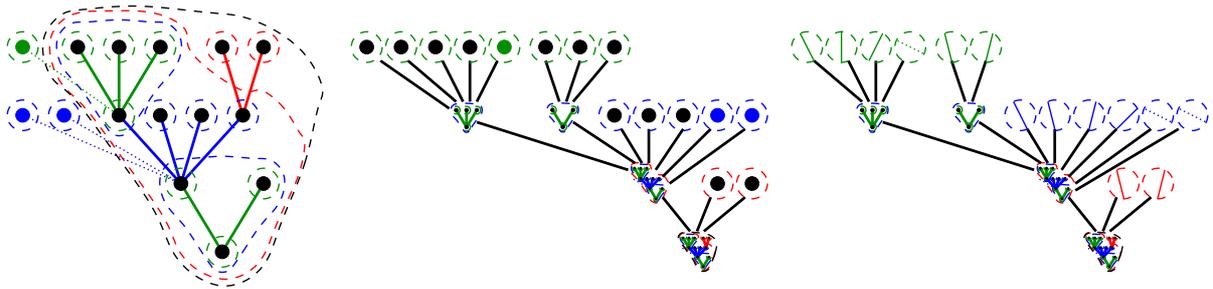}\vspace{-0.3cm}
  \caption{Illustration of $\mathbf{t}$, ${\rm cut}_{\rm HW}(\mathbf{t})$ and ${\rm cut}_{\rm D}(\widehat{\mathbf{t}})$ for $n=8$. Dotted lines capture components of ${\rm cut}_{\rm HW}(\ft)$. To see ${\rm cut}_{\rm HW}^\circ(\mathbf{t})$, omit the singleton components of ${\rm cut}_{\rm HW}(\mathbf{t})$ with coloured bullets.}
    \label{fig1}\vspace{-0.2cm}
\end{figure}
\end{center}
\begin{itemize}
  \item Let $n=\zeta(\ft)$. Meir and Moon \cite{MeirMoon} introduced an \em edge splitting rule\em, as follows. Select an edge uniformly at random. Remove the edge (as a singleton) and retain up to two further components (above/below). Pitman \cite{Pitman1999} and Bertoin \cite{Bertoin2012} studied the forest of components in connection to
    additive coalescents and forest fires. Bertoin and Miermont \cite{BM} introduced the associated \em edge cut-tree \em ${\rm cut}_{\rm BM}(\ft)$. In the case of
    finite-variance Galton-Watson trees conditioned to have $n$ vertices, they showed Gromov-Prokhorov (GP) convergence of tree and cut-tree to a pair
    $(\cT_{\rm Br},{\rm cut}(\cT_{\rm Br}))$ of Brownian Continuum Random Trees (CRTs).
  \item Let $n=\zeta(\ft)$. \em Dieuleveut's \cite{Die13} vertex splitting rule and vertex-cut tree \em ${\rm cut}_{\rm D}(\ft)$ are, as
    follows. Select $v\in{\rm Br}(\ft)$ with probability $k_v(\ft)/(n-1)$. Fragment the edge set into up to $2k_v(\ft)+1$
    components including all edges above the vertex as singletons. In the case of finite-variance Galton-Watson trees conditioned to have $n$ vertices,
    Dieuleveut showed GP convergence of the tree and her cut-tree to the same pair $(\cT_{\rm Br},{\rm cut}(\cT_{\rm Br}))$. She also obtained an
    infinite-variance result with a pair of stable CRTs as limiting trees.
  \item Let $n=\zeta(\ft)$. Broutin and Wang \cite{BW16} studied an \em inhomogeneous vertex splitting rule and vertex cut-tree \em ${\rm cut}_{p_n}(\ft)$ based on a
    distribution $p_n$ on vertices, and applied this to Camarri and Pitman's \cite{CP00} $p_n$-trees. They showed GP/Gromov-Hausdorff-Prokhorov (GHP)
    convergence of $p_n$-trees to Aldous and Pitman's inhomogeneous CRTs \cite{AP00} implies the convergence of pairs of trees and cut-trees in the same mode of
    convergence. This does not include conditioned Galton-Watson trees beyond a result for uniform trees of \cite{Bertoin2012}.
\end{itemize}
Before the constructions of cut-trees, the evolution of the root component had received particular attention
\cite{AD12b,ABBH14,Bertoin2012,Janson06,MeirMoon,Panholzer2006}. In the cut-tree, this \em pruning process \em corresponds to a single spine. Pruning processes of
Galton-Watson trees were studied by Aldous and Pitman \cite{AP98} under the edge splitting
rule, and by Abraham et al. \cite{ADH12} under our vertex splitting rule. Limit theorems for pruning processes were obtained in \cite{HW} in both cases. These are
for forests of Galton-Watson trees. In the domain of attraction of the Brownian forest, this is the same (up to the conditioning on numbers of leaves or vertices) as the
joint convergence of the tree and a spine of the cut-tree.

Let $\cG$ be a Galton-Watson tree. In our vertex cut-tree model, conditioning on $\lambda(\cG)=n$, the splitting rule turns out to give some random number
$k+1$ of conditioned Galton-Watson trees whose numbers of leaves add up to $n+1$. Hence, the cut-tree is almost a Markov branching tree in the sense of Haas and
Miermont \cite{HM10}. This property fails for all cut-trees of Galton-Watson trees conditioned on $\zeta(\cG)=n$, except for the edge cut-tree of a Poisson-Galton-Watson
tree, which gives the uniform model studied in \cite{AP98,Bertoin2012,Pitman1999}. Informally, the root component is biased by the number of its 
leaves. While in general, GHP convergence appears to be much harder to prove than GP convergence (hence the weaker results in
\cite{BM,Die13}), we present here a way to apply the results of \cite{HM10} and obtain the stronger mode of convergence.

One of the key ideas is not to focus on the number of leaves, but on $\overline{n}:=2n-1$. Then the ``split'' of $n$ leaves into $n_1+\cdots+n_{k+1}=n+1$
means that $\overline{n}_1+\cdots+\overline{n}_{k+1}=2(n+1)-k-1\le\overline{n}$ for all $k\ge 2$. We will obtain a Markov branching
cut-tree in terms of numbers $\overline{n}=2n-1$ associated with numbers $n$ of leaves. For $k\ge 3$, there is loss of mass, so we proceed, as follows.
\begin{itemize}\item Let $n=\lambda(\ft)$. We add $k-2$ singleton components to $\ourcut(\ft)$ for every split into $k+1$ components (summing to $2k-1=\overline{k}$ components), $k\ge 2$. We modify our
   vertex cut-tree to include the additional singleton components. We denote this vertex cut-tree by $\modcut(\ft)$.
\end{itemize}

\begin{prop}\label{propspl} Let $\cG^{(n)}$ be an $n$-leaf Galton-Watson tree with offspring distribution $\nu$. Then the vertex cut-tree $\modcut(\cG^{(n)})$ is a Markov branching tree with splitting probabilities
  $$q_{\overline{n}}(\#{\rm blocks}=\overline{k})
		=\frac{k-1}{k+1}\nu_k\frac{n+1}{n-1}\frac{\bP(S_{k+1}=n+1)}{\nu_0\bP(S_1=n)},\qquad k\ge 2,$$
  where $S_k=X_1+\cdots+X_k$ for independent ${\rm GW}(\nu)$-trees $\cG_j$ with $X_j=\lambda(\cG_j)$ leaves, $j\ge 1$; and given $\overline{k}$ blocks, the
  ranked block sizes are like the non-increasing rearrangement of $(\overline{X}_1,\ldots,\overline{X}_{k+1})$
  conditionally given $X_1+\cdots+X_{k+1}=n+1$, with an additional $k-2$ blocks of size 1 appended.
\end{prop}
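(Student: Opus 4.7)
The plan is to verify the one-step splitting law by a direct Galton--Watson decomposition and then iterate. Fix a planar tree $\ft$ with $\lambda(\ft)=n$ and a branch point $v\in{\rm Br}(\ft)$ of degree $k\ge 2$; write $\ft_0$ for the root component produced by the split (so $v\in{\rm Lf}(\ft_0)$) and $\ft_1,\ldots,\ft_k$ for the subtrees rooted at the children of $v$. Since $v$ has degree $0$ in $\ft_0$ but $k$ in $\ft$, the GW weight $\pi_\nu(\ft):=\prod_{u}\nu_{k_u(\ft)}$ factorises as
\[ \pi_\nu(\ft)=\frac{\nu_k}{\nu_0}\,\pi_\nu(\ft_0)\prod_{i=1}^{k}\pi_\nu(\ft_i), \]
and the component leaf counts $n_i:=\lambda(\ft_i)$ satisfy $n_0+n_1+\cdots+n_k=n+1$. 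Summing over the $n_0$ possible positions of $v$ among the leaves of $\ft_0$ and using $\bP(\cG^{(n)}=\ft)=\pi_\nu(\ft)/\bP(\lambda(\cG)=n)$ together with the splitting weight $(k-1)/(n-1)$, one obtains
\[ \bP\bigl(k_v=k,\ (\lambda(\ft_0),\ldots,\lambda(\ft_k))=(n_0,\ldots,n_k)\bigr)=\frac{1}{\bP(\lambda(\cG)=n)}\cdot\frac{k-1}{n-1}\cdot\frac{\nu_k}{\nu_0}\cdot n_0\prod_{i=0}^{k}\bP(X_1=n_i). \]

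Summing this identity over $(n_0,\ldots,n_k)$ with $n_0+\cdots+n_k=n+1$ and using exchangeability of the i.i.d. $X_i$'s yields
\[ \sum_{n_0+\cdots+n_k=n+1} n_0 \prod_{i=0}^{k}\bP(X_1=n_i)=\frac{1}{k+1}\sum (n_0+\cdots+n_k)\prod\bP(X_1=n_i)=\frac{n+1}{k+1}\bP(S_{k+1}=n+1), \]
which substituted in the previous display gives exactly the claimed $q_{\overline n}(\overline k)$ with $\overline k=2k-1$. Conditionally on the first split producing $k+1$ non-trivial blocks, the ordered tuple $(n_0,\ldots,n_k)$ is size-biased by $n_0$ relative to the exchangeable law $\prod \bP(X_1=n_i)/\bP(S_{k+1}=n+1)$. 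The same exchangeability, applied within the orderings of a fixed multiset $r$, shows that $\sum_{\text{orderings of }r}n_0$ equals $(n+1)/(k+1)$ times the number of orderings, so the $n_0$-bias cancels after ranking. Consequently the ranked block sizes coincide in law with the non-increasing rearrangement of $(X_1,\ldots,X_{k+1})$ conditioned on $S_{k+1}=n+1$; since $m\mapsto 2m-1$ is monotone, passing to the $\overline n$-convention and appending the $k-2$ singletons completes the description.

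The same factorisation of $\pi_\nu(\ft)$ shows that, conditionally on the sizes, $\ft_0,\ft_1,\ldots,\ft_k$ are independent, $\ft_i$ ($i\ge 1$) is GW$(\nu)$ conditioned on $\lambda(\ft_i)=n_i$, and $\ft_0$ is GW$(\nu)$ conditioned on $\lambda(\ft_0)=n_0$ with a uniformly chosen distinguished leaf $v$. Our splitting rule only depends on branch points, so $v$ plays no role in the further evolution of $\ft_0$, and each non-trivial component thus evolves as a fresh conditioned GW tree; the $k-2$ appended singletons are leaves of $\modcut(\cG^{(n)})$ and do not split further. Iterating gives the Markov branching property, with consistent bookkeeping in the $\overline n$-scale since $\sum_{i=0}^{k}(2n_i-1)+(k-2)=2(n+1)-(k+1)+(k-2)=2n-1=\overline n$. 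The main obstacle is the size-bias cancellation for the ranked block sizes, particularly when the multiset of sizes has repeated entries; the exchangeability argument above handles this cleanly, and the singleton-padding ensures that the total mass is preserved at each split, which is what permits the reduction to a genuine Markov branching tree.
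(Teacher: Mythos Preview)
Your argument is correct and follows essentially the same route as the paper: the GW weight factorisation $\pi_\nu(\ft)=\frac{\nu_k}{\nu_0}\pi_\nu(\ft_0)\prod_i\pi_\nu(\ft_i)$, the bijection between $(\ft,v)$ and $(\ft_0,v,\ft_1,\ldots,\ft_k)$ giving the factor $n_0$, the exchangeability trick to turn $\sum n_0$ into $\frac{n+1}{k+1}$, and the observation that the size-bias on the root component cancels upon passing to the ranked multiset. Your treatment of the Markov branching step (conditional independence given ordered sizes, then noting the distinguished leaf is irrelevant for further cuts) is slightly more streamlined than the paper's explicit multiset computation, but it is the same idea.
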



We use the following notation: let
\begin{itemize}\item $\cG^{(n)}$ be a Galton-Watson tree rooted at an ancestor and conditioned to have $n$ leaves,
  \item $\ourcut(\cG^{(n)})$ our vertex cut-tree of the beginning of this introduction, where in a tree with $n$ leaves a branch point with $k$ children is
    cut with probability $(k-1)/(n-1)$,
  \item and $\modcut(\cG^{(n)})$ its modification as just above Proposition \ref{propspl}, i.e.\ $\ourcut(\cG^{(n)})$ with $k-2$ singleton blocks added to the cut-tree when cutting a
    branch point with $k$ children.
\end{itemize}
The goal is to show that suitably scaled, we get convergence to $(\cT_{\rm Br},{\rm cut}(\cT_{\rm Br}))$, where $\cT_{\rm Br}$ is the Brownian CRT and
${\rm cut}(\cT_{\rm Br})$ is the Brownian cut-tree introduced by Bertoin and Miermont \cite{BM}, see Section \ref{secbrcut}. We assume for simplicity that the offspring distribution $\nu$ satisfies $\nu_1=0$. This is no loss of generality since
our conditioning does not affect single-child vertices. To pass from this special case to the case of a general offspring distribution, we can associate the offspring
distribution conditioned not to produce a single child and represent the desired Galton-Watson tree with single-child vertices as the tree with the conditioned offspring
distribution, but with edge lengths added that are independent geometrically distributed with success parameter $1-\nu_1$.

Let us modify $\cG^{(n)}$ to a 
\begin{itemize}\item random tree $\widehat{\cG}^{(n)}$ in which every branchpoint of $\cG^{(n)}$ with $k$ children has $k-2$ more children added, who themselves have no
    offspring.
\end{itemize}
If $\cG^{(n)}$ is binary, then $\widehat{\cG}^{(n)}=\cG^{(n)}$, with $2n-1$ vertices and $2n-2$ edges. In general, the effect of this modification is that the tree
with previously $n$ leaves but fewer than $2n-2$ edges receives $k-2$ new edges for any branch point of degree $k$, for all $k\ge 2$. We note an elementary lemma.
\begin{lm}\label{edgelemma} The random tree $\widehat{\cG}^{(n)}$ has $2n-1$ vertices and $2n-2$ edges almost surely.
\end{lm}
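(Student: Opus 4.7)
The statement is a deterministic consequence, for each realisation, of the assumption $\nu_1=0$ together with the hypothesis $\lambda(\cG^{(n)})=n$, so the proof is really a combinatorial identity that holds pointwise. The plan is to count vertices by degree and use the standard tree relation $\#E(\ft)=\#V(\ft)-1$.

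Write $b_k$ for the number of branch points of $\cG^{(n)}$ with exactly $k$ children. Because $\nu_1=0$, every non-leaf vertex has at least two children, so $b_k=0$ for $k=1$, and
\[
\#V(\cG^{(n)})=n+\sum_{k\ge 2}b_k,\qquad \#E(\cG^{(n)})=\sum_{k\ge 2}kb_k.
\]
Combining these with $\#E(\cG^{(n)})=\#V(\cG^{(n)})-1$ gives the key identity
\[
\sum_{k\ge 2}(k-1)b_k=n-1,
\]
which is really where the assumption $\lambda(\cG^{(n)})=n$ enters.

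Next I would count the modification. By construction, $\widehat{\cG}^{(n)}$ is obtained from $\cG^{(n)}$ by attaching, for each branch point of degree $k$, exactly $k-2$ extra children (which are leaves), so
\[
\#V(\widehat{\cG}^{(n)})=\#V(\cG^{(n)})+\sum_{k\ge 2}(k-2)b_k=n+\sum_{k\ge 2}(k-1)b_k=n+(n-1)=2n-1.
\]
Since $\widehat{\cG}^{(n)}$ is still a tree, $\#E(\widehat{\cG}^{(n)})=\#V(\widehat{\cG}^{(n)})-1=2n-2$.

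There is really no obstacle here; the only things to check are that the modification preserves the tree structure (it attaches pendant leaves, so no cycles are created) and that the identity $\sum_{k\ge 2}(k-1)b_k=n-1$ is stated for every planar tree with exactly $n$ leaves and no vertex of out-degree $1$, which is the content of the usual handshaking-type argument above. Since both counts depend only on $\lambda(\cG^{(n)})=n$ and the no-unary-vertex property (which holds by our standing assumption $\nu_1=0$), the conclusion holds almost surely.
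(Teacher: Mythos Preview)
Your proof is correct and is essentially a fleshed-out version of the paper's third sketch: the key identity $\sum_{k\ge 2}(k-1)b_k=n-1$ is exactly the observation that the cut weights $k_v-1$ sum to $n-1$, from which the edge count $\sum_{k\ge 2}(2k-2)b_k=2n-2$ follows. The paper also records two alternative one-line arguments (viewing the extra edges as those needed to resolve each branch point into a binary comb, or running a leaf-by-leaf growth process), but your degree-counting argument is the most direct and matches the paper's preferred route.
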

The modification of adding $k-2$ edges to $\cG^{(n)}$ is related to adding $k-2$ singleton components to $\ourcut(\cG^{(n)})$ to form
$\modcut(\cG^{(n)})$, which we did in order to obtain a Markov branching tree without loss of mass in Proposition \ref{propspl}. In both cases the
effect on the Gromov-Hausdorff (GH) distances of the trees is an elementary consequence of the definition (recalled in Section \ref{secGHP}):
\begin{lm}\label{lmcut} We have $d_{\rm GH}(\cG^{(n)},\widehat{\cG}^{(n)})\le 1$ and $d_{\rm GH}(\ourcut(\cG^{(n)}),\modcut(\cG^{(n)}))\le 1$.
\end{lm}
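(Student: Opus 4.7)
The plan is to prove both inequalities by exhibiting, in each case, an isometric embedding of the smaller tree into the larger one and bounding the Hausdorff distance between their images in the common ambient space. Recall that if $(X,d_X)$ embeds isometrically into $(Z,d_Z)$ via $\iota_X$ and $(Y,d_Y)$ embeds isometrically via $\iota_Y$, then $d_{\rm GH}(X,Y)\le d_{\rm H}^{Z}(\iota_X(X),\iota_Y(Y))$. So in both cases it suffices to exhibit the embedding and check that every vertex of the larger tree is within graph distance $1$ of a vertex of the smaller tree.

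For the first inequality, the construction of $\widehat{\cG}^{(n)}$ from $\cG^{(n)}$ is purely additive: at each branch point $v$ of degree $k\ge 2$ in $\cG^{(n)}$ one appends $k-2$ new pendant edges to $k-2$ new leaves, without modifying any pre-existing edge. Hence the identity on $V(\cG^{(n)})$ is an isometric embedding $\cG^{(n)}\hookrightarrow\widehat{\cG}^{(n)}$ for the respective graph metrics. Every added vertex is a leaf whose unique neighbour is a vertex $v\in V(\cG^{(n)})$, so it lies at graph distance exactly $1$ from the image of $\cG^{(n)}$. The Hausdorff distance inside $\widehat{\cG}^{(n)}$ between $V(\cG^{(n)})$ and $V(\widehat{\cG}^{(n)})$ is therefore at most $1$, giving $d_{\rm GH}(\cG^{(n)},\widehat{\cG}^{(n)})\le 1$.

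The second inequality is completely analogous. By construction, $\modcut(\cG^{(n)})$ is obtained from $\ourcut(\cG^{(n)})$ by attaching, at every cut-tree vertex corresponding to a split of a branch point with $k\ge 3$ children, exactly $k-2$ extra singleton-component leaves as additional children. The pre-existing edge relations and edge lengths of $\ourcut(\cG^{(n)})$ are untouched, so there is a natural isometric embedding $\ourcut(\cG^{(n)})\hookrightarrow\modcut(\cG^{(n)})$, and each added vertex is again at graph distance $1$ from its parent, which lies in the image of $\ourcut(\cG^{(n)})$. Hence the Hausdorff distance of the two vertex sets inside $\modcut(\cG^{(n)})$ is at most $1$, and $d_{\rm GH}(\ourcut(\cG^{(n)}),\modcut(\cG^{(n)}))\le 1$ follows.

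There is no real obstacle: the only point to check carefully is that the modifications only add pendant leaves (rather than subdividing edges or inserting internal vertices), so that the original vertex set embeds isometrically. This is immediate from the definitions of $\widehat{\cG}^{(n)}$ and $\modcut$, so the argument above is essentially a one-line Hausdorff-distance observation in each case.
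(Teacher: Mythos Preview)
Your argument is correct and is exactly the elementary Hausdorff-distance observation the paper intends; the paper itself gives no proof, merely calling the lemma an elementary consequence of the definition of $d_{\rm GH}$. The only cosmetic point is that the paper's $d_{\rm GH}$ is for \emph{pointed} spaces and carries an additional $d^Z(\Phi(\rho),\Phi'(\rho'))$ term, but since your embeddings send root to root this term vanishes and the bound of $1$ stands.
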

After scaling, as $n\rightarrow\infty$, the GH scaling limits will be identical, i.e. the scaled pair converges to the same limiting tree.
Comparison in the GHP distance $d_{\rm GHP}$ is less straightforward.

Recall that Dieuleveut's vertex cut-tree ${\rm cut}_{\rm D}(\ft )$ selects each branch point with $k$ children with probability proportional to $k$, while our vertex
cut-tree $\modcut(\ft )$ selects each branch point with $k$ children with probability proportional to $k-1$. Now note that $\widehat{\cG}^{(n)}$ has $2k-2\ge 2$
children wherever $\cG^{(n)}$ has $k\ge 2$ children, and in $\widehat{\cG}^{(n)}$, Dieuleveut would select a branch point with $2k-2$ children with probability proportional
to $2k-2=2(k-1)$. Hence, we can couple the constructions of $\modcut(\cG^{(n)})$ and ${\rm cut}_{\rm D}(\widehat{\cG}^{(n)})$. However, Dieuleveut proceeds slightly differently when building the cut-tree. The branch points of $\modcut(\cG^{(n)})$ and ${\rm cut}_{\rm D}(\widehat{\cG}^{(n)})$ can be taken the
same, but the numbers of leaves at any particular branch point are typically different, while the total numbers of leaves are $2n-1$ and $2n-2$, respectively. See e.g. Figure \ref{fig1}. 

Specifically, for
$v\in{\rm Br}(\cG^{(n)})$ with $k=k_v(\cG^{(n)})$ children, our cut-tree $\modcut(\cG^{(n)})$ always has $k+1$ main components, some of which may be singleton vertices,
and $k-2$ more singleton components, giving $2k-1$ altogether. On the other hand, ${\rm cut}_{\rm D}(\widehat{\cG}^{(n)})$ records components of the edge set, and
depending on when the $k$ edges are removed, they may or may not have subtrees above them. As an extreme example, suppose that all $k$ of them
initially had subtrees above them, and $v$ is not the root. If this is the first split, there are $k+1$ components above and below, plus a further $k$ singletons for the
removed edges, $2k+1$ altogether. If, however, this is the last split, there are only the $k$ singletons, all other ``components'' already being empty.

\begin{prop}\label{propcut}  We have $d_{\rm GH}({\rm cut}_{\rm D}(\widehat{\cG}^{(n)}),\modcut(\cG^{(n)}))\le 1$ for a suitable coupling.
\end{prop}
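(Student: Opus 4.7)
My plan is to couple the two constructions so that corresponding branch points of $\cG^{(n)}$ are selected at each stage, and then to exhibit a correspondence $R$ between the vertex sets of the two cut-trees of distortion at most $2$, yielding $d_{\rm GH}(\modcut(\cG^{(n)}),{\rm cut}_{\rm D}(\widehat{\cG}^{(n)}))\le \mathrm{dis}(R)/2\le 1$.

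\textbf{Coupling via matching selection probabilities.} First, for any sub-tree $T'$ of $\cG^{(n)}$ with $n'$ leaves, I would verify that the corresponding hatted sub-tree in $\widehat{\cG}^{(n)}$ has exactly $2n'-1$ vertices, by adapting the counting proof of Lemma \ref{edgelemma}. This persists for the below sub-tree after a cut at $v$, because the $k_v-2$ added children of $v$ are removed alongside $v$'s other children, making $v$ a leaf of both the original and the hatted below sub-tree. At a branch point $w$ inside such a sub-tree, my splitting rule selects $w$ with probability $(k_w(\cG^{(n)})-1)/(n'-1)$, while Dieuleveut's rule (applied in the hatted sub-tree, which has $2n'-1$ vertices and $2k_w(\cG^{(n)})-2$ children at $w$) selects $w$ with probability $(2k_w(\cG^{(n)})-2)/(2n'-2)=(k_w(\cG^{(n)})-1)/(n'-1)$. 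The two probabilities agree, so I couple the two constructions to always select the same branch point at every step.

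\textbf{Correspondence and distortion.} Under this coupling the internal vertices of $\modcut(\cG^{(n)})$ and ${\rm cut}_{\rm D}(\widehat{\cG}^{(n)})$ are in canonical bijection via the selected branch points, and the tree of nested sub-components coincides in both models. At the split of a branch point $v$ with $k$ children of which $\ell$ are leaves of $\cG^{(n)}$, the pendant leaves attached to this split vertex are: in $\modcut$, the $\ell$ singleton leaf-child components, the $k-2$ added singleton components, and, if $v=\rho$, the singleton $\{\rho\}$ from the below component; in ${\rm cut}_{\rm D}$, the $2k-2$ singleton edges cut at $v$. I define $R$ by matching internal vertices in bijection, matching these pendants in an arbitrary one-to-one fashion up to the smaller count, and pairing any leftover pendants with the corresponding split vertex on the opposite side; non-trivial sub-components are matched recursively. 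Cut-tree graph distances between corresponding internal vertices coincide under the coupling, and matched pendants sit at distance $1$ from their local split vertex in both trees, so their pairwise distances also agree. A pendant paired with the opposing split vertex contributes a discrepancy of exactly $1$ (distance $1$ on one side, $0$ on the other); combining two such mispairings via the triangle inequality shows that the worst-case discrepancy in pairwise distances is at most $2$, so $\mathrm{dis}(R)\le 2$ as required.

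\textbf{Main obstacle.} The main obstacle is the bookkeeping across the two models: the pendant leaves arise from structurally different sources and their counts differ by a split-dependent amount (roughly $k-\ell$ extras on the ${\rm cut}_{\rm D}$ side when $v\neq\rho$), so one must verify that the identity $|V(\widehat T')|=2n'-1$ for hatted sub-trees persists throughout the recursion, that every vertex on both sides is covered by $R$, and that boundary cases like $v=\rho$ (where $\modcut$ carries the extra singleton $\{\rho\}$ absent in ${\rm cut}_{\rm D}$) are handled consistently.
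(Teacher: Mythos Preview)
Your approach is exactly what the paper intends: the paper does not give a detailed proof of this proposition, relying instead on the discussion in the introduction, which observes that $k_v(\widehat{\cG}^{(n)})=2(k_v(\cG^{(n)})-1)$ makes Dieuleveut's selection probability on $\widehat{\cG}^{(n)}$ coincide with ours on $\cG^{(n)}$, so that the two cut-trees can be coupled to share the same internal (branch-point) structure and differ only in the number of pendant leaves at each internal vertex. Your correspondence $R$ and the distortion bound $\mathrm{dis}(R)\le 2$ are the natural way to make this precise.

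Two small points of bookkeeping to tighten. First, your $\ell$ should count children of $v$ that are leaves of the \emph{current component}, not merely original leaves of $\cG^{(n)}$: a branch-point child $c$ of $v$ may be split before $v$, in which case $\{c\}$ also appears as a singleton above-component when $v$ is split (and correspondingly, the above edge-component at $c$ in ${\rm cut}_{\rm D}$ is empty). Second, the condition ``$v=\rho$'' should read ``$v$ is the root of its current component'': this happens not only for the original root but for any vertex whose parent has already been split. Neither correction affects your distortion argument, since the only structural fact you use is that every vertex of either cut-tree is an internal vertex (matched isometrically) or a pendant leaf at distance $1$ from a matched internal vertex; the exact pendant counts are irrelevant to the bound $\mathrm{dis}(R)\le 2$.
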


Turning to $d_{\rm GHP}$, the question arises what mass measures we place onto the cut-trees. Bertoin, Miermont and Dieuleveut actually consider trees with $n$ edges ($n-1$ vertices) and obtain cut-trees with $n$ leaves, so it is natural to put the uniform measure in leaves onto their cut-trees in their framework.
In our framework, we equip $\modcut(\cG^{(n)})$ with the uniform measure on its $2n-1=\overline{n}$ leaves and ${\rm cut}_{\rm D}(\widehat{\cG}^{(n)})$ with the
uniform measure on its $2n-2$ leaves. We also equip $\cG^{(n)}$ with the uniform measure on its $n$ leaves and $\widehat{\cG}^{(n)}$ with the uniform
measure on its $2n-2$ edges. Our programme has three steps, here given for the finite variance case, for suitable $c_n$ and $c_n^\prime$, which will be discussed in Sections 
\ref{sec2.1} and \ref{sec3.1}, respectively. We show
\begin{enumerate}
  \item $\modcut(\cG^{(n)})/c_n^\prime\rightarrow\cT_{\rm Br}$ in GHP, using the Markov branching convergence criterion of \cite{HM10}, deduce $({\rm cut}_{\rm D}(\widehat{\cG}^{(n)})/c_n^\prime,\modcut(\cG^{(n)})/c_n^\prime,\ourcut(\cG^{(n)})/c_n^\prime)\rightarrow(\cT_{\rm Br},\cT_{\rm Br},\cT_{\rm Br})$ in GH$^3$;
  \item $\widehat{\cG}^{(n)}/c_n\rightarrow\cT_{\rm Br}$ in GHP, based on \cite{Mie08,deR15}, deduce $(\cG^{(n)}/c_n,\widehat{\cG}^{(n)}/c_n)\rightarrow(\cT_{\rm Br},\cT_{\rm Br})$ in GHP$^2$.
  \item $(\widehat{\cG}^{(n)}/c_n,{\rm cut}_{\rm D}(\widehat{\cG}^{(n)})/c_n^\prime)\rightarrow (\cT_{\rm Br},{\rm cut}(\cT_{\rm Br}))$, in GP$^2$, adapting the arguments of \cite{Die13}.
\end{enumerate}
Here GHP, GH$^3$, GHP$^2$ and GP$^2$ denote convergences in distribution on product spaces, where each component is equipped with the GHP, GH or GP topologies, as appropriate, see Section \ref{secGHP}. We deduce that $\cT_{\rm Br}\overset{d}{=}{\rm cut}(\cT_{\rm Br})$, as was already shown in \cite{BM}. More importantly, we conclude:
\begin{thm}\label{thm1} With any finite-variance offspring distribution $\displaystyle(\cG^{(n)}/c_n,\widehat{\cG}^{(n)}/c_n)\rightarrow(\cT_{\rm Br},\cT_{\rm Br})$ in {\rm GHP}$^2$ in distribution,
  jointly with $(\modcut(\cG^{(n)})/c_n^\prime,{\rm cut}_{\rm D}(\widehat{\cG}^{(n)})/c_n^\prime)\rightarrow({\rm cut}(\cT_{\rm Br}),{\rm cut}(\cT_{\rm Br}))$
  in {\rm GHP}$^2$, as $n\rightarrow\infty$ in $\{n\ge 1\colon\bP(\lambda(\cG)=n)>0\}$ for an associated Galton-Watson tree $\cG$.
\end{thm}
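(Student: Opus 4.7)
The plan is to execute the three-step programme laid out immediately above the theorem and then assemble the pieces into a single joint GHP$^4$ statement. For \emph{Step 1}, I would apply the Markov branching convergence criterion of Haas--Miermont \cite{HM10} to $\modcut(\cG^{(n)})$, whose splitting law is explicitly given by Proposition \ref{propspl}. In the finite variance regime, a standard local limit theorem for the i.i.d.\ leaf-count sum $S_{k+1}$ together with the finite variance asymptotics of $\bP(S_1=n)$ gives convergence of the rescaled dislocation measure to Aldous' Brownian dislocation measure, yielding $\modcut(\cG^{(n)})/c_n^\prime\to\cT_{\rm Br}$ in GHP. Proposition \ref{propcut} and Lemma \ref{lmcut} bound the relevant $d_{\rm GH}$ by $1$ under explicit couplings, and a direct comparison of the leaf-uniform measures on the three cut-trees --- which differ by $O(1)$ atoms of mass $1/\overline{n}$ --- upgrades this to joint GHP convergence of $(\ourcut(\cG^{(n)}),\modcut(\cG^{(n)}),{\rm cut}_{\rm D}(\widehat{\cG}^{(n)}))/c_n^\prime$ to the diagonal $(\cT_{\rm Br},\cT_{\rm Br},\cT_{\rm Br})$.

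For \emph{Step 2}, I would invoke the known scaling limit of Galton-Watson trees conditioned on their number of leaves, $\cG^{(n)}/c_n\to\cT_{\rm Br}$ in GHP in the finite variance case (via \cite{deR15} or a standard reduction from \cite{Mie08}). For $\widehat{\cG}^{(n)}$, the metric comparison is controlled by Lemma \ref{lmcut} and Lemma \ref{edgelemma}, and the measure comparison between the uniform measure on $n$ leaves of $\cG^{(n)}$ and the uniform measure on $2n-2$ edges of $\widehat{\cG}^{(n)}$ follows from a weak-law argument on the contour encoding (both measures concentrate asymptotically on the mass measure of $\cT_{\rm Br}$). This yields joint GHP$^2$ convergence of $(\cG^{(n)},\widehat{\cG}^{(n)})/c_n$ to the diagonal $(\cT_{\rm Br},\cT_{\rm Br})$.

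For \emph{Step 3}, I would adapt Dieuleveut's proof \cite{Die13} of GP$^2$ convergence from the $\zeta$-conditioning to the $\lambda$-conditioning. Dieuleveut samples $k$ i.i.d.\ uniform vertices and analyses the branch structure together with the cut-marks along their spanning subtree, exploiting invariance properties of the underlying fragmentation. The same scheme can be run under the $\lambda$-conditioning using the contour encoding for $\lambda$-conditioning (as developed by Kortchemski) and the explicit coupling of $\modcut$ and ${\rm cut}_{\rm D}$ from Proposition \ref{propcut}, yielding $(\widehat{\cG}^{(n)}/c_n,{\rm cut}_{\rm D}(\widehat{\cG}^{(n)})/c_n^\prime)\to(\cT_{\rm Br},{\rm cut}(\cT_{\rm Br}))$ in GP$^2$.

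To conclude, marginal GHP convergence of each of the four components (from Steps 1 and 2) yields joint tightness in GHP$^4$ by Prokhorov's theorem on the Polish space of measured compact trees. Any GHP$^4$ subsequential limit must have identical pairs of trees and identical pairs of cut-trees (Steps 1 and 2), and the tree/cut-tree marginal must have the Bertoin--Miermont law (Step 3, since projection from GHP to GP is continuous). This pins down the joint GHP$^4$ law uniquely as the claimed $(\cT_{\rm Br},\cT_{\rm Br},{\rm cut}(\cT_{\rm Br}),{\rm cut}(\cT_{\rm Br}))$. The main obstacle will be Step 3: Dieuleveut's argument is tailored to the $\zeta$-conditioning encoding, and transporting it to $\lambda$-conditioning --- while tracking the fragmentation marks through the Proposition \ref{propcut} coupling --- requires care, particularly in the asymptotic analysis of how many cut-marks fall on a sampled spanning subtree. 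A secondary technical point is the measure upgrade in Step 1, where the $O(1)$ discrepancy between leaf-counts of the three cut-trees must be verified to vanish under the Prokhorov distance after $c_n^\prime$ rescaling.
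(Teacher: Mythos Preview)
Your three-step outline and the final tightness/subsequence assembly are essentially the paper's strategy. The gap is in Step 1, specifically the ``measure upgrade''. You assert that the leaf-uniform measures on $\modcut(\cG^{(n)})$ and ${\rm cut}_{\rm D}(\widehat{\cG}^{(n)})$ ``differ by $O(1)$ atoms of mass $1/\overline{n}$''. This is not correct. The total leaf counts are $2n-1$ and $2n-2$, but under the Proposition \ref{propcut} coupling the local leaf counts at a given branch point typically differ: when a vertex $v$ with $k_v=k$ children is cut, $\modcut$ always attaches $2k-1$ children at that node of the cut-tree, whereas ${\rm cut}_{\rm D}$ attaches between $k$ and $2k+1$, the exact number depending on how many of the subtrees above $v$ have already been exhausted. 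These local discrepancies aggregate over $\Theta(n)$ branch points and do not sit inside a single Hausdorff $1$-neighbourhood, so one cannot bound the Prokhorov distance by $O(1/c_n^\prime)$ from the GH bound alone. The paper flags exactly this point (``Comparison in the GHP distance $d_{\rm GHP}$ is less straightforward'') and therefore claims only GH$^3$, not GHP$^3$, in Step 1.

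Because Step 1 yields only $\cT_3=\cT_4$ as unmeasured trees, your final assembly, as written, cannot pin down $\mu_3=\mu_4$. The paper closes this gap with Proposition \ref{propintro}: the Brownian mass measure $\mu_{\rm Br}$ is a measurable function of the unmeasured tree $\cT_{\rm Br}$. In the subsequential-limit argument, one then knows that $(\cT_3,\mu_3)$ is a Brownian CRT (GHP convergence of $\modcut$ from Haas--Miermont) and that $(\cT_4,\mu_4)$ is the cut-tree marginal of the Bertoin--Miermont pair (GP from Step 3), hence also a Brownian CRT; Proposition \ref{propintro} forces both $\mu_3$ and $\mu_4$ to be the same measurable function of $\cT_3=\cT_4$, whence $(\cT_3,\mu_3)=(\cT_4,\mu_4)$ a.s. Without this proposition (or a genuine, and apparently nontrivial, direct GHP comparison of the two cut-tree measures), your argument does not identify the joint GHP$^4$ limit.
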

Given the three steps, the remaining proof is mainly a standard argument via tightness and uniqueness of subsequential limit distributions, see Section \ref{secpfthm}, but also requires the following result, which is part of the folklore on the Brownian CRT $(\cT_{\rm Br},\mu_{\rm Br})$, but we were unable to locate it in the literature, so we quickly derive it from well-known results in Section \ref{secpfthm}.

\begin{prop}\label{propintro} The measured tree $(\cT_{\rm Br},\mu_{\rm Br})$ is a measurable function of the unmeasured $\cT_{\rm Br}$.
\end{prop}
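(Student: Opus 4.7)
The strategy is to realise $\mu_{\rm Br}$ as an intrinsic metric invariant of $\cT_{\rm Br}$, namely a constant multiple of a Hausdorff measure. First I would invoke the exact Hausdorff measure result for the Brownian CRT, due to Duquesne and Le~Gall: there exist a deterministic constant $c>0$ and an explicit gauge function $\phi:(0,\infty)\to(0,\infty)$ such that, almost surely,
\[\mu_{\rm Br}(A) \;=\; c\,\cH^\phi(A)\qquad\text{for every Borel }A\subseteq\cT_{\rm Br},\]
where $\cH^\phi$ denotes the $\phi$-Hausdorff measure computed from the tree's own metric. This reduces the problem to showing that $\cH^\phi$ is itself a measurable function of the metric space.

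The second step is to verify that, for this fixed gauge $\phi$, the assignment $(T,d)\mapsto(T,d,\cH^\phi_{(T,d)})$ is a Borel measurable map from the GH space of compact $\bR$-trees to the GHP space of measured compact $\bR$-trees. The prelimit quantities
\[\cH^\phi_\delta(A)=\inf\Big\{\sum_i\phi({\rm diam}(U_i))\colon A\subseteq\bigcup_i U_i,\;{\rm diam}(U_i)\le\delta\Big\}\]
can, by compactness and separability, be computed as a countable infimum over coverings by balls of rational radii centred at a fixed countable dense subset. Each $\cH^\phi_\delta$ is therefore a measurable function of the metric space, and so is its monotone limit $\cH^\phi$ as $\delta\downarrow 0$. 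Combined with Step~1, this yields a Borel measurable map $\Phi$ with $\Phi(\cT_{\rm Br})=(\cT_{\rm Br},\mu_{\rm Br})$ almost surely.

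The main obstacle is executing the measurability in Step~2 rigorously with respect to the chosen Polish structure on the GH/GHP spaces; this is usually done by fixing a Borel realisation of compact $\bR$-trees via embedding into a common ambient Polish space (for example $\ell^\infty(\bN)$), so that pushing forward $\cH^\phi$ and the subsequent identifications up to isometry remain measurable. If one prefers to bypass Hausdorff-measure machinery altogether, an alternative is to use the ball-mass asymptotics for the CRT (again due to Duquesne and Le~Gall): for $\mu_{\rm Br}$-a.e.\ $x\in\cT_{\rm Br}$ one has $\mu_{\rm Br}(B(x,r))/\phi(r)\to c$ as $r\downarrow 0$, whence $\mu_{\rm Br}$ can be recovered from the metric through the standard differentiation theorem for measures.
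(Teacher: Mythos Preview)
Your approach via the exact Hausdorff-measure identification is sound and is a genuinely different route from the paper's. The paper does not invoke the Duquesne--Le~Gall Hausdorff measure result at all; instead it shows that the $\varepsilon$-erasure $R_\varepsilon(\cT_{\rm Br})$, equipped with $\varepsilon$ times the restricted length measure $\ell_{\rm Br}|_{R_\varepsilon(\cT_{\rm Br})}$, converges to $(\cT_{\rm Br},\mu_{\rm Br})$ in GHP almost surely as $\varepsilon\downarrow 0$. This is obtained by translating the question into the almost-sure locally uniform convergence $H^{(\varepsilon)}\to 2B^{\rm ex}$ of the piecewise-linear height functions built from alternating $\varepsilon$-up/down-crossings of Brownian motion (Pitman's construction), then disintegrating under the It\^o excursion measure to pass to the normalised excursion. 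Since $R_\varepsilon$ and the length measure are manifestly metric invariants and the GHP limit exists almost surely, the measurable function comes for free.

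Your route buys something the paper's does not: it extends immediately to stable trees via the same Duquesne--Le~Gall theorem, whereas the paper explicitly remarks that its $\varepsilon$-erasure argument does \emph{not} adapt easily to the stable case and leaves that open. Conversely, the paper's argument sidesteps your Step~2 entirely, replacing the abstract GH-to-GHP measurability verification by a concrete almost-sure limit along a deterministic family, which is arguably cleaner to execute. One small caveat on your alternative via ball-mass asymptotics: the known local density results for the CRT are $\limsup$ statements rather than genuine limits, so the sentence as written is slightly optimistic; a $\limsup$ identity is still enough to recover the measure via Rogers--Taylor density arguments, but you should phrase it that way.
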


This proposition will also hold for stable trees, but the argument would be more involved, and since we do not need this here, we do not work out the details.

The structure of this paper is as follows. In Section \ref{sec2}, we note a local limit theorem for the number of leaves, recall the three relevant topologies GP, GH and
GHP, we prove Proposition \ref{propintro}, and we deduce Theorem \ref{thm1} from the three steps given above. In Section \ref{secmain}, we turn to the three main steps and hence complete the above programme
in the finite variance case, and we indicate how corresponding results in the stable case can be approached. Appendix \ref{appA} includes an auxiliary result to deduce joint GHP convergence from joint GP
convergence, which we do not use in this final version, but which may be of independent interest. We also include the brief Appendix \ref{appB} summarising the use of
different normalisations of the Brownian CRT in the literature.

\section{Preliminaries}\label{sec2}

\subsection{A local limit theorem for the number of leaves}\label{sec2.1}

Consider a critical offspring distribution $\nu$ in the domain of attraction of a stable distribution with index $\alpha\in(1,2]$. Specifically, suppose that for a random walk $W$ with step distribution $\bP(W_1=n)=\nu_{n+1}$, $n\ge -1$,
\begin{equation}\label{regvarass}\frac{W_n}{a_n}\convd X_1,
\end{equation}
where $a_n$ is regularly varying with index $\alpha$ and $\bE(\exp(-\lambdaLaplace X_1))=\exp(\lambdaLaplace^\alpha)$.  
Then the classical local limit theorem holds for $W$, see Ibragimov and Linnik \cite[Theorem 4.2.1]{IbL71}, or Kortchemski \cite[Theorem 1.10]{Kor12} for a statement:
$$\sup_{k\in\bZ}\left|a_n\bP(W_n=k)-p_1\left(\frac{k}{a_n}\right)\right|\rightarrow 0\qquad\mbox{as $n\rightarrow\infty$,}$$
where $p_1$ is the continuous density of $X_1$, which is $p_1(x)=\frac{1}{2\sqrt{\pi}}\exp(-x^2/4)$, $x\in\bR$, for $\alpha=2$.

Consider the stopping times $K_0=0$ and $K_{n+1}=\inf\{k\ge K_n+1\colon W_k-W_{k-1}=-1\}$ of down-moves and the time-changed process $\widetilde{W}_n=W_{K_n}$, $n\ge 0$, of values after down-moves. This can be viewed as a transformation on trees that in some sense removes all non-leaf branch points. See Rizzolo \cite{Riz13} for generalisations removing all branch points with multiplicities not in a set $A\subset\bN$. Note that the original tree can be recovered from $W$, but not in general from $\widetilde{W}$. Effectively, some of the leaves of the tree encoded in $W$ now act as branch points of the transformed tree encoded in $\widetilde{W}$ (replacing one or more removed branch points).
\begin{lm}\label{lem2.1} The increment distribution of $\widetilde{W}$ is in the domain of attraction of the same stable distribution as $\nu$. Specifically,
  $$\frac{\widetilde{W}_n}{\widetilde{a}_n}\convd X_1,$$
  where $\widetilde{a}_n=a_n/\nu_0^{1/\alpha}$. If $W_1$ has finite variance $\sigma^2$, we can choose $a_n=\sigma\sqrt{n/2}$.
\end{lm}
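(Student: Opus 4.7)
The plan is to recognise $\widetilde{W}$ as the random walk $W$ subordinated to the renewal times of its $-1$-steps, and then to transfer the stable limit from $W$ to $\widetilde{W}$ by a random time-change. The indicators $\ind_{\{W_k-W_{k-1}=-1\}}$, $k\ge 1$, form an i.i.d.\ Bernoulli$(\nu_0)$ sequence, so $K_n$ is the $n$-th success time, i.e.\ a sum of $n$ i.i.d.\ geometric variables with mean $1/\nu_0$. The strong law of large numbers therefore yields $K_n/n\to 1/\nu_0$ almost surely.

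Next, I would apply Anscombe's theorem in its stable domain-of-attraction form: the hypothesis $W_n/a_n\to X_1$ in distribution, together with $K_n/n\to 1/\nu_0>0$, gives $W_{K_n}/a_{K_n}\to X_1$ in distribution. Setting $\widetilde{a}_n:=a_n/\nu_0^{1/\alpha}$, the decomposition
\[
\frac{\widetilde{W}_n}{\widetilde{a}_n}=\frac{W_{K_n}}{a_{K_n}}\cdot\frac{a_{K_n}}{\widetilde{a}_n}
\]
combined with regular variation of $a_n$ (of index $1/\alpha$, as forced by the form of the stable limit) and $K_n/n\to 1/\nu_0$ gives $a_{K_n}/\widetilde{a}_n=\nu_0^{1/\alpha}\,a_{K_n}/a_n\to 1$ in probability. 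Slutsky's lemma then yields $\widetilde{W}_n/\widetilde{a}_n\to X_1$, which is the first claim.

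For the finite variance specialisation, criticality of $\nu$ gives $\bE[W_1]=0$ and $\mathrm{Var}(W_1)=\sigma^2$, so the classical CLT yields $W_n/(\sigma\sqrt{n})\to\cN(0,1)$. The density $p_1(x)=\frac{1}{2\sqrt{\pi}}\exp(-x^2/4)$ identifies the stable limit as $X_1\sim\cN(0,2)$, so $\sqrt{2}\,\cN(0,1)\ed X_1$ and consequently $W_n/(\sigma\sqrt{n/2})\to X_1$, confirming that one may take $a_n=\sigma\sqrt{n/2}$. The one delicate point in the above is the rigorous invocation of Anscombe's theorem for random indexing in the stable regime (the case $\alpha=2$ is classical Anscombe, and the stable case is standard, see e.g.\ Gut); a clean alternative is to lift $W_n/a_n\to X_1$ to the functional invariance principle $(W_{\lfloor nt\rfloor}/a_n)_{t\ge 0}\Rightarrow (X_t)_{t\ge 0}$ in Skorokhod space and apply a composition lemma, using that the stable L\'evy process $X$ almost surely has no fixed jump at the deterministic time $1/\nu_0$, to obtain $W_{K_n}/a_n\to X_{1/\nu_0}\ed\nu_0^{-1/\alpha}X_1$.
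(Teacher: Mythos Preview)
Your proof is correct and takes a genuinely different route from the paper's. The paper works directly with Laplace transforms: it writes $\widetilde{W}_1=A_1+\cdots+A_G-1$ with $G\sim{\rm geom}(\nu_0)$ independent of the i.i.d.\ up-moves $A_j$, expresses the Laplace transform of $\widetilde{W}_1$ as an explicit rational function of that of $W_1$, and verifies by hand that $n\big(\bE[e^{-r\widetilde{W}_1/\widetilde{a}_n}]-1\big)\to r^\alpha$. No random time-change, no Anscombe, no functional invariance principle enters.

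Your approach is more probabilistic: you identify $\widetilde{W}_n=W_{K_n}$, use the law of large numbers $K_n/n\to 1/\nu_0$, and transfer the stable limit via randomly-indexed convergence. The delicate point you flag---that $K_n$ is not independent of $W$---is handled, since Anscombe's theorem only requires $K_n/n\to c$ in probability together with the Anscombe (uniform continuity in probability) condition, which holds for i.i.d.\ sums in any stable domain of attraction; your functional-limit alternative is also clean, using that a L\'evy process has no fixed discontinuities so the evaluation map is a.s.\ continuous at the deterministic time $1/\nu_0$. The paper's argument is entirely elementary and self-contained (pure Laplace-exponent algebra), while yours is more conceptual and perhaps more readily adapted to other time-changes, at the cost of invoking heavier external results. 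Both extract the constant $\nu_0^{1/\alpha}$ in the same place, yours via the self-similarity $X_{1/\nu_0}\ed\nu_0^{-1/\alpha}X_1$, the paper's from the substitution $r\mapsto r\nu_0^{1/\alpha}$ in the Laplace exponent.
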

\begin{proof} This is rather elementary: by definition, we can write $\widetilde{W}_1=A_1+\cdots+A_G-1$, where
  $G\sim{\rm geom}(\nu_0)$ is independent of an independent and identically distributed sequence of up-moves
  $A_n$, $n\ge 1$, with $\bP(A_n=j)=\nu_{j+1}/(1-\nu_0)$, $j\ge 0$. Here
  $$\bE\left[\exp\left(-\lambdaLaplace\widetilde{W}_1\right)\right]=e^\lambdaLaplace\frac{\nu_0}{1-(1-\nu_0)\bE[e^{\lambdaLaplace A_1}]}=\frac{e^\lambdaLaplace\nu_0}{1-\left(\bE[e^{-\lambdaLaplace W_1}]-\nu_0e^\lambdaLaplace\right)}$$
  By assumption,
  $$\left(\bE\left[\exp\left(-\frac{\lambdaLaplace}{a_n}W_1\right)\right]\right)^n\longrightarrow\exp(\lambdaLaplace^\alpha)\qquad\mbox{i.e.}\qquad n\left(\bE\left[\exp\left(-\frac{\lambdaLaplace}{a_n}W_1\right)\right]-1\right)\longrightarrow\lambdaLaplace^\alpha.$$
  Hence
  $$n\left(\bE\left[\exp\left(-\frac{\lambdaLaplace}{\widetilde{a}_n}\widetilde{W}_1\right)\right]-1\right)
    =\frac{n\left(1-\bE\left[\exp\left(-\frac{\lambdaLaplace\nu_0^{1/\alpha}}{a_n}W_1\right)\right]\right)}{1-\bE\left[\exp\left(-\frac{\lambdaLaplace\nu_0^{1/\alpha}}{a_n}W_1\right)\right]+\nu_0\exp\left(\frac{\lambdaLaplace\nu_0^{1/\alpha}}{a_n}\right)}\longrightarrow\frac{(\lambdaLaplace\nu_0^{1/\alpha})^\alpha}{\nu_0}=\lambdaLaplace^\alpha.$$
  If $\sigma^2<\infty$, then $a_n=\sigma\sqrt{n/2}$ is the central limit theorem with limiting variance $2$.
\end{proof}
\begin{cor}\label{loclim} Under the assumption {\rm(\ref{regvarass})}, the time-changed process $\widetilde{W}$ satisfies the local limit theorem $$\sup_{k\in\bZ}\left|\widetilde{a}_n\bP(\widetilde{W}_n=k)-p_1\left(\frac{k}{\widetilde{a}_n}\right)\right|\rightarrow 0\qquad\mbox{as $n\rightarrow\infty$.}$$
\end{cor}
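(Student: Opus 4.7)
The plan is to observe that $\widetilde{W}$ is itself a random walk meeting exactly the hypotheses under which the local limit theorem was already invoked for $W$ in the excerpt, so that a direct application of Ibragimov--Linnik / Kortchemski gives the conclusion.

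First, I would verify that $\widetilde{W}$ is an integer-valued random walk. By the strong Markov property of $W$ at each stopping time $K_m$, the evolution of $W$ on $(K_m,K_{m+1}]$ is independent of $\cF_{K_m}$ and distributed as the evolution on $(0,K_1]$. Consequently the increments $\widetilde{W}_n-\widetilde{W}_{n-1}=W_{K_n}-W_{K_{n-1}}$, $n\ge 1$, are i.i.d.\ with the law of $\widetilde{W}_1$, and take values in $\bZ$.

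Second, by Lemma \ref{lem2.1} this common step distribution lies in the domain of attraction of the same stable law of index $\alpha$, with norming $\widetilde{a}_n=a_n/\nu_0^{1/\alpha}$, and the limit density is the same $p_1$. The decomposition $\widetilde{W}_1=A_1+\cdots+A_G-1$ from the proof of Lemma \ref{lem2.1} shows that $\bP(\widetilde{W}_1=-1)=\bP(G=0)=\nu_0>0$ and that the remaining support of $\widetilde{W}_1$ is generated arithmetically from the support of $\nu$; hence any aperiodicity (equivalently span-$1$) assumption implicit in the statement of the local limit theorem for $W$ transfers to $\widetilde{W}_1$.

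Having verified that $\widetilde{W}$ is a random walk whose step distribution is aperiodic and attracted, with the specified norming, to the same stable law as before, I would invoke the same classical local limit theorem (Ibragimov--Linnik Theorem 4.2.1, or Kortchemski Theorem 1.10) that was cited for $W$. Applied to $\widetilde{W}$ with the sequence $\widetilde{a}_n$, this yields exactly
\[
\sup_{k\in\bZ}\left|\widetilde{a}_n\bP(\widetilde{W}_n=k)-p_1\!\left(\frac{k}{\widetilde{a}_n}\right)\right|\longrightarrow 0,
\]
which is the claim. Since the whole argument reduces to identifying $\widetilde{W}$ as a random walk and checking that the hypotheses transfer from $W$ to $\widetilde{W}$, there is essentially no substantive obstacle beyond the aperiodicity bookkeeping; the analytic heavy lifting was already done in Lemma \ref{lem2.1}.
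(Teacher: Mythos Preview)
Your proposal is correct and matches the paper's (implicit) approach: the paper states the corollary without proof, treating it as an immediate consequence of Lemma \ref{lem2.1} together with the same Ibragimov--Linnik/Kortchemski local limit theorem already cited for $W$. You have simply written out the verification that $\widetilde{W}$ is a lattice random walk satisfying the hypotheses of that theorem, which is exactly what is intended.
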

Now denote by $S_j$ respectively $S_j^V$ the random number of leaves respectively vertices in $j$ independent Galton-Watson trees with offspring distribution $\nu$. Following Haas and Miermont \cite{HM10}, we note the classical argument based on the observation that we can think of the steps of $W$ as corresponding to vertices of the trees (e.g. exploring the trees in depth first order) adding each time the number of children minus one so that $W_k$ is the number of unexplored vertices whose parent has been explored minus $j$ while the $j$th tree is being explored. Then
$$S_j^V=n\quad \iff\quad W_n=-j\ \mbox{ and }\ W_m>-j,\ m<n,$$
which via the cyclic lemma (e.g. Feller \cite[Lemma XII.6.1]{Fel2}) for the downward skip-free random walk $W$ yields\vspace{-0.2cm}
$$\bP(S_j^V=n)=\frac{j}{n}\bP(W_n=-j).$$
The following result was noted in \cite[Corollary 1]{Riz13} and has been implicit in Kortchemski \cite{Kor12}.
\begin{prop} We have $\ \displaystyle\bP(S_j=n)=\frac{j}{n}\bP(\widetilde{W}_n=-j)\ $ for all $1\le j\le n$.
\end{prop}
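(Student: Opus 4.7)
\begin{pf}[Proof proposal] The plan is to mimic the classical cyclic-lemma argument (as just recalled for $S_j^V$), but carried out with the time-changed walk $\widetilde{W}$ in place of $W$. In one line: I want to show that $\{S_j=n\}$ coincides with the first-passage event $\{\widetilde{W}_n=-j,\ \widetilde{W}_m>-j\text{ for }m<n\}$, and then invoke Feller \cite[Lemma XII.6.1]{Fel2} for the downward skip-free walk $\widetilde{W}$.

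First I would set up the depth-first exploration of $j$ i.i.d.\ ${\rm GW}(\nu)$-trees concatenated into a forest $\cG_1,\ldots,\cG_j$, and use it to identify the increments of $W$ with $k_{v_k}-1$, where $v_k$ is the $k$-th vertex explored. In this coding, down-steps of $W$ correspond bijectively to leaves, so the time $K_n$ of the $n$-th down-step of $W$ is precisely the exploration time of the $n$-th leaf, and $\widetilde{W}_n=W_{K_n}$. The total number of leaves in the forest is therefore exactly the number of down-steps of $W$ on $\{0,1,\ldots,S_j^V\}$.

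The key step is the equivalence
\[
\{S_j=n\}\;=\;\{\widetilde{W}_n=-j\ \text{ and }\ \widetilde{W}_m>-j\text{ for all }m<n\}.
\]
For the forward direction: if $S_j=n$ then the exploration of the whole forest terminates at some leaf, which, by definition of $S_j^V$, is the $n$-th overall, so $K_n=S_j^V$ and $\widetilde{W}_n=W_{K_n}=-j$; for $m<n$ the walk has not yet finished the last tree, so $W_{K_m}>-j$. For the converse, the crucial ingredient is that $W$ is downward skip-free, hence it can attain a new minimum only at a down-step, i.e.\ at some time $K_m$; so the first passage of $W$ at $-j$ happens at $K_n$, which forces $S_j^V=K_n$ and therefore $S_j=n$.

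Having made this identification, I would note (as was done in the proof of Lemma \ref{lem2.1}) that $\widetilde{W}$ has i.i.d.\ increments with $\widetilde{W}_1\geq -1$ a.s., so $\widetilde{W}$ is itself a downward skip-free integer-valued random walk started at $0$. Applying the cyclic lemma in the form of \cite[Lemma XII.6.1]{Fel2} to $\widetilde{W}$ gives at once
\[
\bP\bigl(\widetilde{W}_n=-j,\ \widetilde{W}_m>-j\ \text{for }m<n\bigr)\;=\;\frac{j}{n}\,\bP(\widetilde{W}_n=-j),
\]
which combined with the equivalence above yields the claim. The only subtle point in the whole argument is the converse direction of the set equality, where one must resist the temptation to work directly with the non-skip-free $\widetilde{W}$ and instead exploit the downward skip-free character of the underlying $W$ to conclude that new minima of $W$ occur only at leaves; everything else is routine.
\end{pf}
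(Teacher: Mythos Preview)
Your proof is correct and follows essentially the same approach as the paper: establish the equivalence $\{S_j=n\}=\{\widetilde{W}_n=-j,\ \widetilde{W}_m>-j\text{ for }m<n\}$ via the depth-first exploration, then apply the cyclic lemma to the downward skip-free walk $\widetilde{W}$. One small inaccuracy in your closing remark: $\widetilde{W}$ is in fact downward skip-free (its increments satisfy $\widetilde{W}_1\ge -1$, as you yourself note), so calling it ``non-skip-free'' is misleading; the paper exploits this directly, while your detour through the minima of $W$ is equally valid but not strictly necessary.
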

\begin{proof} Just note that $\widetilde{W}$ is also downward skip-free since it does not skip any down-moves of $W$. Each step now corresponds to a leaf and $-j$ is first reached when all leaves have been explored so that\vspace{-0.2cm}
$$S_j=n\quad \iff\quad \widetilde{W}_n=-j\mbox{ and }\widetilde{W}_m>-j,m<n,$$
and we conclude via the cyclic lemma for $\widetilde{W}$.
\end{proof}
\begin{cor} We have $\ \displaystyle\sup_{j\ge 1}\left|n\widetilde{a}_n\frac{1}{j}\bP(S_j=n)-p_1\left(\frac{-j}{\widetilde{a}_n}\right)\right|\rightarrow 0\ $ as $n\rightarrow\infty$.
\end{cor}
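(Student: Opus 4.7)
The plan is essentially to combine the two immediately preceding results and observe that the claim reduces to a restriction of the local limit theorem of Corollary \ref{loclim}.

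First I would use the preceding proposition to rewrite
$$n\widetilde{a}_n\frac{1}{j}\bP(S_j=n)=n\widetilde{a}_n\frac{1}{j}\cdot\frac{j}{n}\bP(\widetilde{W}_n=-j)=\widetilde{a}_n\bP(\widetilde{W}_n=-j).$$
Thus the quantity inside the supremum becomes exactly
$$\widetilde{a}_n\bP(\widetilde{W}_n=-j)-p_1\!\left(\frac{-j}{\widetilde{a}_n}\right),$$
which is the term appearing in Corollary \ref{loclim} evaluated at $k=-j$.

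Second, I would note that taking the supremum over $j\ge 1$ corresponds to taking the supremum of this expression over $k\in\{-1,-2,-3,\ldots\}\subset\bZ$. Since this is a subset of $\bZ$, the sup is bounded above by $\sup_{k\in\bZ}|\widetilde{a}_n\bP(\widetilde{W}_n=k)-p_1(k/\widetilde{a}_n)|$, and Corollary \ref{loclim} gives that this latter supremum tends to $0$ as $n\to\infty$. This yields the claim.

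There is no real obstacle: the content of the corollary is just the combination of the cyclic-lemma identity $\bP(S_j=n)=(j/n)\bP(\widetilde{W}_n=-j)$ with the local limit theorem for $\widetilde{W}$. The only work already did happen in Lemma \ref{lem2.1} (to transfer the domain-of-attraction property from $W$ to $\widetilde{W}$) and in the classical local limit theorem for lattice random walks in the stable domain of attraction, which was invoked to obtain Corollary \ref{loclim}.
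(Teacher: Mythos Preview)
Your argument is correct and is precisely the immediate deduction the paper intends (the corollary is stated there without proof). The only detail worth noting is that the proposition is stated for $1\le j\le n$, but for $j>n$ both sides of the identity vanish trivially (each Galton--Watson tree has at least one leaf, and $\widetilde W$ is downward skip-free), so your substitution is valid for all $j\ge 1$.
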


%

Recall that given a planar tree ${\bf t}$ with root $\rho$, we denote by $\zeta({\bf t})$ and $\lambda({\bf t})$ the total number of vertices and leaves of ${\bf t}$, 
respectively. For $v\in V(\ft)$ with $v=v_k\rightarrow v_{k-1}\rightarrow\cdots\rightarrow v_1\rightarrow v_0=\rho$, we say that $v$ has \em generation \em $|v|=k$. 
Denote by $\zeta_k({\bf t})$ and $\lambda_k({\bf t})$ the number of vertices and leaves of ${\bf t}$ at generation $k$. Let ${\bf t}(k)$ be ${\bf t}$ restricted to
generation at most $k$, i.e.
$$
{\bf t}(k)=\{v\in {\bf t}: |v|\leq k\}.
$$
Let ${\cal G}^{(n)}$ be a critical Galton-Watson tree conditioned to have $n$ leaves, with offspring distribution $\nu$, and $\widehat{\cG}^{(n)}$ its modification with 
extra leaves as  defined just before Lemma \ref{edgelemma}.
\begin{lm}\label{bound} If the offspring distribution has finite variance, there exists a constant $C>0$ such that
$$\sup_{n\geq1}\bE\left[ \zeta_k\left(\widehat{{\mathcal G}}^{(n)}\right)\right]\leq 2\sup_{n\geq1}\bE\left[ \zeta_k\left({\mathcal G}^{(n)}\right)\right]\leq C k,\quad k\geq1.$$
\end{lm}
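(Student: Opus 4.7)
\emph{First inequality.} The modification appends $k_v-2$ extra leaf children to every branch point $v$ of $\cG^{(n)}$. Since $\nu_1=0$, every non-leaf has $k_v\ge 2$ children, so $\sum_{v\in{\rm Br}(\cG^{(n)}):|v|=k-1}k_v=\zeta_k(\cG^{(n)})$, and for $k\ge 1$,
$$\zeta_k(\widehat{\cG}^{(n)})=\zeta_k(\cG^{(n)})+\sum_{\substack{v\in{\rm Br}(\cG^{(n)})\\|v|=k-1}}(k_v-2)\le 2\,\zeta_k(\cG^{(n)}).$$
The case $k=0$ is trivial. Taking expectations and the supremum in $n$ gives the first inequality.

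\emph{Many-to-one formula.} For the second inequality I plan to apply the classical size-biased spine decomposition. Summing $\ind_{\{v\in\cG,\,|v|=k\}}$ over vertices $v$ in the Ulam--Harris tree and decomposing on the degrees $j_1,\ldots,j_k$ along the ancestors of $v$, the $j_i-1$ sibling subtrees at each spine level together with the subtree rooted at $v$ form independent ${\rm GW}(\nu)$ trees. Since each $(j_1,\ldots,j_k)$ is realised by exactly $j_1\cdots j_k$ choices of $v$,
$$\bE\bigl[\zeta_k(\cG)\,\ind_{\{\lambda(\cG)=n\}}\bigr]=\sum_{j_1,\ldots,j_k\ge 1}\prod_{i=1}^k j_i\nu_{j_i}\cdot\bP\bigl(S_{j_1+\cdots+j_k-k+1}=n\bigr).$$
Criticality of $\nu$ makes $\pi_j:=j\nu_j$ a probability distribution on $\{1,2,\ldots\}$, so for i.i.d.\ $Y_i\sim\pi$ and $N_k:=Y_1+\cdots+Y_k-k+1$ the right-hand side equals $\bP(S_{N_k}=n)$. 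The finite-variance hypothesis gives $\bE[Y_1]=\sum_j j^2\nu_j=1+\sigma^2$ and hence $\bE[N_k]=k\sigma^2+1$.

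\emph{Local limit estimate and conclusion.} Dividing by $\bP(\lambda(\cG)=n)=\bP(S_1=n)$ and using the cycle-lemma identity $\bP(S_j=n)=(j/n)\bP(\widetilde W_n=-j)$ from Section \ref{sec2.1} yields
$$\bE\bigl[\zeta_k(\cG^{(n)})\bigr]=\bE\!\left[N_k\,\frac{\bP(\widetilde W_n=-N_k)}{\bP(\widetilde W_n=-1)}\right].$$
By the uniform local limit theorem of Corollary \ref{loclim}, $\widetilde a_n\bP(\widetilde W_n=-j)=p_1(-j/\widetilde a_n)+o(1)$ uniformly in $j\ge 1$, and since $p_1$ is bounded on $\bR$ with $p_1(0)>0$ there exist $n_0\ge 1$ and $C_1>0$ with $\bP(\widetilde W_n=-j)\le C_1\bP(\widetilde W_n=-1)$ for all $n\ge n_0$ and $j\ge 1$. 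Hence $\bE[\zeta_k(\cG^{(n)})]\le C_1\bE[N_k]\le C_1(\sigma^2+1)k$ for $n\ge n_0$. The finitely many smaller $n\in\{n:\bP(\lambda(\cG)=n)>0\}$ are handled via the deterministic bound $\zeta(\cG^{(n)})\le 2n-1$ (which follows from $\nu_1=0$ since every branch point has at least two children), giving $\bE[\zeta_k(\cG^{(n)})]\le 2n_0-1$ for $n<n_0$; the overall constant $C$ accommodates both regimes. I expect the sole technical subtlety to be the uniform-in-$j$ control of $\bP(\widetilde W_n=-j)/\bP(\widetilde W_n=-1)$, which is exactly what Corollary \ref{loclim} provides; the remaining ingredients (spine decomposition, cycle lemma, criticality) are standard.
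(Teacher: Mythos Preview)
Your argument is correct and considerably more direct than the paper's. The paper adapts Janson's method: it bounds the Radon--Nikodym derivative of $\cG^{(n)}(k)$ against the Kesten tree $\cG^\infty(k)$ via $\bP(S_m=N)$, then splits into three regimes (few leaves below level $k$, many leaves below level $k$ with moderate $\zeta_k$, large $\zeta_k$) and invokes the bound $\bE[\zeta_k(\cG^\infty)]\le Ck$ from \cite{Janson06}. Your route bypasses this case analysis entirely: the spine decomposition collapses $\bE[\zeta_k(\cG)\ind_{\{\lambda(\cG)=n\}}]$ to $\bE[\bP(S_{N_k}=n)]$ with an explicit size-biased walk $N_k$, after which the cycle lemma and the uniform local limit theorem (Corollary~\ref{loclim}) give the ratio bound in one stroke. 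The key observation you exploit---that $p_1$ is bounded with $p_1(0)>0$, so $\sup_j\bP(\widetilde W_n=-j)/\bP(\widetilde W_n=-1)$ is bounded for large $n$---is exactly what makes the shortcut work. The paper's approach is more robust in that Janson's template extends to settings without such a clean closed form, but for this specific lemma your argument is both shorter and more transparent.
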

\begin{proof}
We adapt Janson's idea of proving \cite[Theorem 1.13]{Janson06}. Our proof will be divided into four subparts. We use $c,C, C_1, C_2, \ldots$ for constants independent of
$n$ and $k$.

\medskip

\noindent {\it Subpart 1.}
 Let $\cG$ be a Galton-Watson tree and ${\cal G}^{\infty}$ the so-called Kesten tree arising as local limit of ${\cal G}^{(n)}$ as $n\rightarrow\infty$; see Abraham and Delmas \cite{AD14}. It is well-known \cite[(1.15)]{Kes86} that for any tree $\bf t$\vspace{-0.2cm}
$$
\bP({\cal G}(k)= \ft(k))=\zeta_k(\ft)\bP({\cal G}^{\infty}(k)=\ft(k)).
$$
Let $\ft$ be a tree with $\zeta_k(\ft)=m$. Define $N=n-\sum_{i\leq k-1}\lambda_i(\ft(k)).$ Then
by conditioning on generation $k$ and using Kortchemski \cite[Theorem 3.1]{Kor12} and Proposition 2.3, we obtain
\begin{eqnarray}
\bP({\cal G}^{(n)}(k)= \ft(k))
&=& \frac{\bP({\cal G}(k)=\ft(k),\lambda({\cal G})=n)}{\bP(\lambda({\cal G})=n)}\nonumber\\
&\leq & C_1n^{3/2} \bP({\cal G}(k)=\ft(k))\bP\left(S_m=N\right)\nonumber\\
&=&C_1n^{3/2} \bP({\cal G}(k)=\ft(k))\frac{m}{N}\bP\left(\widetilde{W}_N=-m\right)\nonumber\\
&\leq&C_2m\left(\frac{n}{N}\right)^{3/2} e^{-cm^2/N}\bP({\cal G}(k)=\ft(k))\nonumber\\
&=&C_2\left(\frac{n}{N}\right)^{3/2} e^{-cm^2/N}\bP({\cal G}^{\infty}(k)=\ft(k)),\label{RNderivative}
\end{eqnarray}
where in the second inequality we use  Lemma  2.1 above and \cite[Lemma 2.1]{Janson06}.

\medskip
The argument in Subparts 2.--\,4. is very similar to the proof of \cite[Theorem 1.13]{Janson06} with only slight modifications.

\medskip

\noindent{\it Subpart 2.}
For each $k\geq1$, define
$$
\Gamma_k=\left\{\sum_{i\le k-1}\lambda_i({\cal G}^{(n)}(k))\leq n/2 \right\}\qquad\mbox{and}\qquad
{\zeta}_k^*({\cal G}^{(n)}(k))={\zeta}_k({\cal G}^{(n)}(k))1_{\Gamma_k}.
$$
By (\ref{RNderivative}),  for any tree $\ft$ with $\sum_{i\le k-1}\lambda_i(\ft(k))\leq n/2$ and $ \zeta_k(\ft)>0$,
we have $$\bP({\cal G}^{(n)}(k)= \ft(k))
\leq C_3\bP({\cal G}^{\infty}(k)=\ft(k)),$$
which implies
$$
\bP({\zeta}_k^*({\cal G}^{(n)})=i)
\leq C_4\bP(\zeta_k({\cal G}^{\infty})=i),\qquad \mbox{for all }i\geq1.
$$
Thus
\begin{eqnarray}\label{bounda}
\bE[ {\zeta}_k^*({\cal G}^{(n)})]=\bE[ \zeta_k({\cal G}^{(n)})1_{\Gamma_k}]\leq C_4 \bE[ \zeta_k({\cal G}^{\infty})]\leq C_5k,
\end{eqnarray}
where the last inequality follows from \cite[Lemma 2.3]{Janson06}.

\medskip

\noindent{\it Subpart 3.} On $\Gamma_k^c$, one can find a (random) integer $L\leq k$ such that
$$
\sum_{i=1}^{L-1}{\lambda}_i({\cal G}^{(n)})\leq n/2\qquad\mbox{and}\qquad \sum_{i=1}^{L}{\lambda}_i({\cal G}^{(n)})> n/2.
$$
Thus on $\Gamma_k^c$,
$$
\sum_{i=0}^{k}{\zeta}_i^*({\cal G}^{(n)})=\sum_{i=0}^{L}{\zeta}_i^*({\cal G}^{(n)})=\sum_{i=0}^{L}{\zeta}_i({\cal G}^{(n)})>\sum_{i=0}^{L}{\lambda}_i({\cal G}^{(n)})> n/2.
$$
By the Markov inequality and (\ref{bounda}),
$$
\bP(\Gamma_k^c)\leq  \frac{2}{n}\bE\left[\sum_{i=0}^{k}{\zeta}_i^*({\cal G}^{(n)}) \right]\leq \frac{C_6k^2}{n}.
$$
Hence, we obtain
\begin{eqnarray}\label{boundb}
\bE\left[ \zeta_k({\cal G}^{(n)})1_{\Gamma_k^c} 1_{\{ \zeta_k({\cal G}^{(n)})\leq\sqrt{n}\}}\right]\leq \sqrt{n}\bP(\Gamma_k^c)\leq \sqrt{n \bP(\Gamma_k^c)} \leq \sqrt{C_6}k.
\end{eqnarray}

\medskip

\noindent{\it Subpart 4.}  For any $\ft$ with $\zeta_k(\ft)\geq \sqrt{n}$, according to (\ref{RNderivative}), we have
$$\bP({\cal G}^{(n)}(k)= \ft(k))
\leq  C_7\left(\frac{n}{N}\right)^{3/2} e^{-cn/N}\bP({\cal G}^{\infty}(k)=\ft(k))\leq C_8 \bP({\cal G}^{\infty}(k)=\ft(k)),$$
which, by reasoning similar as for (\ref{bounda}),  yields
\begin{eqnarray}\label{boundc}
\bE[ \zeta_k({\cal G}^{(n)})1_{\{\zeta_k({\cal G}^{(n)})> \sqrt{n}\}}]\leq C_2 \bE[ \zeta_k({\cal G}^{\infty})]\leq C_9k.
\end{eqnarray}
Then the desired result follows from (\ref{bounda}), (\ref{boundb}) and (\ref{boundc}). We have completed the proof.\end{proof}

\subsection{GH, GP and GHP topologies}\label{secGHP}

According to \cite{EPW06,EW06,GPW,Mie09} and references therein, we can define a
Gromov-Hausdorff-Prokhorov (Gromov-Hausdorff or Gromov-Prokhorov) distance on the set of measure-preserving isometry classes of pointed measured compact metric
spaces to turn the set (of equivalence classes modulo measure or modulo restriction to the support of the measure) into a Polish space.

Specifically, let $(Z,d^Z)$ be a metric space. For Borel sets $A,B\subseteq Z$, set
$$
 d_\text{H}^Z(A,B)= \inf \{ \varepsilon >0\colon A\subseteq B^\varepsilon\
 \mathrm{and}\ B\subseteq
A^\varepsilon \},
$$
the Hausdorff distance between $A$ and $B$, where $A^\varepsilon = \{ x\in Z\colon
\inf_{y\in A} d^Z(x,y) \le \varepsilon\}$.
Let $M_{f}(Z)$ be the set of all Borel probability measures  on $(Z,d^Z)$.
For $\mu,\mu^\prime \in M_f(Z)$, we define
$$
 d_\text{P}^Z(\mu,\mu^\prime) = \inf \{ \varepsilon >0\colon \mu(A)\le \mu^\prime(A^\varepsilon) +
 \varepsilon
\text{ and }
\mu^\prime(A)\le \mu(A^\varepsilon)+\varepsilon\
\text{ for all closed } A\subseteq Z \},
$$
the Prokhorov distance between $\mu$ and $\mu^\prime$.

A pointed measured  metric space ${\rm T} = (T, d,  \rho,\mu)$ is a
metric space $(T, d)$ with a distinguished element $\rho\in T$
and  a Borel probability measure  $\mu$ on $(T,d)$.
For two compact pointed measured metric spaces ${\rm T}=(T,d,\rho,\mu)$ and ${\rm T}'=(T',d',\rho',\mu')$, the Gromov-Hausdorff-Prokhorov  distance is\vspace{-0.2cm}
$$
 d_{\text{GHP}}({\rm T},{\rm T}') = \inf_{\Phi,\Phi',Z} \left(
 d_\text{H}^Z(\Phi(T),\Phi'(T')) +
d^Z(\Phi(\rho),\Phi'(\rho')) + d_\text{P}^Z(\Phi_* \mu,\Phi_*'
\mu') \right),\vspace{-0.1cm}
$$
where the infimum is taken over all isometric embeddings $\Phi\colon T\hookrightarrow
Z$ and $\Phi'\colon T'\hookrightarrow Z$ into some common Polish metric space
$(Z,d^Z)$ and $\Phi_* \mu$ is the measure $\mu$ transported by $\Phi$.
Similarly, we define Gromov-Hausdorff and Gromov-Prokhorov distances, respectively, as
$$
 d_{\text{GH}}({\rm T},{\rm T}') = \inf_{\Phi,\Phi',Z} \left(
 d_\text{H}^Z(\Phi(T),\Phi'(T')) +
d^Z(\Phi(\rho),\Phi'(\rho'))  \right),
$$
$$
 d_{\text{GP}}({\rm T},{\rm T}') = \inf_{\Phi,\Phi',Z} \left(
d^Z(\Phi(\rho),\Phi'(\rho')) + d_\text{P}^Z(\Phi_* \mu,\Phi_*'
\mu') \right).
$$
A compact metric space $(T,d)$ is called a real tree if for any two $x,y\in T$, there is an isometry $f_{x,y}\colon[0,d(x,y)]\rightarrow T$ with $f_{x,y}(0)=x$ and 
$f_{x,y}(d(x,y))=y$, and if for all injective $g\colon[0,1]\rightarrow T$ with $g(0)=x$ and $g(1)=y$ we have $g([0,1])=f_{x,y}([0,d(x,y)])$. Every real tree is naturally
equipped with a (sigma-finite) length measure $\ell$, for which $\ell(f_{x,y}([0,d(x,y)]))=d(x,y)$, $x,y\in T$. We refer to a pointed real tree $(T,d,\rho)$ as a \em rooted \em real tree, to points $x\in T\setminus\{\rho\}$ for which $T\setminus\{x\}$ is connected, respectively, disconnected into three or more connected components, 
as \em leaves\em, respectively \em branch points\em.

For any rooted real tree $(T,d,\rho)$, we define the height ${\rm ht}(T)=\max\{d(\rho,x),x\in T\}$. For any $x\in T$, we define the subtree 
$T_x=\{y\in T\colon x\in f_{\rho,y}([0,d(\rho,y)])\}$ above $x$. For $\varepsilon>0$, we define Neveu's \cite{Nev86b} notion of $\varepsilon$-erasure of $T$ as $R_\varepsilon(T)=\{\rho\}\cup\{x\in T\colon{\rm ht}(T_x)\ge\varepsilon\}$. Then $R_\varepsilon(T)$ is a rooted real tree with finitely many leaves and branch points; see also \cite{EPW06,NeP89b,NeP89a}. 

Examples of pointed measured compact real trees are obtained from continuous functions $h\colon[0,1]\rightarrow[0,\infty)$. For $s,t\in[0,1]$, let 
$d_h(s,t)=h(t)+h(s)-2\inf\{h(r),\min(s,t)\!\le\! r\!\le\!\max(s,t)\}$ and $s\sim_h t$ iff $d_h(s,t)=0$. Then the quotient space $T_h=[0,1]/\sim_h$ is a compact real tree when equipped 
with the quotient metric, again denoted by $d_h$. We further equip $(T_h,d_h)$ with the root $\rho_h=[0]_{\sim_h}$ and the measure $\mu_h$ obtained as the  
push-forward of Lebesgue measure on $[0,1]$ under the quotient map. The function $h$ is called the \em height function \em of $(T_h,d_h,\rho_h,\mu_h)$.

\subsection{Bertoin and Miermont's Brownian cut-tree}\label{secbrcut}

A Brownian Continuum Random Tree (CRT) is a random pointed measured compact metric space introduced by Aldous \cite{Ald91}. One construction is to take $h=2B^{\rm ex}$ 
as height function, for a normalised excursion $B^{\rm ex}$ of linear Brownian motion. 

Let $({\cal T}_{\rm Br},\mu_{\rm Br})$ be a Brownian CRT. Conditionally on ${\cal T}_{\rm Br}$, let $\sum_{i\in I}\delta_{(t_i, x_i)}(dt, dx)$ be a Poisson point measure on $[0, \infty)\times {\cal T}_{\rm Br}$ with intensity $dt\times d{\ell}_{\rm Br}$, where $\ell_{\rm Br}$ is the length measure on ${\cal T}_{\rm Br}$. Denote by ${\cal T}_{\rm Br}(t)$ the ``forest" obtained by removing points $\{x_i\colon i\in I, t_i\leq t\}$ that are marked before $t$. For any $x\in {\cal T}_{\rm Br}$,  let ${\cal T}_{\rm Br}(x, t)$ be the connected component of ${\cal T}_{\rm Br}(t)$ that contains $x$ with the convention that ${\cal T}_{\rm Br}(x, t)=\emptyset$ if $x\notin {\cal T}_{\rm Br}(t)$. Define ${\mu}_{\rm Br}(x, t)=\mu_{\rm Br}({\cal T}_{\rm Br}(x, t) )$.  We further define a function $\delta $ from $({\cal T}_{\rm Br}\cup \{0\})^2$ into $[0, +\infty]$ such that $\delta(0,0)=0$ and 
$$\delta(0, x)=\delta(x, 0)=\int_0^{\infty} {\mu}_{\rm Br}(x, t) dt\qquad\mbox{and}\qquad
\delta(x, y)=\int_{t(x, y)}^{\infty}\left( {\mu}_{\rm Br}(x, t)+{\mu}_{\rm Br}(y, t)\right)dt,$$
where $t(x, y)=\inf\{t\geq0\colon {\cal T}_{\rm Br}(x, t)\neq{\cal T}_{\rm Br}(y, t)\}.$

Let $\xi_0=0$ and $(\xi_i, i\in \bN)$ be an i.i.d. sequence distributed as $\mu_{\rm Br}$. For all $k\ge 1$, let ${\cal R}_k$ be the random real tree spanned by $\{\xi_0, \xi_1,\ldots, \xi_k\}$ and $\delta$.  Then $\text{cut}({\cal T}_{\rm Br})$ is defined as 
$$
\text{cut}({\cal T}_{\rm Br})=\overline{\bigcup_{k\geq 1}{\cal R}_k}\,,
$$
the completion of the metric space $(\bigcup_{k\geq1}{\cal R}_k, \delta)$. Then $({\rm cut}(\cT_{\rm Br}),\delta,0)$, equipped 
with the limiting empirical measure of $(\xi_i,i\in\bN)$, is again a Brownian CRT; see Bertoin and Miermont \cite{BM}.

\subsection{Deduction of Theorem \ref{thm1} from the statements of the three steps.}\label{secpfthm}

Since the proof of Theorem \ref{thm1} requires Proposition \ref{propintro}, we prove the proposition first.

\begin{proof}[Proof of Proposition \ref{propintro}] First consider $H=2B$ for a Brownian motion $B$. For $\varepsilon>0$, we follow 
  \cite[Section 7.6]{csp} and define alternating up- and down-crossing times as $D_0^{(\varepsilon)}=0$ and, for $m\ge 0$, 
  \begin{eqnarray*}U_{m+1}^{(\varepsilon)}&=&\inf\{t\ge D_m^{(\varepsilon)}\colon H(t)-\min\{H(s),D_m^{(\varepsilon)}\le s\le t\}=\varepsilon\},\\
                D_{m+1}^{(\varepsilon)}&=&\inf\{t\ge U_{m+1}^{(\varepsilon)}\colon H(t)-\max\{H(s),U_{m+1}^{(\varepsilon)}\le s\le t\}=-\varepsilon\}.
  \end{eqnarray*}
  Then $D_m^{(\varepsilon)}$ is precisely $\varepsilon$ below a previous local maximum of $H$ for all $m\ge 1$. Let 
  $X_m^{(\varepsilon)}=H(D_m^{(\varepsilon)})$ and $Y_m^{(\varepsilon)}=\min\{H(s)\colon D_{m}^{(\varepsilon)}\le s\le D_{m+1}^{(\varepsilon)}\}$, $m\ge 0$. 
  
  The excursions above the minimum of $H$ are scaled copies of $2B^{\rm ex}$ and hence encode scaled Brownian CRTs. The subtrees spanned by 
  $D_m^{(\varepsilon)}$, $m\ge 1$, are $\varepsilon$-erasures of the Brownian CRTs with leaves at heights $X_m^{(\varepsilon)}-\min\{Y_k^{(\varepsilon)},0\le k\le m-1\}$,
  $m\ge 1$, and roots and branch points at heights $Y_m^{(\varepsilon)}$, $m\ge 0$. Consider the function $H^{(\varepsilon)}$, which is piecewise linear at alternating
  slopes of $\pm 2/\varepsilon$ interpolating the alternating walk $X_0^{(\varepsilon)},Y_0^{(\varepsilon)},X_1^{(\varepsilon)},Y_1^{(\varepsilon)},\ldots$. By 
  \cite[Corollary 7.17]{csp}, we have $H^{(\varepsilon)}\rightarrow H$ locally uniformly and almost surely, as $\varepsilon\downarrow 0$.

  Our aim is to deduce that the $\varepsilon$-erasure $R_\varepsilon(\cT_{\rm Br})$ equipped with a scaled length measure 
  $\mu_\varepsilon=\varepsilon\ell_{\rm Br}|_{R_\varepsilon(\cT_{\rm Br})}$ converges to $(\cT_{\rm Br},\mu_{\rm Br})$ in GHP.

  The convergence $H^{(\varepsilon)}\rightarrow H$ includes the height function of the first excursion of height greater than $r>0$, jointly with the excursion length,
  so that convergence holds under the Brownian It\^o excursion measure $n_{\rm Br}$ conditioned on excursions of height greater than $r$, for all $r>0$. See
  \cite[Chapter XII]{RY}. 
  By disintegration of $n_{\rm Br}$ (e.g. \cite[Theorem 22.15]{kal}), this convergence also holds under the distribution of $2B^{\rm ex}$, which is
  the normalised excursion measure $n_{\rm Br}(\,\cdot\;|\,\zeta=1)$, where $\zeta(h)=\inf\{t\ge 0\colon h(t)=0\}$, for continuous $h\colon[0,1]\rightarrow[0,\infty)$.
  
  $h^{(\varepsilon)}$ pushes forward Lebesgue measure onto $\varepsilon$ times the length measure of $R_\varepsilon(T_h)$.
  Uniform convergence jointly with excursion lengths implies GHP convergence of encoded trees equipped with the push-forward of Lebesgue measure (see e.g. \cite{ADHo}). 
  This completes the proof.
\end{proof}

We noted in the introduction that while Proposition \ref{propintro} will also hold for stable trees, the argument will be more involved and beyond the scope of this
paper, since we focus on the Brownian case here. While $\varepsilon$-erasure of stable trees has been studied in \cite{DuWi12}, this paper does not construct the
mass measure from the length measure. \cite{DuLG} study height functions, but ``Poisson sampling'' instead of $\varepsilon$-erasure. For Poisson sampling, their results yield the
analogous almost sure and locally uniform convergence of contour functions. While \cite{DuWi12} have shown that $\varepsilon$-erasure and Poisson sampling yield the same marginal
distribution, the joint distributions are not the same, and hence we only obtain convergence in distribution. But this is not good enough here. To study $\varepsilon$-erasure
directly and get almost sure convergence in GHP back to the stable tree, \cite{DuWi13} may help, where a reconstruction procedure demonstrates how subtrees (which contain
all the mass) are attached to the $\varepsilon$-erased tree in order to get the stable tree back.

\begin{proof}[Proof of Theorem \ref{thm1}] From the three steps listed in the introduction (and completed in Section \ref{secmain}), we have marginal convergence in GH or GHP for each of the four components of $(\cG^{(n)}/c_n,\widehat{\cG}^{(n)}/c_n,{\rm cut}_{\rm HW}(\cG^{(n)}/c_n^\prime,{\rm cut}_{\rm D}(\widehat{\cG}^{(n)}/c_n^\prime))$. As GH-tightness implies GHP-tightness (see Miermont
  \cite[Proposition 8]{Mie09}), the joint laws are GHP$^4$-tight. Take any subsequence along which we have convergence in distribution in GHP$^4$. By
  Skorokhod's representation theorem, we may assume that convergence holds almost surely, to a vector $((\cT_1,\mu_1),\ldots,(\cT_4,\mu_4))$ of measured limiting trees.
  
  As GHP$^2$-convergence implies GP$^2$-convergence, we get from Step 3.
  $((\cT_2,\mu_2),(\cT_4,\mu_4))\sim((\cT_{\rm Br},\mu_{\rm Br}),({\rm cut}(\cT_{\rm Br}),\mu_{\rm cut}))$, by uniqueness of GP$^2$-limits. By Step 2., we obtain
  $(\cT_1,\mu_1)=(\cT_2,\mu_2)$ a.s.. By Step 1., we obtain $\cT_3=\cT_4$ a.s.. Finally, $(\cT_{\rm Br},\mu_{\rm Br})$ is a measurable function of $\cT_{\rm Br}$, by Proposition \ref{propintro}, and therefore, both $(\cT_3,\mu_3)$ by GP convergence in Step 3. and $(\cT_4,\mu_4)$ by GHP convergence in Step 1. are this measureable function of
  $\cT_3=\cT_4$ a.s.. This completely specifies the joint distribution of $((\cT_1,\mu_1),\ldots,(\cT_4,\mu_4))$, which furthermore does not depend on the chosen subsequence.
  Therefore, joint convergence in distribution holds with the limiting distribution thus identified.
\end{proof}

\section{Proof of the statements of the three steps }\label{secmain}

\subsection{Step 1: GHP convergence of vertex cut-trees as Markov branching trees}\label{sec3.1}

\begin{proof}[Proof of Proposition \ref{propspl}] Denote by $\bT$ the set of (combinatorial) rooted planar trees. Let $\cG$ be a $\bT$-valued Galton-Watson tree,
  and denote by $X=\lambda(\cG)$ the number of leaves of $\cG$. First
  note that for all trees $\ft \in\bT$ with $n$ leaves and root $\rho$, we have
  $$\bP(\cG=\ft\,|\,\lambda(\cG)=n)=\bP(\cG=\ft\,|\,X=n)=\frac{\bP(\cG=\ft )}{\bP(X=n)}=\frac{1}{\bP(X=n)}\prod_{v\in V(\ft)}\nu_{k_v(\ft )},$$
  where $V(\ft )$ denotes the set of vertices of $\ft $ and $k_v(\ft)$ the degree (number of subtrees of vertex $v\in V(\ft)$, not counting the component containing $\rho$).
  For any branch point $v\in{\rm Br}(\ft)$, splitting $\ft$
  into $\ft_1,\ldots,\ft_{k+1}$ by removing the edges $w\rightarrow v$ for all children $w$ of $v$,
  where $\ft_1$ is the component containing $\rho$ and $v$, and $\ft_2,\ldots,\ft_{k+1}$ are the components of each of the children of $v$, in planar order, we obtain
  $$\prod_{j=1}^{k+1}\bP(\cG=\ft_j)=\frac{\nu_0}{\nu_k}\bP(\cG=\ft ).$$
  Note that if we also record the new leaf $v\in{\rm Lf}(\ft_1)$, we can uniquely reconstruct $(\ft ,v)$ from $(\ft_1,\ldots,\ft_{k+1},v)$. Hence, the probability
  that the first cut is at a branch point with $k$ children is
  \begin{eqnarray*}q_{\overline{n}}(\#{\rm blocks}=\overline{k})&:=&\sum_{\ft \in\bT\colon\lambda(\ft )=n}\,\sum_{v\in{\rm Br}(\ft )\colon k_v(\ft )=k}\bP(\cG=\ft\,|\,\lambda(\cG)=n)\ \frac{k-1}{n-1}\\
    &=&\sum_{\genfrac{}{}{0pt}{}{\ft_1,\ldots,\ft_{k+1}\in\bT\colon}{\lambda(\ft_1)+\cdots+\lambda(\ft_{k+1})=n+1}}\sum_{v\in{\rm Lf}(\ft _1)}\frac{1}{\bP(X=n)}\ \ \frac{k-1}{n-1}\ \ \frac{\nu_k}{\nu_0}\ \prod_{j=1}^{k+1}\bP(\cG=\ft_j).
  \end{eqnarray*}
  By symmetry, this value is exactly the same if the second sum is taken over $v\in{\rm Lf}(\ft_i)$ for any $i=1,\ldots,k+1$. Hence, summing over $i$ and dividing by $k+1$, the second sum captures all $n+1$ leaves leaving the first sum to sum over all $(k+1)$-tuples of trees with total $n+1$ leaves, so that
  $$q_{\overline{n}}(\#{\rm blocks}=\overline{k})=\frac{(n+1)(k-1)\nu_k\bP(X_1+\cdots+X_{k+1}=n+1)}{(k+1)\bP(X=n)(n-1)\nu_0},$$
  for independent $X_j$, $1\le j\le k+1$, with the same distribution as $X=\lambda(\cG)$, as required. The joint distribution of the $k+1$ non-trivial and $k-2$ trivial components follows by a refinement of the above argument: denote by $\cS_1$ the root
  component and by $\cS_2,\ldots,\cS_{k+1}$ the subtrees, then the argument yields a probability to see a Galton-Watson tree $\cG^{(n)}$ with $n$ leaves split into $\cS_1=\fs_1$
  and $\cS_2=\fs_2,\ldots,\cS_{k+1}=\fs_{k+1}$ of
  $$\frac{1}{\bP(X=n)}\frac{k-1}{n-1}\frac{\nu_k}{\nu_0}\lambda(\fs_1)\bP(\cG=\fs_1)\prod_{j=2}^{k+1}\bP(\cG=\fs_j),$$
  and a simple combinatorial argument to handle equal block sizes yields the probability that the ranked split of $n+1$ is $(\lambda(\cS_1),\ldots,\lambda(\cS_{k+1}))^\downarrow=(m_1,\ldots,m_{k+1})$ as
  $$\frac{1}{\bP(X=n)}\frac{k-1}{n-1}\frac{\nu_k}{\nu_0}\frac{(n+1)k!}{\prod_{1\le\ell\le n}r_\ell!}\prod_{j=1}^{k+1}\bP(X=m_j),$$
  where $r_\ell=\#\{1\le j\le k+1\colon m_j=\ell\}$ is the number of block sizes equal to $\ell$. Hence, the conditional probability to see a split into
  $\cS_1=\fs_1,\ldots,\cS_{k+1}=\fs_{k+1}$ given a ranked split of $(m_1,\ldots,m_{k+1})$ is
  $$\frac{\lambda(\fs_1)}{n+1}\frac{\prod_{1\le\ell\le n}r_\ell!}{k!}\prod_{j=1}^{k+1}\bP(\cG=\fs_j\,|\,X=\lambda(\fs_j)).$$
  The Markov branching property follows if we can show that conditionally given the ranked split $(m_1,\ldots,m_{k+1})$, the multiset of trees $\{\!\{\cS_1,\ldots,\cS_{k+1}\}\!\}$
  has the same distribution as the multiset of $k+1$ independent trees with respective distribution $\bP(\cG=\cdot\,|\,X=m_j)$, $1\le j\le k+1$. First suppose that the trees
  $\ft_1,\ldots,\ft_{k+1}$ are distinct. Then the probability that the multiset of trees $\{\!\{\cS_1,\ldots,\cS_{k+1}\}\!\}$ equals $\{\ft_1,\ldots,\ft_{k+1}\}$ is the sum over all
  $\fs_1,\ldots,\fs_{k+1}$ that are permutations of $\ft_1,\ldots,\ft_{k+1}$. In particular, $\fs_1$ can be any $\ft_i$, giving different factors $\lambda(\ft_i)$, and there are $k!$
  equally likely ways to match the others:
  $$k!\sum_{i=1}^{k+1}\frac{\lambda(\ft_i)}{n+1}\frac{\prod_{1\le\ell\le n}r_\ell!}{k!}\prod_{j=1}^{k+1}\bP(\cG=\fs_j\,|\,X=\lambda(\fs_j))=\prod_{1\le\ell\le n}r_\ell!\prod_{j=1}^{k+1}\bP(\cG=\fs_j\,|\,X=\lambda(\fs_j)).$$
  When some of the trees $\ft_1,\ldots,\ft_{k+1}$ are equal, there is duplication in some of the matchings of $\fs_1,\ldots,\fs_{k+1}$ and $\ft_1,\ldots,\ft_{k+1}$, and we lose some factors from $\prod_{1\le\ell\le n}r_\ell!$.
  In each case, we get the probability that the multiset of independent conditioned Galton-Watson trees equals the multiset of $\ft_1,\ldots,\ft_{k+1}$, as required.
\end{proof}

\begin{prop}\label{cutmb} Suppose $\alpha=2$ and the offspring variance $\sigma^2$ is finite. Let $(T_n,n\ge 1)$ be a family of Markov branching trees with splitting rule as given in Proposition \ref{propspl}, so that $T_n$ is the genealogical tree of a fragmentation process starting from an initial block of size $\overline{n}=2n-1$,
equipped with the uniform measure on the $\overline{n}$ leaves of $T_n$. Then
  $$\frac{T_n}{\sqrt{n}}\rightarrow
                                    \frac{\sqrt{\nu_0}}{\sigma}\cT_{\rm Br},\qquad\mbox{in distribution in GHP},$$
where $\cT_{\rm Br}$ is a Brownian Continuum Random Tree equipped with its usual mass measure.
\end{prop}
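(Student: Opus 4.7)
The plan is to apply the Markov branching scaling limit theorem of Haas and Miermont \cite{HM10}, which delivers GHP convergence of rescaled Markov branching trees to self-similar fragmentation CRTs subject to a convergence hypothesis on the splitting rule. By Proposition \ref{propspl}, $T_n$ is precisely a Markov branching tree with the explicit splitting probabilities $q_{\overline{n}}$, so the task reduces to verifying the hypotheses of \cite{HM10} and identifying the limit as a scaled Brownian CRT.

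To extract the asymptotic dislocation measure, I would use Corollary \ref{loclim}. The local limit theorem gives, for fixed $k$, $\bP(S_{k+1}=n+1)/\bP(S_1=n)\to k+1$, so that $q_{\overline{n}}(\#{\rm blocks}=\overline{k})\to(k-1)\nu_k/\nu_0$; by the criticality relation $\sum_k k\nu_k=1$ together with $\nu_1=0$, these limiting constants sum to $1$. Conditionally on a $k$-ary split, the $n^{-3/2}$ tail for the leaf count $\lambda(\cG)$ (itself a direct consequence of Corollary \ref{loclim}) yields a density on the rescaled simplex asymptotically proportional to $\prod_j s_j^{-3/2}$, which is non-integrable near each boundary face $\{s_j=0\}$. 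Hence, after the Haas--Miermont rescaling by $n^{1/2}$, mass accumulates on the binary boundary configurations and reproduces the Brownian binary dislocation density $(s(1-s))^{-3/2}$. The self-similarity index $1/2$ is dictated by $\widetilde{a}_n\asymp\sqrt{n}$ in Lemma \ref{lem2.1}, and the constant $\sqrt{\nu_0}/\sigma$ is read off from the exact form $\widetilde{a}_n=\sigma\sqrt{n/(2\nu_0)}$ in comparison with Bertoin and Miermont's normalisation of $\cT_{\rm Br}$.

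The tightness/moment hypothesis required by \cite{HM10} for the splitting measures can be verified using the uniform local limit theorem in Corollary \ref{loclim} together with the bound of Lemma \ref{bound} on expected generation sizes. The main obstacle is the precise bookkeeping of the singular boundary contributions of $\prod_j s_j^{-3/2}$ uniformly across all block counts $k\ge 2$: one must show that only the binary boundary contributions survive after rescaling, and that the $k-2$ appended singleton blocks, together with the combinatorial factor $(k-1)/(k+1)$ and the size-biasing inherent in selecting a branch point with probability proportional to $k-1$, conspire to match exactly the Brownian dislocation measure with the correct multiplicative constant. A related delicate point is verifying that the Haas--Miermont framework remains applicable in the presence of the appended singletons, so that GHP (and not just GP) convergence truly follows, which is the key strategic advantage over the approaches of \cite{BM,Die13}.
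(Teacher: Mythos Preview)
Your strategy matches the paper's: verify the Haas--Miermont criterion \cite[Theorem 1]{HM10} for the splitting rule of Proposition \ref{propspl} via the local limit theorem of Corollary \ref{loclim}, and read off the Brownian dislocation measure with the right constant. Two points where your plan needs correction:

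\emph{Lemma \ref{bound} plays no role here.} That lemma bounds expected generation sizes of $\cG^{(n)}$ and $\widehat{\cG}^{(n)}$; it is used only in Step 3 (for Lemma \ref{lm4.5}) to control the modified cut-tree distances. The proof of Proposition \ref{cutmb} uses nothing beyond Proposition \ref{propspl} and repeated applications of the local limit theorem.

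\emph{The appended singletons pose no applicability issue.} Once Proposition \ref{propspl} establishes that $\modcut(\cG^{(n)})$ is a bona fide Markov branching tree on $\overline{n}=2n-1$ leaves with an explicit splitting rule, \cite{HM10} applies without further ado. The real work, which you correctly flag as the main obstacle, is the singularity bookkeeping---but the paper handles it not via a separate tightness or moment estimate. It is absorbed by the structure of the Haas--Miermont test functional itself: one must show convergence of $\sqrt{n}\,\overline{q}_{\overline{n}}^*(g)$ for $g(x)=(1-\max x)f(x)$, evaluated on the \emph{size-biased rearrangement} of the block sizes. The size-biasing contributes factors $\sim(k+1)\overline{m}_1/\overline{n}$ and $\sim k\overline{m}_2/(\overline{n}-\overline{m}_1)$ for the first two picks; combined with the local-limit estimates $\bP(X_i=m_i)\sim Cm_i^{-3/2}$ and $\bP(\tau_{k+1}=n+1)\sim C'(k+1)n^{-3/2}$, these collapse the $\prod_j s_j^{-3/2}$ singularity to the integrable binary density $x^{-1/2}(1-x)^{-3/2}$. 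The sum over $k$ then contributes $\sum_k(k-1)k\nu_k=\sigma^2$, which is exactly what fixes the normalising constant $\sqrt{\nu_0}/\sigma$. This size-biased computation is the heart of the argument and is what you would need to carry out explicitly.
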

\begin{proof} Like Rizzolo \cite{Riz13} who applied the arguments of \cite[Section 5.1]{HM10} for his results on trees with numbers of vertices in a given set of degrees, we only present the part of the argument that differs from their's in some details and thereby reveals the constants in the limiting expression. Let $\ell_1([0,\infty))$ be the space of nonnegative summable sequences with sum bounded by 1 equipped with the $\ell_1$-norm, and $f\colon\ell_1([0,\infty))\rightarrow[0,\infty)$ bounded continuous. Set $g(x)=(1-\max x)f(x)$. Then numerous applications of the local limit theorem (Corollary \ref{loclim}) yield that for all $\eta>0$ and $\eta^\prime<\eta$ small enough there is $n_0\ge 1$ and $\varepsilon>0$ such that for all $n\ge n_0$, $1\le k\le\varepsilon\sqrt{n}$ and $m=(m_1,\ldots,m_{\overline{k}})$ with $n^{1/8}\le m_1\le(1-\eta)n$ and $(1-\eta^\prime)n\le m_1+m_2\le n$ and $\overline{m}_1+\cdots+\overline{m}_{\overline{k}}=\overline{n}$
\begin{eqnarray*}\frac{(k-1)\nu_k}{\nu_0}(1-\eta)\le &q_{\overline{n}}(\#{\rm blocks}=\overline{k})&\le\frac{(k-1)\nu_k}{\nu_0}(1+\eta)\\
\left(g\left(\frac{(\overline{m}_1,\overline{n\!-\!m_1},1,0,\ldots)}{\overline{n}}\right)\!-\!\eta\right)^+\le &g\left(\frac{\overline{m}}{\overline{n}}\right)& \le g\left(\frac{(\overline{m}_1,\overline{n\!-\!m_1},1,0,\ldots)}{\overline{n}}\right)\!+\!\eta\\
\frac{k+1}{(n+1)^{3/2}}\frac{\sqrt{\nu_0}}{\sqrt{2\pi\sigma^2}}(1-\eta)\le &\bP(\tau_{k+1}=n+1)&\le\frac{k+1}{(n+1)^{3/2}}\frac{\sqrt{\nu_0}}{\sqrt{2\pi\sigma^2}}(1+\eta)\\
\frac{1}{m_1^{3/2}}\frac{1}{m_2^{3/2}}\frac{\nu_0}{2\pi\sigma^2}(1-\eta)^2\le &\!\!\!\!\bP(X_1=m_1)\bP(X_2=m_2)\!\!\!\!&\le
\frac{1}{m_1^{3/2}}\frac{1}{m_2^{3/2}}\frac{\nu_0}{2\pi\sigma^2}(1+\eta)^2\\
1-\eta\le&\bP(\tau_{\varepsilon\sqrt{n}}\le\eta^\prime n)&\le 1\\[-0.1cm]
\left(1-\frac{\eta^\prime}{\eta}\right)(n-m_1)\le &m_2&\le n-m_1.
\end{eqnarray*}
By taking $\limsup$ and $\liminf$ as $n\rightarrow\infty$ and then the limit as $\eta\rightarrow 0$, under which contributions outside the above ranges of $k$, $m_1$ and $m_2$ vanish, we see that
\begin{eqnarray*}\sqrt{n}\overline{q}_{\overline{n}}^*(g)
&\!\!\sim\!\!&
\sqrt{n}\sum_{k\ge 1}q_{\overline{n}}(\#{\rm blocks}=k)
\sum_m g\left(\frac{\overline{m}}{\overline{n}}\right)
\frac{(k+1)\overline{m}_1}{\overline{n}}\frac{k\overline{m}_2}{\overline{n}-\overline{m}_1}
\frac{\bP(X_1=m_1)\bP(X_2=m_2)}{\bP(\tau_{k+1}=n+1)}\\[-0.1cm]
&&\hspace{3cm}\bP(X_3^*=m_3,\ldots,X_{\overline{k}}^*=m_{\overline{k}}|X_1^*=m_1,X_2^*=m_2,\tau_{k+1}=n+1)
\\
&\!\!\longrightarrow\!\!&\frac{1}{\sqrt{2\pi\sigma^2\nu_0}}\sum_{k\ge 1}(k-1)k\nu_k\int_0^1g(x,1-x,0,\ldots)\frac{1}{x^{1/2}(1-x)^{3/2}}dx,
\end{eqnarray*}
where the first line only fails to be an equality because $X^*=(X_1^*,\ldots,X_{\overline{k}}^*)$ is a size-biased rearrangement of $(X_1,\ldots,X_{k+1},1,\ldots,1)$, so the exact expressions in the negligible cases where $m_1=1$ or $m_2=1$ are different. Since $\sum_{k\ge 1}(k-1)k\nu_k=\sigma^2$, we conclude by the convergence theorem of Haas and Miermont, \cite[Theorem 1]{HM10}. In particular we see from the multiplicative constant of the limiting measure that the limiting tree has
(ranked) dislocation measure 
$$\frac{\sigma}{\sqrt{2\pi\nu_0}}\left(\frac{1}{x^{1/2}(1-x)^{3/2}}+\frac{1}{(1-x)^{1/2}x^{3/2}}\right)1_{[1/2,1)}(x)dx=\frac{\sigma}{2\sqrt{\pi\nu_0}}\nu_B(dx),$$
which is associated with $\sqrt{\nu_0}\sigma^{-1}\cT_{\rm Br}$; see Appendix \ref{appB} for a discussion of normalisations of the Brownian CRT and its dislocation measure. 
\end{proof}

This identifies $c_n^\prime=\sqrt{\nu_0}\sqrt{n}/\sigma$. Note that Step 3. of our programme therefore is to show, for Galton-Watson trees $\cG^{(n)}$ with 
$n$ leaves, joint GP convergence in distribution of \vspace{-0.2cm}
$$\left(\frac{\sqrt{\nu}_0}{\sqrt{n}}\sigma\cG^{(n)},\frac{1}{\sqrt{\nu_0}\sqrt{n}}\sigma\modcut(\cG^{(n)})\right)\rightarrow(\cT_{\rm Br},{\rm cut}(\cT_{\rm Br})).\vspace{-0.2cm}$$
For Galton-Watson trees $\cG^{(n)}_V$ with $n$ vertices, Bertoin, Miermont and Dieuleveut showed\vspace{-0.2cm}
$$\left(\frac{1}{\sqrt{n}}\sigma\cG^{(n)}_V,\frac{1}{\sqrt{n}}\frac{1}{\sigma}{\rm cut}_{\rm BM}(\cG^{(n)}_V)\right)\rightarrow(\cT_{\rm Br},{\rm cut}(\cT_{\rm Br})),\vspace{-0.1cm}$$
$$\left(\frac{1}{\sqrt{n}}\sigma\cG^{(n)}_V,\frac{1}{\sqrt{n}}\left(\sigma+\frac{1}{\sigma}\right){\rm cut}_{\rm D}(\cG^{(n)}_V)\right)\rightarrow(\cT_{\rm Br},{\rm cut}(\cT_{\rm Br})).$$
We understand the appearance of $\sqrt{\nu_0}$, which is simply due to the different conditioning: $\cG^{(n)}$ is conditioned to have $n$ leaves, while $\cG^{(n)}_V$ is conditioned to have $n$ vertices. Dieuleveut gave a heuristic interpretation of her factor $\sigma+1/\sigma$, comparing to just $1/\sigma$ for Bertoin and Miermont by referring to the fact that the number of edges removed in a vertex fragmentation is $k\ge 2$, so she gets $\sum_kk\nu_k\times k=\sigma^2+1$ as the speed-up compared to Bertoin and Miermont. Here, the first $k$ in the sum reflects the fact that a branch point with $k$ children is selected with probability proportional to $k$. This is what
we have changed. Therefore, the average number of edges we remove is smaller when we drop the rate to being proportional to $k-1$, and we get $\sum_k(k-1)\nu_k\times k=\sigma^2$ as the speed-up compared to Bertoin and Miermont.

We deduce  $({\rm cut}_{\rm D}(\widehat{\cG}^{(n)})/c_n^\prime,\modcut(\cG^{(n)})/c_n^\prime,\ourcut(\cG^{(n)})/c_n^\prime)\rightarrow(\cT_{\rm Br},\cT_{\rm Br},\cT_{\rm Br})$ in GH$^3$ by Lemma \ref{lmcut} and Proposition \ref{propcut} and since GHP convergence implies GH convergence, completing Step 1.\ for finite variance offspring distribution. Step 1.\ for offspring distributions in the domain of an infinite variance stable distribution is beyond the scope of this paper. The interested reader is referred to 
\cite[Section 5.2]{HM10}, where Haas and Miermont establish the invariance principle for infinite-variance Galton-Watson trees using their convergence criterion. Their 
arguments would need to be adapted to cut-trees with splitting rule given in Proposition \ref{propspl}.  

\subsection{Step 2: Coding function convergence of modified Galton-Watson trees}

Given a rooted planar tree ${\bf t}$, recall that  $\zeta({\bf t})$ and $\lambda({\bf t})$ denote the total number of vertices and leaves of ${\bf t}$, respectively. Define the Lukasiewicz path, contour function and height function, denoted by ${\cal X}({\bf t}), {\cal C}({\bf t}), {\cal H}({\bf t})$, as follows.  To define ${\cal C} ({\bf t})$, consider a particle that visits the tree in planar order, starting from the root and moving continuously at unit speed up and down the edges of unit length, for each branch point exploring the subtrees in the (left to right) planar order. Then for $s\in [0, 2\zeta({\bf t})]$,  let ${\cal C}_s({\bf t})$ be the distance of the particle  to the root at time $s$.  To define ${\cal X}({\bf t})$ and  ${\cal H}({\bf t})$,  let $\{{v}_j(\ft)\colon j=0,1 ,\ldots, \zeta({\bf t})-1\}$ be the vertices of ${\bf t}$ in the order encountered by $\cC(\ft)$, without duplication.  The height function ${\cal H}({\bf t})$ is defined by letting ${\cal H}_j({\bf t})$ be the generation or \em height \em $|v_j(\ft)|$ of vertex $v_j(\ft)$. The  Lukasiewicz path is defined by ${\cal X}_0({\bf t})=0$ and
$$
{\cal X}_{j+1}({\bf t})={\cal X}_j({\bf t})+k_{v_j(\ft)}(\ft)-1,\quad j=0,\ldots, \zeta({\bf t})-1,
$$
where $ k_{v_j(\ft)}(\ft)$ is the number of children of $v_j(\ft)$ in $\ft$.
Further denote by 
$$\Lambda_0(\ft)=0,\qquad\Lambda_k({\bf t})=\#\{j\le k\colon{\cal X}_j({\bf t})-{\cal X}_{j-1}({\bf t})=-1\},\quad 1\le k\le\zeta({\bf t}),$$ 
the leaf counting process of ${\bf t}$.

Let ${\cal G}^{(n)}$ be a critical Galton-Watson tree with $n$ leaves. We recall from \cite[Theorem 8.1]{Kor12} and \cite[Theorem 3.3]{Kor14} the invariance principle 
for Galton-Watson trees in terms of coding functions, expressed as a joint convergence on the Skorokhod space ${\rm Sk}$ of c\`adl\`ag functions on $[0,1]$.
\begin{prop}
\label{TheK12} In the setting of Section \ref{sec2.1}, we have  
\begin{equation}
 \sup_{0\leq t\leq 1}\left|\frac{\Lambda_{[ \zeta({\cal G}^{(n)}) t]}({\cal G}^{(n)} )}{ n}-t\right|\vspace{-0.3cm}
 \end{equation}
 together with\vspace{-0.1cm}
 \begin{eqnarray}
\left(\frac{1}{ a_{\zeta( {\cal G}^{(n)})} }{\cal X}_{ [ \zeta( {\cal G}^{(n)}) t]}( {\cal G}^{(n)} ),
\frac{a_{\zeta( {\cal G}^{(n)})}}{ \zeta( {\cal G}^{(n)}) }{\cal C}_{ 2 \zeta( {\cal G}^{(n)}) t}({\cal G}^{(n)} ),
\frac{a_{\zeta( {\cal G}^{(n)})}}{ \zeta( {\cal G}^{(n)}) }{\cal H}_{ [\zeta( {\cal G}^{(n)}) t]}({\cal G}^{(n)} )
\right)_{0\leq t\leq 1}\end{eqnarray}
converge in distribution in $[0,1]\times{\rm Sk}^3$, as $n\rightarrow\infty$, to $(0, X, H, H)$, 
where $X$ is a normalised stable excursion and $H$ is Duquesne and Le Gall's \cite{DuLG} stable height function. If $a_n=\sigma\sqrt{n/2}$, then $H=X=\sqrt{2}B^{\rm ex}$ is a multiple of the normalized excursion $B^{\rm ex}$ of linear Brownian motion.
\end{prop}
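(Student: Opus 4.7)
The statement combines a direct citation (for the triple $(\cX, \cC, \cH)$) with a short law of large numbers (for $\Lambda$); joint convergence then follows by Slutsky.

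First, the displayed joint convergence of the three rescaled coding functions is \cite[Theorem 8.1]{Kor12} (for the Lukasiewicz path and the height function) together with its extension \cite[Theorem 3.3]{Kor14} (adding the contour function). In the finite-variance case $\alpha = 2$, the choice $a_n = \sigma\sqrt{n/2}$ is consistent with the local-limit density $p_1(x) = (2\sqrt{\pi})^{-1}\exp(-x^2/4)$ from Section \ref{sec2.1} (the Gaussian of variance $2$), so the random-walk scaling limit is $\sqrt{2}B$ for standard Brownian motion $B$, and hence $X = \sqrt{2}B^{\rm ex}$; the Duquesne--Le Gall identification $H = X$ in the Brownian case completes the identification $H = X = \sqrt{2}B^{\rm ex}$.

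Second, for the uniform leaf-counting convergence I would use the time-change $\widetilde{W}$ of Section \ref{sec2.1}. With $K_j = \inf\{k \ge K_{j-1}+1 \colon W_k - W_{k-1} = -1\}$ the $j$-th down-step time of the Lukasiewicz walk $W$, the encoding of $\cG$ satisfies $\Lambda_{K_j}(\cG) = j$ and $K_n = \zeta(\cG)$ on the event $\{\lambda(\cG) = n\}$. Under the unconditioned Galton--Watson law the gaps $K_j - K_{j-1}$ are i.i.d.\ geometric with mean $\nu_0^{-1}$ (independently of everything else, since each step of $W$ is a down-step with probability $\nu_0$), so Chernoff's inequality yields exponential concentration of $K_{\lfloor ns\rfloor}/n$ around $s/\nu_0$, uniformly in $s \in [0, 1]$. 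Since by Corollary \ref{loclim} the conditioning event $\{\lambda(\cG) = n\}$ has only polynomial decay, this exponential concentration transfers intact to the conditioned law $\bP(\,\cdot\,|\,\lambda(\cG) = n)$. Inverting $K$ on the time scale $\zeta(\cG^{(n)}) = K_n \sim n/\nu_0$, and using monotonicity of $\Lambda$ with a continuous (linear) limit, then gives $\sup_{t \in [0,1]} |n^{-1}\Lambda_{[\zeta(\cG^{(n)}) t]}(\cG^{(n)}) - t| \to 0$ in probability.

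The main obstacle, mild though it is, is this transfer from the unconditioned to the conditioned law, which is handled by the polynomial-versus-exponential comparison just described. The joint convergence on $[0, 1] \times {\rm Sk}^3$ then follows immediately from Slutsky's theorem, since the $\Lambda$-component converges in probability to a deterministic continuous limit while the coding-function triple converges weakly to $(X, H, H)$.
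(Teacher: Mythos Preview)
Your proposal is correct. Note that the paper does not give its own proof of this proposition at all: it is stated as a direct recall from \cite[Theorem~8.1]{Kor12} and \cite[Theorem~3.3]{Kor14}, covering \emph{both} the leaf-counting convergence and the joint convergence of the three coding functions. Your treatment of the coding-function triple therefore matches the paper exactly.

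Where you differ is that you supply an independent argument for the leaf-counting part rather than citing it. Your argument via the geometric gaps $K_j-K_{j-1}$, Chernoff concentration, and the polynomial lower bound on $\bP(\lambda(\cG)=n)$ from Corollary~\ref{loclim} is sound; the uniformity in $s$ follows from monotonicity of $j\mapsto K_j$ and a standard grid-plus-interpolation step, which you implicitly invoke. This is a legitimate and self-contained route, but it is not what the paper does: the leaf-counting statement is already part of Kortchemski's results (indeed it is the core of \cite[Lemma~2.7]{Kor12}, used later in the paper), so the authors simply import it.
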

Recall that $\widehat{{\cal G}}^{(n)}$ is the modified  tree associated with  ${\cal G}^{(n)}$ as introduced just before Lemma \ref{edgelemma}. Let us be precise and 
extend the planar order of $\cG^{(n)}$ to $\widehat{\cG}^{(n)}$ by placing all extra children to the left. Following ideas of Miermont \cite{Mie08} and de Raph\'elis \cite{deR15}, we introduce the following notation.
Let $\widehat{u}(i)=v_i(\widehat{\cG}^{(n)})$, $i=0, 1,\ldots, \zeta(\widehat{{\cal G}}^{(n)})-1$, and ${u}(j)=v_j(\cG^{(n)})$, $j=0,1, \ldots, \zeta({{\cal G}}^{(n)})-1\}$, be the planar enumerations of $V(\widehat{{\cal G}}^{(n)})$ and $V({{\cal G}}^{(n)})$, respectively.
For $0\le i\le\zeta(\widehat{\cG}^{(n)})-1$, define
\begin{eqnarray}
\varphi_n(i)=j\qquad\mbox{if}\quad
\begin{array}{l}u(j)=\widehat{u}(i)\in {\cal G}^{(n)} \\
 \mbox{or }u(j)\text{ is the parent of }\widehat{u}(i)\notin{\cal G}^{(n)}.
\end{array} 
\end{eqnarray}
This means that $\varphi_n(i)$ is the index of the corresponding vertex in $\cG^{(n)}\subseteq\widehat{\cG}^{(n)}$, or if $\widehat{u}(i)$ is an extra child, $\varphi_n(i)$ is the index of its parent, which will be in $\cG^{(n)}$ as no extra children have offspring.  For $0\le j\le\zeta(\cG^{(n)})-1$, define $\psi_n(j)=\#\{v\in \widehat{{\cal G}}^{(n)}\colon v\prec u(j)\}$,  where $v=\widehat{u}(k)\prec u(j)=\widehat{u}(\varphi_n(i))$ if and only if $k<j$, i.e. $v$ has strictly smaller index in $\widehat{\cG}^{(n)}$ than $u(j)$. Then $\varphi_n(\psi_n(j))=j$, but $\psi_n(\varphi_n(i))=i$ only if $\widehat{u}(i)\in\cG^{(n)}$.

Hence, the function
$\left(\frac{1}{\zeta(\widehat{{\cal G}}^{(n)} )}\psi_n([\zeta({\cal G}^{(n)} ) t ]), 0\leq t\leq1\right)$ can be regarded as the right inverse of 
$\left(\frac{1}{\zeta({\cal G}^{(n)} )}\varphi_n([\zeta(\widehat{{\cal G}}^{(n)} ) t ]), 0\leq t\leq1\right)$, in the approximate sense that their composition is the step 
function with steps $1/\zeta(\cG^{(n)})$ at times $j/\zeta(\cG^{(n)})$, $1\le j\le\zeta(\cG^{(n)})$. As $\zeta(\cG^{(n)})\ge\lambda(\cG^{(n)})=n$ will tend to infinity, 
this composition will approach the identity on $[0,1]$ uniformly in $t\in[0,1]$.
\begin{figure}[t]\centering
\includegraphics[scale=0.57]{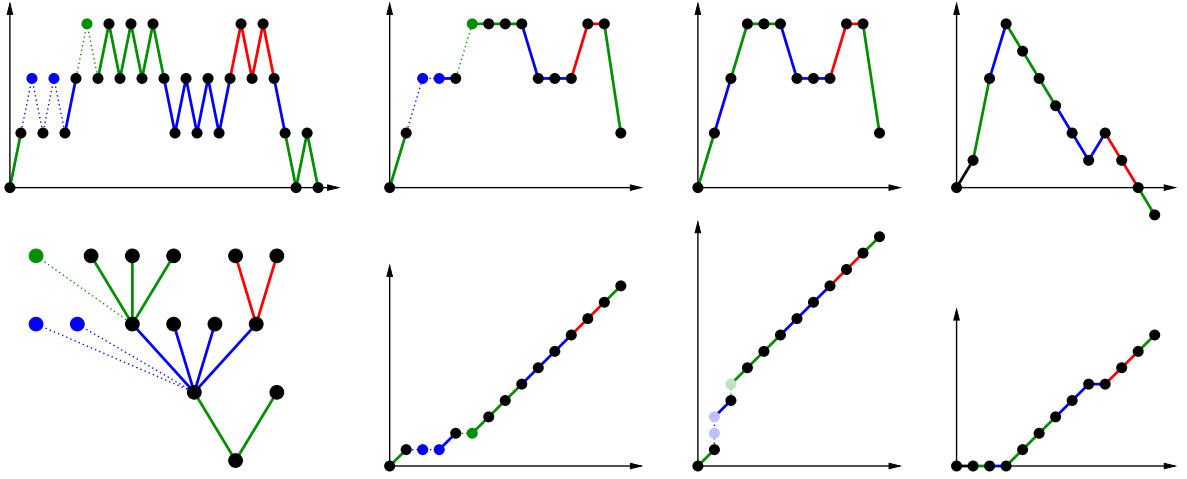}
  \caption{Illustration for some $\cG^{(n)}$ with $n=8$. Top row: ${\cal C}(\widehat{\cG}^{(n)})$, ${\cal H}(\widehat{\cG}^{(n)})$, ${\cal H}(\cG^{(n)})$, ${\cal X}(\cG^{(n)})$, bottom row: $\widehat{\cG}^{(n)}$, $\varphi_n$ and $\psi_n$ and $\Lambda(\cG^{(n)})$. Coloured lines only illustrate how functions relate.}
    \label{fig2}\vspace{-0.2cm}
\end{figure}
\begin{prop}
\label{PropStep2} In the setting of the previous theorem, with $\widetilde{a}_n=a_n/\nu_0^{1/\alpha}$, we also have joint convergence in distribution in $[0,1]^2\times{\rm Sk}^3$ of 
$$\sup_{0\leq t\leq 1}\left|\frac{\Lambda_{[ \zeta( {\cal G}^{(n)}) t]}({\cal G}^{(n)} )}{ n}-t\right|,\qquad \sup_{0\leq t\leq 1}\left|\frac{\varphi_n([\zeta(\widehat{{\cal G}}^{(n)} ) t ])}{\zeta({\cal G}^{(n)} )}-t\right|,$$
together with
 \begin{equation}
\left(\frac{1}{\widetilde{a}_n}{\cal X}_{ [ \zeta( {\cal G}^{(n)}) t]}( {\cal G}^{(n)} ),
\frac{\widetilde{a}_n}{n}{\cal C}_{ 2 \zeta( {\cal G}^{(n)}) t}({\cal G}^{(n)} ),
\frac{\widetilde{a}_n}{n}{\cal H}_{  [\zeta( \widehat{{\cal G}}^{(n)} ) t]}( \widehat{{\cal G}}^{(n)}  )
\right)_{0\leq t\leq 1}\end{equation}
to $(0,0,X,H,H)$.
\end{prop}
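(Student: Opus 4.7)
The plan is to reduce the five convergences in Proposition~\ref{PropStep2} to those of Proposition~\ref{TheK12} by means of an exact combinatorial identity that pins down the bijection $\psi_n$ in terms of coding functions already at our disposal, combined with the scalar equivalences $\widetilde{a}_n/a_{\zeta(\cG^{(n)})}\to 1$ and $\zeta(\cG^{(n)})/n\to 1/\nu_0$ in probability, both of which follow from the regular variation of $a_n$ together with a law of large numbers for the number of vertices of $\cG^{(n)}$.

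The first step is to establish the exact identity
\[
\psi_n(j)=\cX_j(\cG^{(n)})+2\Lambda_j(\cG^{(n)}),\qquad 0\le j\le\zeta(\cG^{(n)})-1,
\]
by comparing the two planar orders increment by increment: when $\cG^{(n)}$'s order advances from $u(j)$ to $u(j+1)$, the order of $\widehat{\cG}^{(n)}$ inserts $(k_{u(j)}-2)^+$ extra leaves between them, so that $\psi_n(j+1)-\psi_n(j)=1+(k_{u(j)}-2)^+$; summing and separating leaves from branch points using $\sum_{i<j}(k_{u(i)}-1)=\cX_j$ and the standing assumption $\nu_1=0$ produces the identity. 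Combined with $\Lambda_{[\zeta(\cG^{(n)})t]}/n\to t$ uniformly (Proposition~\ref{TheK12}) and $\cX_{[\zeta(\cG^{(n)})t]}/n\to 0$ uniformly in probability (since $\cX/a_{\zeta(\cG^{(n)})}$ is Skorokhod-tight and $a_n/n\to 0$), this yields
\[
\sup_{0\le t\le 1}\left|\frac{\psi_n([\zeta(\cG^{(n)})t])}{2n-1}-t\right|\longrightarrow 0\quad\text{in probability,}
\]
where $2n-1=\zeta(\widehat{\cG}^{(n)})$ by Lemma~\ref{edgelemma}. Since $\psi_n$ is strictly increasing and the limit $t\mapsto t$ is continuous and strictly increasing, the standard inversion lemma for uniformly converging monotone functions transfers this to the claimed convergence $\varphi_n([\zeta(\widehat{\cG}^{(n)})t])/\zeta(\cG^{(n)})\to t$ uniformly in probability.

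For the three Skorokhod convergences, the $\cX$-part follows from Proposition~\ref{TheK12} and $\widetilde{a}_n\sim a_{\zeta(\cG^{(n)})}$ by Slutsky. The $\cC$-part follows in the same way, the scalar factor $(\widetilde{a}_n/n)/(a_{\zeta(\cG^{(n)})}/\zeta(\cG^{(n)}))\to 1/\nu_0$ being absorbed into the identification of the target $H$. For the height of $\widehat{\cG}^{(n)}$ the decisive input is the pointwise bound
\[
\left|\cH_i(\widehat{\cG}^{(n)})-\cH_{\varphi_n(i)}(\cG^{(n)})\right|\le 1,
\]
which holds because $\widehat{u}(i)$ is either the vertex $u(\varphi_n(i))$ itself or an extra leaf attached to it; rescaled by $\widetilde{a}_n/n=o(1)$ this error is negligible, and substituting the time-change $\varphi_n/\zeta(\cG^{(n)})\to\mathrm{id}$ into Proposition~\ref{TheK12}'s $\cH(\cG^{(n)})$-convergence, combined with the continuity of the limit height function $H$ on $[0,1]$, delivers the convergence of $(\widetilde{a}_n/n)\cH_{[\zeta(\widehat{\cG}^{(n)})t]}(\widehat{\cG}^{(n)})$ to $H$.

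Joint convergence is then automatic: all five coordinates are continuous functionals (modulo the time-change $\varphi_n$) of the single pair $(\cX(\cG^{(n)}),\cH(\cG^{(n)}))$, whose joint Skorokhod convergence is given by Proposition~\ref{TheK12}, so the continuous mapping theorem and Slutsky's lemma yield the joint limit $(0,0,X,H,H)$. The main obstacle in executing this programme is the uniform handling of the random time-change $\varphi_n$ when feeding it into Skorokhod convergence for $\cH$, specifically verifying that the $O(1)$ discrepancy between $\cH(\widehat{\cG}^{(n)})$ and $\cH(\cG^{(n)})\circ\varphi_n$ really vanishes after rescaling and that composing with $\varphi_n/\zeta(\cG^{(n)})\to\mathrm{id}$ preserves uniform convergence; both reduce to the uniform continuity of the Duquesne--Le Gall height function $H$ on the compact interval $[0,1]$, which is well known for all $\alpha\in(1,2]$.
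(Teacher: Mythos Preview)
Your proposal is correct and follows essentially the same route as the paper: derive a combinatorial identity expressing $\psi_n$ in terms of $\cX(\cG^{(n)})$ and $\Lambda(\cG^{(n)})$, deduce $\psi_n/\zeta(\widehat{\cG}^{(n)})\to\mathrm{id}$ uniformly from Proposition~\ref{TheK12}, invert by monotonicity (the paper phrases this via Dini's theorem), use the pointwise bound $|\cH_i(\widehat{\cG}^{(n)})-\cH_{\varphi_n(i)}(\cG^{(n)})|\le 1$, and adjust the normalising constants using $\zeta(\cG^{(n)})/n\to 1/\nu_0$ together with the regular variation of $a_n$. Your identity $\psi_n(j)=\cX_j+2\Lambda_j$ is in fact the exact one (the paper's displayed version $\psi_n(\ell+1)=\Lambda_{\ell+1}+\Lambda_\ell+\cX_\ell$ carries a harmless index slip), and your phrase about the scalar factor $1/\nu_0$ being ``absorbed into the identification of the target $H$'' is no less precise than the paper's own ``standard argument based on the Skorokhod representation theorem'' at that same step.
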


The proof will be based on the following lemma.

\begin{lm}
\label{lemTime} We have
$$\sup_{0\leq t\leq 1}\left|\frac{1}{\zeta({\cal G}^{(n)} )}\varphi_n([\zeta(\widehat{{\cal G}}^{(n)} ) t ])-t\right|\rightarrow0,\quad \text{in probability, as }n\rightarrow\infty.$$
\end{lm}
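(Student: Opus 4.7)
The plan is to work with the (essentially) inverse function $\psi_n$ of $\varphi_n$, for which one can read off an explicit formula from the coding functions of $\cG^{(n)}$. The crucial observation is that, since the $k_v-2$ extra children at a branch point $v\in{\rm Br}(\cG^{(n)})$ of degree $k_v\ge 2$ are placed to the left of the original children, the planar traversal of $\widehat{\cG}^{(n)}$ visits these $k_v-2$ extra leaves immediately after $v$ and only then descends into the original subtrees rooted at the children of $v$. Tracking this gives
$$\psi_n(j+1)-\psi_n(j)=1+(k_{u(j)}-2)^+,\qquad 0\le j\le\zeta(\cG^{(n)})-1,$$
and, since $\nu_1=0$ forces $k_{u(j)}\in\{0\}\cup\{2,3,\ldots\}$, the right-hand side equals $(k_{u(j)}-1)+2\ind_{\{k_{u(j)}=0\}}$. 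Summing in $j$ yields the clean identity
$$\psi_n(j)=\cX_j(\cG^{(n)})+2\Lambda_j(\cG^{(n)}),\qquad 0\le j\le\zeta(\cG^{(n)}).$$

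Once this identity is in place, the uniform convergence of the rescaled $\psi_n$ to the identity will be a direct consequence of Proposition \ref{TheK12}. First, the leaf-count convergence $\sup_t|\Lambda_{[\zeta(\cG^{(n)})t]}(\cG^{(n)})/n-t|\to 0$ in probability, combined with $n/(2n-1)\to 1/2$, yields $\sup_t|2\Lambda_{[\zeta(\cG^{(n)})t]}(\cG^{(n)})/(2n-1)-t|\to 0$ in probability. Second, Proposition \ref{TheK12} makes the family $\cX_{[\zeta(\cG^{(n)})t]}(\cG^{(n)})/\widetilde{a}_{\zeta(\cG^{(n)})}$ tight in the supremum norm (its limit is bounded on $[0,1]$), and $\widetilde{a}_{\zeta(\cG^{(n)})}=o(n)$, so $\sup_t|\cX_{[\zeta(\cG^{(n)})t]}(\cG^{(n)})|/(2n-1)\to 0$ in probability. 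Adding these produces
$$\sup_{0\le t\le 1}\left|\frac{\psi_n([\zeta(\cG^{(n)})t])}{2n-1}-t\right|\longrightarrow 0\qquad\text{in probability.}$$

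The final step is to transport this convergence to the generalised inverse $\varphi_n$. Since $\psi_n$ is strictly increasing on $\{0,\ldots,\zeta(\cG^{(n)})\}$ and $\varphi_n(i)=\max\{j\colon\psi_n(j)\le i\}$, a standard monotone inversion argument converts the uniform bound $\sup_t|F_n(t)-t|\le\varepsilon_n$ on the step function $F_n(t):=\psi_n([\zeta(\cG^{(n)})t])/(2n-1)$ into the uniform bound $\sup_s|G_n(s)-s|\le\varepsilon_n+1/(2n-1)+1/\zeta(\cG^{(n)})$ on $G_n(s):=\varphi_n([(2n-1)s])/\zeta(\cG^{(n)})$, the extra terms arising from the integer-part rounding and the step size of the generalised inverse. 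This delivers the lemma. The main obstacle is the combinatorial identity expressing $\psi_n$ via $\cX$ and $\Lambda$, which requires carefully locating each extra leaf in the planar order of $\widehat{\cG}^{(n)}$; once it is available, the probabilistic content is entirely supplied by Proposition \ref{TheK12} and the inversion is routine.
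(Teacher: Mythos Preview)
Your proof is correct and follows essentially the same strategy as the paper's: derive an explicit identity expressing $\psi_n$ through the Lukasiewicz path $\cX$ and the leaf-counting process $\Lambda$, apply Proposition~\ref{TheK12} to get uniform convergence of the rescaled $\psi_n$ to the identity, and then invert using monotonicity. Your identity $\psi_n(j)=\cX_j(\cG^{(n)})+2\Lambda_j(\cG^{(n)})$ is a slightly tidier variant of the paper's formula $\psi_n(\ell+1)=\Lambda_{\ell+1}+\Lambda_\ell+\cX_\ell$, and you finish by a direct generalised-inverse estimate where the paper invokes Dini's theorem; both are equivalent for the purpose at hand.
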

\begin{proof}
According to the definition of $\psi_n$, and by the convention that extra children are placed to the left of other children (and hence enumerated first), we have for any $\ell<\zeta({\cal G}^{(n)} )$,
$$
\psi_n(\ell+1)=\Lambda_{\ell+1} ({\cal G}^{(n)} )+\sum_{j\leq \ell}1_{\{u(j)\in {\rm Br}({\cal G}^{(n)} )\}}+\sum_{j\leq \ell}1_{\{u(j)\in {\rm Br}({\cal G}^{(n)} )\}}(k_{u(j)}(\cG^{(n)})-2).
$$
Meanwhile, by definition of the Lukasiewicz path, we have 
$$
\sum_{j\leq \ell}1_{\{u(j)\in {\rm Br}({\cal G}^{(n)} )\}}(k_{u(j)}(\cG^{(n)})-1)- \Lambda_{\ell} ({\cal G}^{(n)} )={\cal X}_{\ell}({\cal G}^{(n)} ).
$$
Thus,
$$
\psi_n(\ell+1)= \Lambda_{\ell+1}({\cal G}^{(n)} )+ \Lambda_{\ell}({\cal G}^{(n)} )+{\cal X}_{\ell}({\cal G}^{(n)} ).
$$
Note that $a_n=o(n)$. Since $\zeta(\widehat{{\cal G}}^{(n)} )=2n-1$, one can immediately see from Proposition \ref{TheK12} that
$$\sup_{0\leq t\leq 1}\left|\frac{1}{\zeta(\widehat{{\cal G}}^{(n)} )}\psi_n([\zeta({\cal G}^{(n)} ) t ])-t\right|\longrightarrow0,\quad \text{in probability}.$$
By definition of $\varphi_n$ and $\psi_n$, one sees $\varphi_n(\psi_n(k))=k$.  So for fixed $ t\in [0, 1]$, as $ n\rightarrow \infty$,
$$\frac{1}{\zeta({\cal G}^{(n)} )}\varphi_n([\zeta(\widehat{{\cal G}}^{(n)} ) t ])\longrightarrow t,\quad\text{ in probability.}$$
Since $t\mapsto  \varphi_n([\zeta(\widehat{{\cal G}}^{(n)} ) t ])$ is non-decreasing for each $n \geq 1$, Dini's Theorem yields 
$$\left(\frac{1}{\zeta({\cal G}^{(n)} )}\varphi_n([\zeta(\widehat{{\cal G}}^{(n)} ) t ]),\quad 0\leq t\leq 1\right) {\longrightarrow} \left(t,\, 0\leq t\leq 1\right)\quad\text{ in distribution}.$$
 And hence the desired result holds since the identity function is deterministic and continuous. \end{proof}
\begin{rem}\rm
The tree $\widehat{\cal G}^{(n)}$ can be regarded as a 2-type Galton-Watson tree. The analogue of Lemma \ref{lemTime} was obtained by Miermont \cite{Mie08}  for irreducible and non-degenerate multi-type Galton-Watson trees  under a ``small exponential moment'' condition; see Lemma 6 and the proof of Theorem 2 there.
\end{rem}

\noindent {\it Proof of Proposition \ref{PropStep2}.}  We see that
$$
\left|{\cal H}_{  \varphi_n([\zeta( \widehat{{\cal G}}^{(n)}) t])}({\cal G}^{(n)})-{\cal H}_{  [\zeta( \widehat{{\cal G}}^{(n)} ) t]}( \widehat{{\cal G}}^{(n)}  )\right|\leq1.
$$
Thus with Lemma \ref{lemTime} and Proposition \ref{TheK12}, we obtain as $n\rar\infty$,
$$
\left(\frac{a_{\zeta( {\cal G}^{(n)} )}}{ \zeta( {\cal G}^{(n)} ) }{\cal H}_{  \varphi_n(\zeta( \widehat{{\cal G}}^{(n)}) t)}({\cal G}^{(n)})\right)_{0\leq t\leq 1}\overset{d}{\longrightarrow}H.
$$
Meanwhile, by \cite[Lemma 2.7]{Kor12}, we have
$$
\frac{\zeta({\cal G}^{(n)}) }{n}\longrightarrow \frac{1}{\nu_0},
$$
in distribution and hence in probability. Then a standard argument based on the Skorokhod representation theorem establishes the desired result. \qed

\bigskip

Since uniform convergence of either height functions or contour functions implies GHP convergence, this completes Step 2.\ with $c_n=n/(\sqrt{2}\widetilde{a}_n)$, not 
just in the finite-variance case with $c_n=\sqrt{n}/(\sigma\sqrt{\nu_0})$ by Lemma \ref{lem2.1}, but also for offspring distributions in the stable domain of attraction. 
In fact, the convergence of Lukasiewicz paths of $\widehat{\cG}^{(n)}$ can be proved similarly.

\subsection{Step 3: Joint GP convergence of the modified tree and its cut-tree}

In the sequel, we mainly have the case of a finite-variance modified Galton-Watson tree in mind, but we include the stable case, where the argument is the same. From 
here, we follow Dieuleveut \cite[Section 4]{Die13} closely (and \cite[Section 2]{Die13} for the stable case, which contains some of the details also needed for the 
finite variance case).
Let $\zeta_n=\zeta(\cG^{(n)})$ and $\widehat{\zeta}_n=\zeta(\widehat{\cG}^{(n)})$. Also write 
$(X^{(n)},H^{(n)},C^{(n)})$ for suitably scaled Lukasiewicz path $\cX(\cG^{(n)})$, height function $\cH(\cG^{(n)})$ and contour function $\cC(\cG^{(n)})$, $n\ge 1$, 
which converge to the corresponding triplet $(X,H,H)$ associated with a stable tree $\cT$ (including the Brownian CRT, in which case $X=H=\sqrt{2}B^{\rm ex}$). 
\begin{lm}[cf. \cite{Die13} Lemmas 2.4, 4.2]\label{lmrev} If $(H^{(n)},C^{(n)},X^{(n)})\rightarrow(H,H,X)$ in distribution in ${\rm Sk}^3$, then $(H^{(n)},X^{(n)},\widetilde{X}^{(n)})\rightarrow (H,X,\widetilde{X})$ in distribution in ${\rm Sk}^3$,
  where $\widetilde{X}^{(n)}$ and $\widetilde{X}$ are Lukasiewicz paths with all orders of children reversed.
\end{lm}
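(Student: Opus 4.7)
The approach is to reduce the convergence of $\widetilde X^{(n)}$ to the given convergence of $(H^{(n)},C^{(n)},X^{(n)})$ via two inputs: a distributional symmetry of $\cG^{(n)}$ under reversal of all planar child-orders, and the exact time-reversal identity $\widetilde C^{(n)}(s)=C^{(n)}(2\zeta_n-s)$ for the reversed contour. The first input is immediate: Galton-Watson offspring sets are exchangeable and the conditioning $\lambda(\cG)=n$ is invariant under reversing every planar order, so $\widetilde{\cG}^{(n)}\overset{d}{=}\cG^{(n)}$, hence $(\widetilde H^{(n)},\widetilde C^{(n)},\widetilde X^{(n)})\overset{d}{=}(H^{(n)},C^{(n)},X^{(n)})$. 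In particular $\widetilde X^{(n)}\overset{d}{=}X^{(n)}$ is tight, and so is the triple $(H^{(n)},X^{(n)},\widetilde X^{(n)})$ in ${\rm Sk}^3$.

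For the second input, reversing every planar order time-reverses the depth-first contour exploration, which gives the deterministic identity above. Since time reversal is continuous on ${\rm Sk}$ for paths whose initial and terminal values match (as is the case for contour functions of finite rooted trees), and since $\zeta_n/n\to 1/\nu_0$ by \cite{Kor12}, the hypothesis extends to the joint convergence $(H^{(n)},C^{(n)},X^{(n)},\widetilde C^{(n)})\to(H,H,X,H(1-\cdot))$ in distribution. I would then apply Skorokhod's representation theorem to assume the above holds almost surely along any subsequence on which the triple $(H^{(n)},X^{(n)},\widetilde X^{(n)})$ is convergent in distribution.

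Finally, to upgrade the convergence of $\widetilde C^{(n)}$ to $\widetilde X^{(n)}$, I would apply to the reversed tree the analogue of the index-change of $\varphi_n$-type appearing in Lemma \ref{lemTime}; this index-change passes in the limit to the identity. In the finite-variance case $X=H$, so the scaling limit of $\widetilde X^{(n)}$ coincides with that of $\widetilde C^{(n)}$, namely $H(1-\cdot)$, and we may take $\widetilde X:=H(1-\cdot)$; in the stable case the same machinery identifies $\widetilde X$ as the reversed stable Lukasiewicz excursion associated to $H$ via Duquesne--Le Gall's height-process construction. The main obstacle is that the discrete map from contour function to Lukasiewicz path is not ${\rm Sk}$-continuous, so one cannot simply invoke continuous mapping. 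Following the argument of \cite{Die13}, this is resolved by a subsequential-limit identification: the marginal symmetry $\widetilde X^{(n)}\overset{d}{=}X^{(n)}$ determines the marginal law of any subsequential limit $\widetilde X^\ast$, while the joint convergence with $\widetilde C^{(n)}\to H(1-\cdot)$ forces $\widetilde X^\ast$ to be coupled to $H(1-\cdot)$ exactly as $X$ is coupled to $H$, fixing the joint law of $(H,X,\widetilde X^\ast)$ uniquely as $(H,X,\widetilde X)$.
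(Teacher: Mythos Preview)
Your argument in the finite-variance case is essentially the paper's: both use the distributional symmetry $(\widetilde H^{(n)},\widetilde C^{(n)},\widetilde X^{(n)})\overset{d}{=}(H^{(n)},C^{(n)},X^{(n)})$ together with the deterministic contour time-reversal $\widetilde C^{(n)}_t=C^{(n)}_{1-t}$ and the continuity of the Brownian limit $X=H$ to pin down the subsequential limit as $\widetilde X=H(1-\cdot)$.

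In the stable case the paper takes a different and more direct route. Rather than passing through $H$ or $\widetilde C$, it uses that $\widetilde X$ is a \emph{measurable function of the jump times and jump sizes of $X$}: reversing all planar orders permutes the branch points (which are in bijection with the jumps) without changing their multiplicities, and the new first-visit times can be read off from $X$ alone. Since Skorokhod convergence $X^{(n)}\to X$ entails convergence of jump times and sizes, this yields $\widetilde X^{(n)}\to\widetilde X$ jointly with $(H^{(n)},X^{(n)})\to(H,X)$ by a continuous-mapping-type argument on the jump data, never invoking the height process.

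Your stable-case argument, by contrast, tries to recover $\widetilde X^\ast$ from the coupling $(H(1-\cdot),\widetilde X^\ast)\overset{d}{=}(H,X)$. That equality in law, together with the known law of $(H,X)$, does \emph{not} by itself fix the joint law of the triple $(H,X,\widetilde X^\ast)$: you would need that $X$ is a measurable function of $H$ (so that $\widetilde X^\ast$ is determined by $H(1-\cdot)$, hence by $H$). Duquesne--Le~Gall give the opposite direction, $H$ as a functional of $X$; the reverse measurability for stable trees is a substantially deeper statement than anything needed here, and your proposal does not supply it. The paper's jump-based identification avoids this issue entirely.
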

\begin{proof} Dieuleveut's argument only uses the identical distribution of reversed quantities $(\widetilde{X},\widetilde{C})$, the fact that $\widetilde{X}$ is a
  measurable function of the jump sizes and jump times of $X$ to identify the limit in the stable case, and the symmetry 
  $\widetilde{C}^{(n)}_t=C^{(n)}_{1-t}$ and continuity of $H$ to identify the limit in the case of a Brownian limit. Hence, her argument also establishes this analogous
  result.
\end{proof}
\begin{lm}[cf. \cite{Die13} Lemmas 2.7, 2.8, 4.3, 4.4]\label{lmlimits} Let $(X^{(n)},H^{(n)},\widetilde{X}^{(n)},U^{(n)})\rightarrow(X,H,\widetilde{X},U)$ almost surely, for some
  $U^{(n)}=(U^{(n)}_i,i\ge 1)$ and $U=(U_i,i\ge 1)$ with $U^{(n)}_i\in\{\frac{j}{\zeta_n},1\le j\le\zeta_n\}$, and i.i.d. $U_i\sim{\rm Unif}(0,1)$ independent of 
  $(X,H,\widetilde{X})$. Then we also have the following limits.
  \begin{itemize}
    \item The shape of the subtree $\cR^{(n)}(k)$ of $\cG^{(n)}$ spanned by $0,U_1^{(n)},\ldots,U_k^{(n)}$ is constant a.s.\
      for $n$ large enough, equal to the shape $R(k)$ of the subtree $\cR(k)$ of $\cT$ spanned by $0,U_1,\ldots,U_k$.
    \item For every edge $e=(v\rightarrow v^\prime)\in E(R(k))$, denote by $e^{(n)}_+(k),e^{(n)}_-(k)\in V(\cR^{(n)}(k))$ the vertices corresponding to $v=e^+(k)$ and $v^\prime=e^-(k)$,
      and by
      $V_e^{(n)}(k)$ the set of vertices between $e^{(n)}_+(k)$ and $e^{(n)}_-(k)$. Then the rescaled lengths of the edge converge a.s.:
      $$\frac{\widetilde{a}_n}{n}\left(1+\#V_e^{(n)}(k)\right)=H_{b_n(e^{(n)}_+(k))}^{(n)}-H_{b_n(e^{(n)}_-(k))}^{(n)}\rightarrow H_{b(e_+(k))}-H_{b(e_-(k))},$$
      where $b_n(w)$ is the first time of $H^{(n)}$ corresponding to $w\in V(\cG^{(n)})$, similarly $b(w)$, $w\in\cT$.
    \item For every branch point $v\in{\rm Br}(R(k))$, rescaled numbers of children converge a.s., i.e.
      $$\frac{1}{\widetilde{a}_n}k_v(\cG^{(n)})\sim\frac{1}{\widetilde{a}_n}\left(k_v(\cG^{(n)})-1\right)=\Delta X_{b_n(v)}^{(n)}\rightarrow\Delta X_{b(v)},$$
      which vanishes in the finite-variance case.
    \item For every edge $e\in E(R(k))$, sums of rescaled numbers of children converge a.s., as follows:
      \begin{align*}\frac{1}{\widetilde{a}_n}\sum_{v\in V_e^{(n)}(k)}\left(k_v(\cG^{(n)})-1\right)
		&\rightarrow (X_{b(e^+)}+\widetilde{X}_{\widetilde{b}(e^+)})-(X_{b(e^-)}+\widetilde{X}_{\widetilde{b}(e^-)})-\Delta X_{b(e^-)},
      \end{align*}
      which in the finite-variance case simplifies to $H_{b(e^+)}-H_{b(e^-)}$. If we replace $(k_v(\cT_n)-1)$ by $k_v(\cT_n)$, we get the same limit
      in the stable case, while in the finite-variance case, $n/a_n^2=1/\sigma^2$ and we obtain a limit $(1+1/\sigma^2)(H_{b(e^+)}-H_{b(e^-)})$ instead.
  \end{itemize}
\end{lm}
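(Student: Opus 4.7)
The plan is to adapt Dieuleveut's arguments in \cite[Lemmas 2.7, 2.8, 4.3, 4.4]{Die13} to our tree $\cG^{(n)}$ and its associated coding functions, taking full advantage of the assumed almost sure joint convergence of $(X^{(n)},H^{(n)},\widetilde{X}^{(n)},U^{(n)})$. The four items will be handled in order, each by identifying the relevant combinatorial quantity with a functional of the coding functions and passing to the limit by continuity.

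For the first item, I would express the shape of $\cR^{(n)}(k)$ through the pattern of pairwise infima of $H^{(n)}$ on the sampled intervals $[U_i^{(n)}\wedge U_j^{(n)},U_i^{(n)}\vee U_j^{(n)}]$, together with the relative orderings of the $U_i^{(n)}$. Almost surely the sampling points $U_i$ land at distinct leaves of $\cT$, each pair $(U_i,U_j)$ has a unique branch point with a strict local infimum, and all $\binom{k}{2}$ such infima are distinct and below the leaf heights. Joint convergence $U^{(n)}\to U$ and uniform convergence $H^{(n)}\to H$ then force the matching infima and their relative order to agree for $n$ large enough, which is exactly the shape stability.

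Having fixed the shape, the second item is essentially automatic: each directed edge $e$ corresponds to a pair of times $b_n(e_\pm^{(n)}(k))$, defined as first visit times by $H^{(n)}$ to levels determined by the sampled points, and uniform convergence of $H^{(n)}$ to $H$ gives convergence of heights at those times. The rescaled edge length is exactly this height difference (the $+1$ accounts for the edge itself), so the limit falls out directly. For the third item, the identity $k_v(\cG^{(n)})-1=\Delta\cX_{b_n(v)}(\cG^{(n)})$ is immediate from the definition of the Lukasiewicz path; rescaling yields $\frac{1}{\widetilde{a}_n}(k_v(\cG^{(n)})-1)=\Delta X^{(n)}_{b_n(v)}\to\Delta X_{b(v)}$ by joint Skorokhod convergence of $X^{(n)}$ with the visit times. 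In the finite-variance case $X=\sqrt{2}B^{\rm ex}$ is continuous, so the limit vanishes.

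The main obstacle is the fourth item, where one must account correctly for the vertices in $V_e^{(n)}(k)$ along the ancestral path between $e_+^{(n)}(k)$ and $e_-^{(n)}(k)$. Following Dieuleveut, I would split the sum $\sum_{v\in V_e^{(n)}(k)}(k_v(\cG^{(n)})-1)$ into contributions from vertices whose remaining children lie to the right (contributing to the forward Lukasiewicz path $X^{(n)}$ evaluated at endpoints) and to the left (contributing to the reversed Lukasiewicz path $\widetilde{X}^{(n)}$ evaluated at corresponding reversed times $\widetilde{b}_n$), with a correction $-\Delta X_{b(e^-)}$ subtracting the jump at $e_-^{(n)}(k)$ itself so it is not double counted. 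The limit then follows from the joint convergence of $(X^{(n)},\widetilde{X}^{(n)})$ established in Lemma \ref{lmrev}. The finite-variance simplification to $H_{b(e^+)}-H_{b(e^-)}$ uses continuity of $X=\widetilde{X}=\sqrt{2}B^{\rm ex}$ (killing the jump) combined with the identification $X+\widetilde{X}=2H$ at endpoints that correspond to ancestors of sampled leaves, while replacing $k_v-1$ by $k_v$ adds $\#V_e^{(n)}(k)$ extra terms, which rescaled by $1/\widetilde{a}_n$ contributes $(n/\widetilde{a}_n^2)(H_{b(e^+)}-H_{b(e^-)})=(1/\sigma^2)(H_{b(e^+)}-H_{b(e^-)})$ by the second item and Lemma \ref{lem2.1}.
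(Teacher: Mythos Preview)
Your proposal is correct and follows essentially the same approach as the paper: the paper's own proof is a single sentence observing that Dieuleveut's arguments in \cite[Lemmas~2.7, 2.8, 4.3, 4.4]{Die13} are entirely deterministic and go through verbatim once the limiting variables $(X,H,\widetilde{X},U)$ avoid the obvious degeneracies almost surely. You have in effect unpacked the content of those lemmas, which is helpful but not required---the paper simply defers to Dieuleveut.
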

\begin{proof} Dieuleveut's arguments are entirely deterministic, just requiring the limiting random variables to avoid certain degeneracies a.s.
\end{proof}
To apply this, take independent $(U_i,i\ge 1)$ and use Skorokhod's representation theorem to have the convergences of Proposition \ref{PropStep2} and Lemma \ref{lmrev} 
jointly and almost surely. We now use $U_i$ to sample a uniform edge in $\widehat{\cG}^{(n)}$ and take as $U^{(n)}_i$ the corresponding time of
$H^{(n)}$, i.e. $U^{(n)}_i=\varphi_n([(2n-2)U_i]+1)/\zeta(\cG^{(n)})$, which is not independent of $\cG^{(n)}$, but since $\varphi_n$, 
converges uniformly to the identity on
$[0,1]$, the almost sure convergence needed to apply Lemma \ref{lmlimits} holds, with limit $U_i$ independent of the limiting coding functions.

\begin{prop}[cf. \cite{Die13} Propositions 2.5 and 4.1]\label{prop4.1} Consider edge samples $\xi_n(i)$ in $\widehat{\cG}^{(n)}$ and the continuous-time Dieuleveut 
  vertex fragmentation of $\widehat{\cG}^{(n)}$ that removes the edges above vertex $v\in{\rm Br}(\widehat{\cG}^{(n)})$ at rate $k_v(\widehat{\cG}^{(n)})/2\widetilde{a}_n$.
  Define mass processes $(\mu_{n,\xi_n(i)}(t))_{t\ge 0}$ capturing the evolution of the proportion of leaves in the component containing $\xi_n(i)$, $i\ge 1$, and   
  separation times $\tau_n(i,j)$ of $\xi_n(i)$ and $\xi_n(j)$, $i,j\ge 1$. Then in {\rm GP}$\times[0,\infty)^\bN\times{\rm Sk}^\bN$, in distribution, as $n\rightarrow\infty$,
  $$\left(\frac{\widetilde{a}_n}{n}\widehat{\cG}^{(n)},(\tau_n(i,j))_{i,j\ge 1},(\mu_{n,\xi_n(i)}(t))_{t\ge 0,i\ge 1}\right)\rightarrow\left(\cT,(c^{-1}\tau(i,j))_{i,j\ge 1},(\mu_{\xi(i)}(ct))_{t\ge 0,i\ge 1}\right),$$
  where $c=1$ in the stable case and/or when rates are proportional to $k-1$, while it is $c=1+1/\sigma^2$ only in the finite variance case when rates are
  proportional to $k$.
\end{prop}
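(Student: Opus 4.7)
The plan is to adapt Dieuleveut's proofs of her Propositions 2.5 and 4.1 to our setting of $\widehat{\cG}^{(n)}$, with Proposition \ref{PropStep2} playing the role of her coding-function invariance principle. The strategy is standard for this family of results: first obtain finite-dimensional convergence of the reduced tree spanned by the first $k$ sampled edges jointly with the induced fragmentation clocks and masses, and then upgrade to the joint GP statement by letting $k\to\infty$.

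First, I combine Proposition \ref{PropStep2} with Lemma \ref{lmrev} and with an independent i.i.d.\ sequence $(U_i)_{i\ge 1}\sim{\rm Unif}(0,1)$, and invoke Skorokhod's representation theorem to assume almost-sure joint convergence on a common probability space. Setting $U_i^{(n)}=\varphi_n(\lceil(2n-2)U_i\rceil)/\zeta(\cG^{(n)})$ identifies the $\cH(\cG^{(n)})$-time corresponding to a uniformly sampled edge $\xi_n(i)$ of $\widehat{\cG}^{(n)}$, and Lemma \ref{lemTime} yields $U_i^{(n)}\to U_i$ a.s. Lemma \ref{lmlimits} then gives, for each fixed $k$, almost-sure stabilisation of the combinatorial shape of $\cR^{(n)}(k)$ to $R(k)$ and convergence of rescaled edge lengths to the lengths of the corresponding edges of $\cR(k)\subset\cT$.

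Next, I address the separation times. Conditionally on $\widehat{\cG}^{(n)}$, each branch point $v$ is activated at rate $k_v(\widehat{\cG}^{(n)})/(2\widetilde{a}_n)$; since $k_v(\widehat{\cG}^{(n)})=2(k_v(\cG^{(n)})-1)$ at all branch points of $\cG^{(n)}$, the cumulative cut-rate along an edge $e$ of $\cR^{(n)}(k)$ is
$$\sum_{v\in V_e^{(n)}(k)}\frac{k_v(\widehat{\cG}^{(n)})}{2\widetilde{a}_n}=\sum_{v\in V_e^{(n)}(k)}\frac{k_v(\cG^{(n)})-1}{\widetilde{a}_n},$$
which by the last bullet of Lemma \ref{lmlimits} converges a.s.\ to $H_{b(e^+)}-H_{b(e^-)}$ in the finite-variance case (and to the length of $e$ in $\cR(k)$ more generally). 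Coupling the exponential clocks driving the discrete and continuum fragmentations then gives joint convergence of the separation-time matrix $(\tau_n(i,j))_{i,j\le k}$ to $(\tau(i,j))_{i,j\le k}$ with no extra time-change, i.e.\ $c=1$. The constant $c=1+1/\sigma^2$ in the alternative setting with rate proportional to $k_v$ itself comes from the same bullet of Lemma \ref{lmlimits} applied to $\sum_v k_v$ in place of $\sum_v(k_v-1)$, exactly as anticipated in the heuristic given after Proposition \ref{cutmb}.

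Finally, for the mass processes, I decompose $\mu_{n,\xi_n(i)}(t)$ into the contribution from leaves lying on the reduced tree $\cR^{(n)}(k)$ (of order $k$ and negligible after rescaling by $n$) and the contribution from the subtrees of $\widehat{\cG}^{(n)}$ hanging off $\cR^{(n)}(k)$. Conditionally on the reduced skeleton, these hanging subtrees are Galton-Watson-like with total leaf number $n$ minus a negligible remainder, and Step~2 applied componentwise identifies their normalised leaf counts with the $\mu_{\rm Br}$-masses of the corresponding subtrees of $\cT\setminus\cR(k)$. Passing $k\to\infty$ then upgrades the finite-dimensional marginal statement to joint GP convergence of $(\widetilde{a}_n/n)\widehat{\cG}^{(n)}$ with the full fragmentation data. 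The hard part is the uniform-in-$k$ control required to swap the limits $n\to\infty$ and $k\to\infty$: this is where Dieuleveut's tightness estimates (notably her Lemma 4.5) must be reworked by replacing her vertex-conditioned local limit theorem with Corollary \ref{loclim} for $\bP(S_j=n)$ and by invoking Lemma \ref{bound} to bound generation-level populations, after which the rest of her argument carries over verbatim.
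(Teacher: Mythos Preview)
Your first three paragraphs are essentially the paper's argument, spelled out in more detail than the paper itself provides: assemble the almost-sure framework via Skorokhod from Proposition~\ref{PropStep2} and Lemma~\ref{lmrev}, push uniform edge samples through $\varphi_n$ to obtain $U_i^{(n)}\to U_i$, and read off reduced-tree shape, edge-length and cumulative-rate convergence from Lemma~\ref{lmlimits}. This matches the paper's one-sentence proof, which simply records that Dieuleveut's arguments only require sampling, tree convergence and rate convergence, all of which are supplied by Proposition~\ref{PropStep2} and Lemmas~\ref{lmrev} and~\ref{lmlimits}.

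Your final paragraph, however, imports machinery that does not belong to this proposition. The convergence of the mass processes $(\mu_{n,\xi_n(i)}(t))_{t\ge 0}$ in ${\rm Sk}$ already follows from the coding-function and rate convergences: masses of components are read directly off the coding functions as Lebesgue measures of the coding intervals, not via a decomposition into hanging Galton--Watson subtrees with ``Step~2 applied componentwise''. There is no swap of limits $n\to\infty$ and $k\to\infty$ to justify here, and hence no need for Lemma~\ref{bound} or a Lemma~4.5-type tail estimate. In the paper's structure those estimates appear only \emph{after} this proposition (as Lemmas~\ref{lm2.1} and~\ref{lm4.5}) and serve a different purpose, namely passing from the modified distances $\delta_n^\prime$ to the genuine cut-tree distances $\delta_n$. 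Likewise, the GP convergence of $(\widetilde{a}_n/n)\widehat{\cG}^{(n)}$ is immediate from the height-function convergence of Proposition~\ref{PropStep2} and requires no $k\to\infty$ upgrade. So your last paragraph should simply be replaced by the observation that, with tree and rate convergences in hand, Dieuleveut's proof of her mass-process convergence carries over verbatim.
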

\begin{proof} Dieuleveut's arguments work since we can still sample $\xi_n(i)$ from $U_i^{(n)}$ in $[0,1]$, and the remaining arguments only depend on tree convergences and
  rate convergences (up to a factor of $c$), both of which we have, from Proposition \ref{PropStep2}, Lemma \ref{lmrev} and Lemma \ref{lmlimits}.
\end{proof}

The convergences achieved so far imply the convergence of certain modified distances for the discrete cut-trees. These modified distances resemble the Brownian cut-tree
distances and take the following form. We enumerate the $2n-2$ edges of $\widehat{\cG}^{(n)}$ by $1,\ldots,2n-2$ and define for $i,j\in\{1,\ldots,2n-2\}$
$$\delta_n^\prime(0,i)=\int_0^\infty\mu_{n,i}(t)dt\quad\mbox{and}\quad\delta^\prime(i,j)=\int_{t_n(i,j)}^\infty(\mu_{n,i}(t)+\mu_{n,j}(t))dt,$$
where $t_n(i,j)$ is the most recent time when edges $i$ and $j$ were in the same component in the continuous-time vertex fragmentation of $\widehat{\cG}^{(n)}$.

\begin{lm}[cf. \cite{Die13} Lemma 2.1]\label{lm2.1} For all $i,j\in\{1,\ldots,2n-2\}$, we have
  $$\bE\left[\left|\frac{\widetilde{a}_n}{n-1}\delta_n(i,j)-\delta_n^\prime(i,j)\right|^2\right]\le\frac{\widetilde{a}_n}{n-1}\bE\left[\delta_n^\prime(0,i)+\delta_n^\prime(0,j)\right].$$
\end{lm}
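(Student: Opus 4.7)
The plan is to recognize $\frac{\widetilde{a}_n}{n-1}\delta_n(i,j) - \delta_n'(i,j)$ as a normalized difference between a counting process and its compensator in the continuous-time vertex fragmentation of $\widehat{\cG}^{(n)}$ introduced in Proposition \ref{prop4.1}, and then to conclude via the standard $L^2$-identity for counting-process martingales. First, I would associate to each sampled edge $i$ the counting process $N_{n,i}(t)$ tallying the branch-point selections that occur in the component of $\widehat{\cG}^{(n)}$ containing $i$ up to time $t$. Since each branch point $v$ in the current component of $i$ is selected at rate $k_v(\widehat{\cG}^{(n)})/(2\widetilde{a}_n)$, independently across vertices, the $\cF^{(n)}_t$-predictable intensity of $N_{n,i}$ is
$$\lambda_{n,i}(t)=\frac{1}{2\widetilde{a}_n}\sum_{v\in\mathrm{Br}(\mathrm{comp}(i,t))}k_v(\widehat{\cG}^{(n)})=\frac{n-1}{\widetilde{a}_n}\mu_{n,i}(t),$$
using that $\sum_v k_v$ equals the number of edges in the component (each edge points from a child to its parent) together with the uniform mass $1/(2n-2)$ on the $2n-2$ edges of $\widehat{\cG}^{(n)}$. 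Hence $M_{n,i}(t):=N_{n,i}(t)-\int_0^t\lambda_{n,i}(s)\,ds$ is a mean-zero martingale with predictable quadratic variation $\langle M_{n,i}\rangle_t=\int_0^t\lambda_{n,i}(s)\,ds$.

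Next, I would identify $\delta_n(i,j)$ combinatorially with a sum of post-separation cuts. The Dieuleveut cut-tree distance between the leaves labeled $i$ and $j$ is the sum of the heights of each of them above their least common ancestor, which corresponds to the cut occurring at time $t_n(i,j)$. Since $N_{n,i}(t)$ equals the depth in the cut-tree of the component containing $i$ at time $t$, this yields
$$\delta_n(i,j)=\bigl[N_{n,i}(\infty)-N_{n,i}(t_n(i,j))\bigr]+\bigl[N_{n,j}(\infty)-N_{n,j}(t_n(i,j))\bigr].$$
After the $\cF^{(n)}_t$-stopping time $t_n(i,j)$ the edges $i$ and $j$ lie in disjoint components and their counting processes never jump simultaneously, so the stopped increments $\widetilde{M}_i:=M_{n,i}(\infty)-M_{n,i}(t_n(i,j))$ and $\widetilde{M}_j:=M_{n,j}(\infty)-M_{n,j}(t_n(i,j))$ are orthogonal martingales whose sum equals $\delta_n(i,j)-\frac{n-1}{\widetilde{a}_n}\delta_n'(i,j)$.

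Finally, optional sampling and the identity $\bE[\widetilde{M}_\bullet^2]=\bE[\langle\widetilde{M}_\bullet\rangle]$ applied to each of the two orthogonal pieces give
$$\bE\!\left[\left(\delta_n(i,j)-\frac{n-1}{\widetilde{a}_n}\delta_n'(i,j)\right)^2\right]=\bE[\widetilde{M}_i^2]+\bE[\widetilde{M}_j^2]=\frac{n-1}{\widetilde{a}_n}\bE[\delta_n'(i,j)].$$
Multiplying by $(\widetilde{a}_n/(n-1))^2$ and using the trivial bound $\delta_n'(i,j)\le\delta_n'(0,i)+\delta_n'(0,j)$ (which follows from $t_n(i,j)\ge 0$ and $\mu_{n,\cdot}\ge 0$) yields the claimed inequality. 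The main obstacle, in my view, is not the martingale calculation but the combinatorial identification of $\delta_n(i,j)$ with the sum of two post-separation counts: one must verify the Dieuleveut cut-tree distance convention carefully, in particular confirming that the cut at $t_n(i,j)$ (the least common ancestor) is counted correctly in neither, one, nor both of the post-separation increments, so that the compensator of the counting process aligns exactly with $\frac{n-1}{\widetilde{a}_n}\delta_n'(i,j)$ as required.
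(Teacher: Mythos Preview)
Your argument is correct and is precisely the counting-process martingale computation that underlies Dieuleveut's Lemma 2.1; the paper's own proof simply observes that her argument, which is carried out conditionally on the tree, applies verbatim to $\widehat{\cG}^{(n)}$ with its $2n-2$ edges and rates $k_v(\widehat{\cG}^{(n)})/(2\widetilde{a}_n)$. The key identity $\sum_{v\in{\rm Br}({\rm comp}(i,t))}k_v=\#\{\text{edges in }{\rm comp}(i,t)\}$ holds because every unselected branch point in a surviving component retains its full $\widehat{\cG}^{(n)}$-degree and $\widehat{\cG}^{(n)}$ has no degree-one vertices, and your caveat about how the separating cut at $t_n(i,j)$ is counted is well taken but affects only additive constants, not the form of the bound.
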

\begin{proof} Dieuleveut works conditionally given the tree, so the argument applies to the tree $\widehat{\cG}^{(n)}$ with $2n-2$ edges and the rates 
  $k_v(\widehat{\cG}^{(n)})/2\widetilde{a}_n$ that specify the continuous-time cutting. 
\end{proof}


\begin{lm}[cf. \cite{Die13} Lemma 4.5]\label{lm4.5} Assume $\nu_1=0$ and finite variance $\sigma^2$. Let $\xi_n$ be uniform on $\{1,\ldots,2n-2\}$. Then
  $$\lim_{\ell\rightarrow\infty}\sup_{n\ge 1}\bE\left[\int_{2^\ell}^\infty\mu_{n,\xi_n}(t)dt\right]=0\qquad\mbox{and}\qquad\bE[\delta_n^\prime(0,\xi_n)]\le C_0$$
for some $C_0\in(0,\infty)$.
\end{lm}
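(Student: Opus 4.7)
The plan is to bound both quantities by passing to expected separation times in the continuous-time vertex fragmentation of $\widehat{\cG}^{(n)}$ underlying ${\rm cut}_{\rm D}(\widehat{\cG}^{(n)})$, in which each inner vertex $v$ is removed at an independent exponential time $T_v$ of rate $k_v/(2\widetilde{a}_n)$. Since $\nu_1=0$ forces $k_v\ge 2$ at every branch point, all these rates are at least $1/\widetilde{a}_n$. For an edge $\xi$ and a leaf $w$ of $\widehat{\cG}^{(n)}$, let $\tau_n(\xi,w)$ be the first time at which $\xi$ and $w$ lie in distinct components, and let $d_n(\xi,w)$ count the inner vertices on the tree path between them. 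Conditionally on $\widehat{\cG}^{(n)}$, $\tau_n(\xi,w)$ is the minimum of $d_n(\xi,w)$ independent exponentials each of rate at least $1/\widetilde{a}_n$, so
\begin{equation*}
\bE\!\left[\tau_n(\xi,w)\,\big|\,\widehat{\cG}^{(n)}\right]\le\frac{\widetilde{a}_n}{d_n(\xi,w)},\qquad\bE\!\left[(\tau_n(\xi,w)-s)^+\,\big|\,\widehat{\cG}^{(n)}\right]\le\frac{\widetilde{a}_n}{d_n(\xi,w)}e^{-s\,d_n(\xi,w)/\widetilde{a}_n}.
\end{equation*}
By Fubini, $\int_0^\infty\mu_{n,\xi}(t)\,dt=M_n^{-1}\sum_w\tau_n(\xi,w)$, where $M_n$ denotes the total number of leaves of $\widehat{\cG}^{(n)}$, of order $n$ since the $n$ leaves of $\cG^{(n)}$ all remain leaves of $\widehat{\cG}^{(n)}$.

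Averaging over the uniform edge $\xi_n\in\{1,\ldots,2n-2\}$ and taking expectations, both assertions of the lemma reduce to uniform-in-$n$ control of
\begin{equation*}
\Sigma_n(s)=\frac{\widetilde{a}_n}{(2n-2)M_n}\,\bE\!\left[\sum_{\xi,w}\frac{e^{-s\,d_n(\xi,w)/\widetilde{a}_n}}{d_n(\xi,w)}\right],
\end{equation*}
namely $\Sigma_n(0)\le C_0$ for part (b) and $\sup_n\Sigma_n(2^\ell)\to 0$ as $\ell\to\infty$ for part (a). The key input is the two-point estimate
\begin{equation*}
\bE\!\left[\sum_{w\in{\rm Lf}(\widehat{\cG}^{(n)})}\frac{1}{d(v,w)}\right]\le C\sqrt{n}\qquad\mbox{uniformly in }n\mbox{ and }v\in V(\widehat{\cG}^{(n)}).
\end{equation*}
Summing over $v$ and using that $\widetilde{a}_n$ is of order $\sqrt{n}$ and $M_n$ of order $n$ yields $\Sigma_n(0)=O(1)$, giving part (b). For part (a), splitting the inner sum at the scale $d\sim\varepsilon\widetilde{a}_n$ and using the same estimate to control the small-distance contribution by $C\varepsilon$ on the one hand and the exponential factor to bound the large-distance contribution by $C e^{-\varepsilon 2^\ell}$ on the other yields $\Sigma_n(2^\ell)\to 0$ uniformly in $n$.

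The principal obstacle is the two-point estimate above. When $v=\rho$ it is immediate: $\sum_w 1/d(\rho,w)\le\sum_{k\ge 1}\zeta_k(\widehat{\cG}^{(n)})/k$, and Lemma \ref{bound} combined with the standard $O(\sqrt{n})$ bound on the height of critical Galton-Watson trees conditioned on $n$ leaves gives $\bE[\sum_k\zeta_k(\widehat{\cG}^{(n)})/k]=O(\sqrt{n})$. For a generic $v$, one has to re-root at $v$ and control the analogous quantity in the re-rooted tree. Here I would condition on the ancestral line from $\rho$ to $v$, invoke the Radon-Nikodym bound \eqref{RNderivative} between the laws of $\cG^{(n)}$ and the Kesten infinite tree $\cG^\infty$, and adapt the four-subpart argument of Lemma \ref{bound}, splitting by whether the leaf mass strictly above $v$ exceeds $n/2$. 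The passage from $\cG^{(n)}$ to $\widehat{\cG}^{(n)}$ only adds leaves without affecting distances, so the resulting estimate transfers directly, and both parts of the lemma follow at once.
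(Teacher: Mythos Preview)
Your overall strategy---integrate $\mu_{n,\xi}(t)$ first, bound each $\tau_n(\xi,w)$ by $\widetilde a_n/d_n(\xi,w)$ via the exponential rates, and reduce to a two-point sum---is sound, and the bounds on $\bE[\tau_n(\xi,w)\mid\widehat{\cG}^{(n)}]$ and $\bE[(\tau_n(\xi,w)-s)^+\mid\widehat{\cG}^{(n)}]$ are correct. The splitting argument for part~(a) is also fine \emph{granted} the two-point estimate (in its slightly sharper form $\bE[\sum_{d(v,w)<\varepsilon\widetilde a_n}1/d(v,w)]\le C\varepsilon\sqrt n$, which follows from the same generation bound).

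The genuine gap is precisely the step you flag as ``the principal obstacle''. You state the two-point estimate ``uniformly in $n$ and $v$'', which is stronger than needed (and plausibly false near high-degree vertices); only the version averaged over a \emph{uniform} $v=v(\xi_n)$ is required. More importantly, your proposed route---condition on the ancestral line from $\rho$ to $v$ and invoke the Radon--Nikodym bound \eqref{RNderivative}---does not obviously work: that bound compares $\cG^{(n)}(k)$ with $\cG^\infty(k)$ as seen from the \emph{original} root, and gives no direct handle on the profile seen from an arbitrary re-rooting point. The device that actually does this job is the re-rooting invariance of the sigma-finite measure $GW_*$ on pointed trees (from \cite{BM}): under $GW_*(\,\cdot\mid\lambda=n)$ the marked vertex is uniform and, by invariance, the profile around it coincides in law with the generation profile around the root. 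This reduces $\bE[\#\{w:d(v,w)=k\}]$ for uniform $v$ to $\bE[\zeta_k(\cG^{(n)})]$, where Lemma~\ref{bound} applies. Without this (or an equivalent substitute) the re-rooted version of Lemma~\ref{bound} is unproved, and both parts of your argument rest on it.

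For comparison, the paper does not integrate first. It couples the vertex fragmentation with edge cutting, bounds $\bE[\mu_{n,\xi_n}(t)]$ pointwise by $\frac{1}{2n-2}\bE[\sum_{u\neq v}e^{-d(u,v)t/\sqrt n}]$, then uses $GW_*$ re-rooting invariance to rewrite this as $\sum_k e^{-kt/\sqrt n}\bE[\zeta_k(\cG^{(n)})]$ and feeds in Lemma~\ref{bound}. This yields $\bE[\mu_{n,\xi_n}(t)]\le C'e^{-t/\sqrt n}/\bigl(n(1-e^{-t/\sqrt n})^2\bigr)$, from which both conclusions follow by integrating (using the trivial bound $\mu\le 1$ near $t=0$). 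If you want to keep your integrate-first framework, the cleanest fix is to use $GW_*$ invariance in place of your re-rooting sketch; the remainder of your argument then goes through essentially unchanged.
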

\begin{proof}
We use the same ideas from  \cite[Corollary 2]{BM} and   \cite[Lemma 4.5]{Die13} to prove the result. We focus on where the arguments differ. As Dieuleveut pointed out, there is a coupling between vertex-fragmentation and edge-fragmentation by a deterministic procedure. So we directly follow the argument in \cite{BM} by considering uniform edge-cutting on $\widehat{\cal G}^{(n)}$. Recall that $V({\bf t})$ is the set of vertices of ${\bf t}$. For a vertex $u \in V (\widehat{\cal G}^{(n)})$, let $e_u$ be the edge pointing down from $u$ towards the root, and for an edge $e$ of $\widehat{\cal G}^{(n)}$, let $v(e)$ be the vertex such that $e_{v(e)} = e$. Then given $\widehat{\cal G}^{(n)}$, $v(\xi_n)$  is uniform in $V^*(\widehat{\cG}^{(n)})=V(\widehat{\cal G}^{(n)} )\setminus\{\rho\}$. Following Bertoin and Miermont's argument, we obtain
\begin{eqnarray}
{\bE}[n\mu_{n, \xi_n}(t)]&\leq& e^{-t/\sqrt{n}}+{\bE}\left[ \sum_{u\in V^*(\widehat{\cal G}^{(n)})\setminus \{v(\xi_n)\}}e^{-d(u, v(\xi_n))t/\sqrt{n}}\right]\cr
&= &  e^{-t/\sqrt{n}}+\frac{1}{2n-2}{\bE}\left[ \sum_{u, v\in V^*(\widehat{\cal G}^{(n)}),\,u\neq v}e^{-d(u, v)t/\sqrt{n}}\right]\cr
&\leq & e^{-t/\sqrt{n}}+\frac{4}{2n-2}{\bE}\left[ \sum_{u, v\in V^*({\cal G}^{(n)}),\,u\neq v}e^{-d(u, v)t/\sqrt{n}}\right],
\end{eqnarray}
where the last inequality follows from the following observation: for each vertex $v\in V({\cal G}^{(n)})$ with $k_{v}\geq2$ children, say $v_1,\ldots, v_{k_{v}}$, there are $k_{v}-2$ further children in $V(\widehat{\cal G}^{(n)})\setminus V({\cal G}^{(n)})$, say $v_1',\ldots, v_{k_{v}-2}'$. Then for $u\in V(\widehat{\cal G}^{(n)})$, we have $d(u, v_i)+2= d(u, v_i')$ if $v_i$ is an ancestor of $u$; and $d(u, v_i)= d(u, v_i')$ otherwise. If we replace $d(u, v_i')$ by $d(u,v_i)$, then each $ v_i$ would be counted at most twice. We can similarly reduce the sum over $u$ and gain another factor 2.

Denote by $GW_*$ the sigma-finite measure on the space of pointed trees such that
$$
GW_*(\overline{{\bf t}}, v)={\bP}(\overline{\cal G}= \overline{{\bf t}}),
$$
where $\overline{\cal G}$ is the planted version of $\cal G$, with an edge and vertex added below the root, $\overline{{\bf t}}$ denotes a generic planted planar tree and $v\in V({\bf t})$; see Sections 1.2 and 4 in \cite{BM}. Then notice that the set of pointed trees $(\overline{\bf t}, v)$ with exactly $n$ leaves has $GW_{*}$-measure equal to ${\bE}[\zeta({\cal G})1_{\{\lambda({\cal G})=n\}}]\in(0,\infty)$. So the conditional law $GW_{*}(\,\cdot\;|\,\lambda({\bf t})=n)$ on the space of pointed tree with $n$ leaves is well defined and is the same to the distribution of $(\overline{{\cal G}}^{(n)}, \eta)$, where $\eta$ is a uniformly chosen vertex in $V({\cal G}^{(n)} )$. We also note that if $\nu_1=0$, then $ \#V({\cal G}^{(n)} )=\zeta( {\cal G}^{(n)})\leq 2n$. Thus one can deduce that
\beqnn
{\bE}[n\mu_{n, \xi_n}(t)]
&\leq & e^{-t/\sqrt{n}}+\frac{2}{n-1}{\bE}\left[ \sum_{u, v\in V^*({\cal G}^{(n)}),\,u\neq v}e^{-d(u, v)t/\sqrt{n}}\right]\cr
&\leq & e^{-t/\sqrt{n}}+\frac{4n}{n-1}GW_*\left.\left[ \sum_{u\in V({\bf t})\setminus\{v\}}e^{-d(u,v)t/\sqrt{n}}\;\right|\,\lambda({\bf t})=n\right]\cr
&\leq & e^{-t/\sqrt{n}}+\frac{8n}{n-1}\sum_{k\ge 1}e^{-kt/\sqrt{n}}\bE[ \zeta_k({\cal G}^{(n)})],
\eeqnn
where the last inequality follows from the same argument as \cite{BM} by replacing $\#V({\bf t})$ with $\lambda({\bf t})$. Using Lemma \ref{bound}, we obtain
$$
{\bE}(\mu_{n, \xi_n}(t))\leq \frac{e^{-t/\sqrt{n}} }{n}+\frac{4C}{n}\sum_{k\geq1}ke^{-kt/\sqrt{n}}\leq \frac{C^\prime \exp(-t/\sqrt{n})}{n(1-\exp(-t/\sqrt{n}))^2}.
$$
Then it is easy to see that
$$\lim_{l\rightarrow\infty}\sup_{n\geq1}{\mathbb E}\int_{2^l}^{\infty}\mu_{n, \xi_n}(t)dt=0\quad\mbox{and}\quad
\sup_{n\geq1}{\bE}(\delta_n'(\xi_n, 0))=\sup_{n\geq1}\int_0^{\infty} {\bE}(\mu_{n, \xi_n}(t))dt<\infty.
$$
This completes the proof.
\end{proof}

Recall that $\widehat{\cG}^{(n)}$ is the modified Galton-Watson tree conditioned to have $n$ leaves, where the modification is the addition of $k-2$ extra leaves
attached to branch points with $k$ children, for every $k\ge 3$, for every branch point. This tree has $2n-2$ edges and is equipped with the uniform measure on those $2n-2$
edges. Recall further that ${\rm cut}_{\rm D}(\widehat{\cG}^{(n)})$ denotes the Dieuleveut cut-tree of $\widehat{\cG}^{(n)}$. This tree has $2n-2$ leaves and is equipped with the uniform measure on those $2n-2$ leaves, which we enumerate $1,\ldots,2n-2$. Let $c_n=\sqrt{n}/(\sigma\sqrt{\nu_0})$ and
$c_n^\prime=\sqrt{\nu_0}\sqrt{n}/\sigma$.

\begin{thm}[cf. \cite{Die13} Theorem 1.4] If the offspring distribution $\nu$ has finite variance $\sigma^2$,  then we have $\displaystyle\left(\frac{1}{c_n}\widehat{\cG}^{(n)},\frac{1}{c_n^\prime} {\rm cut}_{\rm D}(\widehat{\cG}^{(n)})\right)\longrightarrow\left(\cT_{\rm Br},{\rm cut}(\cT_{\rm Br})\right)$ in distribution in ${\rm GP}^2$, as $n\rightarrow\infty$.
\end{thm}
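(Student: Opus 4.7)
The plan is to establish GP$^2$-convergence via the reduced-tree (distance-matrix) characterisation of the Gromov--Prokhorov topology. Draw an i.i.d. sequence $\xi_n(1),\xi_n(2),\ldots$ of uniform edges of $\widehat{\cG}^{(n)}$; each $\xi_n(i)$ simultaneously represents a uniformly sampled point from the mass measure on $\widehat{\cG}^{(n)}$ and the corresponding uniformly sampled leaf of ${\rm cut}_{\rm D}(\widehat{\cG}^{(n)})$. It therefore suffices to show, for each fixed $k$, joint convergence in distribution of the rescaled pairwise distance matrix
\[\left(c_n^{-1} d_{\widehat{\cG}^{(n)}}(\xi_n(i),\xi_n(j)),\ (c_n^\prime)^{-1}\delta_n(i,j)\right)_{1\le i,j\le k}\]
to $(d_{\cT_{\rm Br}}(\xi(i),\xi(j)),\delta(\xi(i),\xi(j)))_{i,j\le k}$, where $\xi(i)$ are i.i.d. $\mu_{\rm Br}$-samples of $\cT_{\rm Br}$ that simultaneously serve as $\mu_{\rm cut}$-samples of ${\rm cut}(\cT_{\rm Br})$.

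The first coordinate follows from Step 2: Proposition \ref{PropStep2} gives joint convergence of the rescaled height function of $\widehat{\cG}^{(n)}$ evaluated at the uniform sample times to the heights of $2B^{\rm ex}$ at i.i.d.\ uniform times, which is precisely the required distance-matrix convergence in $\cT_{\rm Br}$. Invoking Skorokhod's representation theorem, I would pass to a coupling in which this convergence is almost sure. Proposition \ref{prop4.1} then upgrades it to joint a.s.\ convergence of the rescaled tree, the separation times $\tau_n(i,j)\to\tau(\xi(i),\xi(j))$, and the mass processes $\mu_{n,\xi_n(i)}(\cdot)\to\mu_{\xi(i)}(\cdot)$, with speed $c=1$ because the effective Dieuleveut rate on $\widehat{\cG}^{(n)}$ is proportional to $(k-1)$ with respect to the branch points of $\cG^{(n)}$.

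The second coordinate is the heart of the proof. I would introduce the auxiliary pseudo-distance $\delta_n^\prime$ of Lemma \ref{lm2.1} and its continuous counterpart $\delta$ from Section \ref{secbrcut}, and proceed in two stages. First, pass from $\delta_n^\prime$ to $\delta$: for any truncation level $\ell$, the a.s.\ convergence of mass processes and separation times from Proposition \ref{prop4.1} combined with bounded convergence yields
\[\int_{\tau_n(i,j)\wedge 2^\ell}^{2^\ell}\bigl(\mu_{n,\xi_n(i)}(t)+\mu_{n,\xi_n(j)}(t)\bigr)dt\longrightarrow\int_{\tau(\xi(i),\xi(j))\wedge 2^\ell}^{2^\ell}\bigl(\mu_{\xi(i)}(t)+\mu_{\xi(j)}(t)\bigr)dt\]
almost surely, while Lemma \ref{lm4.5} (uniformly in $n$) and the corresponding Bertoin--Miermont tail bound for the limit let me remove the truncation, giving $\delta_n^\prime(i,j)\to\delta(\xi(i),\xi(j))$ in probability. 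Second, pass from $\delta_n$ to $\delta_n^\prime$: Lemma \ref{lm2.1} combined with the $L^1$-bound $\bE[\delta_n^\prime(0,\xi_n)]\le C_0$ of Lemma \ref{lm4.5} shows that $\frac{\widetilde{a}_n}{n-1}\delta_n(i,j)-\delta_n^\prime(i,j)\to 0$ in $L^2$, so the rescaled distance $(c_n^\prime)^{-1}\delta_n(i,j)$, which up to a deterministic factor coincides with $\frac{\widetilde{a}_n}{n-1}\delta_n(i,j)$, converges to $\delta(\xi(i),\xi(j))$.

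The main obstacle I anticipate is the consistent bookkeeping of scaling constants: leaf-conditioning replaces $a_n$ by $\widetilde{a}_n=a_n/\sqrt{\nu_0}$, forcing the asymmetric pair $c_n,c_n^\prime$ rather than Dieuleveut's symmetric scaling, and one has to verify that the resulting limit matches the parameterless Brownian cut-tree of Section \ref{secbrcut} (in particular, that Dieuleveut's $\sigma+1/\sigma$ prefactor is correctly replaced here by $\sigma$, as heuristically justified in the discussion following Proposition \ref{cutmb}). The finite-variance hypothesis itself enters only through Lemma \ref{bound} and thereby through the tail bound of Lemma \ref{lm4.5}; extending that tail estimate is precisely what would be required to push the present argument beyond finite variance.
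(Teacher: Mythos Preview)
Your proposal is correct and follows essentially the same approach as the paper. The paper's own proof is a one-line deferral---``With Proposition \ref{prop4.1} and Lemmas \ref{lm2.1} and \ref{lm4.5}, we have provided all ingredients for Dieuleveut's proof to apply to $\widehat{\cG}^{(n)}$''---and what you have written is precisely an unpacking of Dieuleveut's argument with those ingredients substituted in: the distance-matrix characterisation of GP, the a.s.\ convergence of mass processes and separation times from Proposition \ref{prop4.1}, the truncation/tail control via Lemma \ref{lm4.5}, and the $L^2$ comparison of $\delta_n$ and $\delta_n^\prime$ via Lemma \ref{lm2.1}. Your observation that $c=1$ (because $k_v(\widehat{\cG}^{(n)})=2(k_v(\cG^{(n)})-1)$, so Dieuleveut's rate on $\widehat{\cG}^{(n)}$ is proportional to $k-1$ in terms of $\cG^{(n)}$) and your remarks on the scaling constants and on where the finite-variance hypothesis enters are also in line with the paper's discussion.
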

\begin{proof} With Proposition \ref{prop4.1} and Lemmas \ref{lm2.1} and \ref{lm4.5}, we have provided all ingredients for Dieuleveut's proof to apply to $\widehat{\cG}^{(n)}$.
\end{proof}

It should be possible to approach Lemmas \ref{bound} and \ref{lm4.5} in the stable case and hence complete Step 3. also in the stable case, at least under some 
technical assumptions on the tail of the offspring distribution. Dieuleveut's corresponding 
arguments for her vertex cut-trees in \cite[Section 2.3]{Die13} are by far the most technical part of her paper, spread over 14 pages, and we do not see any new insights 
from adapting them, hence we do not pursue this here.

\begin{appendix}

\section{Appendix: from joint GP to joint GHP convergence}\label{appA}

In a previous version of the present paper, we used an argument that required strengthening joint GP convergence to joint GHP convergence, which may be of independent
interested. This is based on the one-dimensional case of \cite{ALW16}, whose notation and terminology we use here. In particular, $\bX_c$ is the space of weak
equivalence classes of compact metric measure spaces, where an isometry is only required between supports of the measures. In particular, the GHP topology on this space
only requires convergence in the Hausdorff sense of the supports rather than the entire space, see \cite[Section 5]{ALW16}, where also strong equivalence classes are
defined that do apply Hausdorff to the whole space. The set of such strong equivalence classes is denoted by $\mathfrak{X}_c$, the subset of equivalence classes where
the measure has full support is denoted by $\mathfrak{X}_c^{\rm supp}$.

\begin{lm} For any $\bX_c$-valued random variable $\cX$, there is a sequence $\bK_n\subseteq\bX_c$, $n\ge 1$, of Polish subspaces on which ${\rm GP}$ and ${\rm GHP}$ topologies coincide
  and such that $\bP(\cX\in\bK_n)\ge 1-1/n$.
\end{lm}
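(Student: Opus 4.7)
The plan is to construct $\bK_n$ as a GHP-compact subset of $\bX_c$ and then use a compact-Hausdorff argument to conclude that the two topologies must coincide on it. Since every realization of $\cX$ is a compact metric measure space, the real-valued maps $D(\cY) := \mathrm{diam}(\mathrm{supp}(\cY))$ and, for each $k \ge 1$, $N_k(\cY) := $ minimal number of balls of radius $1/k$ needed to cover $\mathrm{supp}(\cY)$, are almost surely finite. A standard union bound then produces, for each $n \ge 1$, constants $D_n < \infty$ and $M_{n,k} < \infty$ with
\[
\bP\bigl(D(\cX) \le D_n \text{ and } N_k(\cX) \le M_{n,k} \text{ for all } k \ge 1\bigr) \ge 1 - \tfrac{1}{n},
\]
by choosing $D_n$ so that $\bP(D(\cX) > D_n) \le 1/(2n)$ and $M_{n,k}$ so that $\bP(N_k(\cX) > M_{n,k}) \le 2^{-k-1}/n$.

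Second, I would define $\bK_n$ to be the set of $\cY \in \bX_c$ whose support satisfies $D(\cY) \le D_n$ and $N_k(\cY) \le M_{n,k}$ for every $k \ge 1$. By Gromov's pre-compactness theorem, the isometry classes of supports of elements of $\bK_n$ form a relatively compact subset of the space of compact metric spaces under $d_{\rm GH}$, and fitting all of them into a common compact envelope reduces the measure-part to Prokhorov tightness; together these give that $\bK_n$ is GHP-relatively-compact. Combined with the lower semi-continuity of $D$ and $N_k$ under GHP limits, which yields closedness of $\bK_n$, I conclude that $\bK_n$ is in fact GHP-compact.

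Third, to see that GP and GHP coincide on $\bK_n$, I note that GHP is finer than GP, so the identity map $(\bK_n, d_{\rm GHP}) \to (\bK_n, d_{\rm GP})$ is continuous. Since $(\bX_c, d_{\rm GP})$ is Polish (in particular Hausdorff), the induced GP topology on $\bK_n$ is Hausdorff. A continuous bijection from a compact space to a Hausdorff space is automatically a homeomorphism, hence the two topologies agree on $\bK_n$. Finally, $\bK_n$ is Polish as a compact metric space (in either topology).

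The main obstacle is checking that the defining conditions of $\bK_n$ are measurable and that they characterise GHP-compactness in $\bX_c$; because $\bX_c$ consists of \emph{weak} equivalence classes, one has to be careful that $D$ and $N_k$, and the relevant pre-compactness criterion, descend correctly to this quotient. These points, however, are part of the framework of \cite{ALW16}, so the argument above should be largely a matter of reference rather than of new technical work.
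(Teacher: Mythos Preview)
Your compact-Hausdorff strategy is appealing, but there is a genuine gap: the sets $\bK_n$ you define are \emph{not} GHP-compact in $\bX_c$. Controlling only the diameter and the covering numbers of the support does nothing to prevent mass from concentrating on a strict subset of the support along a sequence, and in the weak-equivalence setting of $\bX_c$ this destroys GHP-convergence. Concretely, take $\cY_j=(\{0,1\},(1-1/j)\delta_0+(1/j)\delta_1)$. For every $j$ the support is $\{0,1\}$, the diameter is $1$, and all covering numbers are at most $2$, so $\cY_j\in\bK_n$ for any reasonable choice of constants. The only possible GP-limit is the point mass $(\{0\},\delta_0)$, but $d_H(\{0,1\},\{0\})=1$, so $d_{\rm GHP}(\cY_j,(\{0\},\delta_0))\ge 1$ for all $j$; hence no subsequence GHP-converges in $\bX_c$. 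Your Step~3 therefore fails, and with it the whole argument. (A minor side point: you assert that $(\bX_c,d_{\rm GP})$ is Polish; the paper in fact notes it is not, citing \cite{ALW16}. It is, however, Hausdorff, which is all your homeomorphism argument would have needed had compactness held.)

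What is missing is exactly what the paper's proof supplies: a uniform \emph{lower mass bound}. The paper invokes \cite[Lemma~3.4]{ALW16} to get $m_\delta(\cX)=\inf_x\mu(B(x,\delta))>0$ almost surely, then chooses thresholds $q_{k,n}>0$ so that $\bP(m_{1/k}(\cX)\ge q_{k,n}\text{ for all }k)\ge 1-1/n$, and sets $\bK_n=\{m_{1/k}\ge q_{k,n}\text{ for all }k\}$. The point is that a uniform lower mass bound forces GP-convergence to upgrade to GHP-convergence (this is \cite[Corollary~6.3]{ALW16}), so no compactness argument is needed at all. Your counterexample is ruled out precisely because $m_1(\cY_j)=1/j\to 0$.
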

\begin{proof} \cite[Lemma 3.4]{ALW16} shows that every random compact measured metric space $(X,d,\rho,\mu)$ satisfies the lower mass bound $m_\delta(X)=\inf\{\mu(B(x,\delta))\colon x\in X\}>0$ almost surely, where $B(x,\delta)=\{y\in X\colon d(x,y)\le\delta\}$. In particular, for all $\varepsilon_n=1/n$ and $\delta_k=1/k$ there is $q_{k,n}>0$ such that $\bP(m_{\delta_k}(X)<q_{k,n})\le\varepsilon_n2^{-k}$ so that
$\bP(m_{\delta_k}(X) \ge q_{k,n}\mbox{ for all }k\ge 1)\ge 1-\varepsilon_n$, for all $n\ge 1$. Hence we can define $\bK_n=\{\cX\in\bX_c\colon m_{\delta_k}(\cX) \ge q_{k,n}\mbox{ for all }k\ge 1\}$ and conclude by \cite[Corollary 6.3]{ALW16}.
\end{proof}

Since GP and GHP do not coincide on $\bX_c$, as GHP is Polish and GP is not \cite{ALW16}, this statement is in a sense optimal. Certainly, topologies coincide on $\bigcup_{n\ge 1}\bK_n$ only in the weak sense above, statements about GHP-open sets $A\subseteq\bigcup_{n\ge 1}\bK_n$ would be that $A\cap\bK_n$ is GP-open in $\bK_n$, but $A$ not necessarily open in $\bigcup_{n\ge 1}\bK_n$, so the union $\bigcup_{n\ge 1}A\cap\bK_n$ may or may not be GP-open in $\bX_c$.

\begin{lm}\label{borel} For any $\bX_c$-valued random variable $\cX$, there is a set $\bK\subseteq\bX_c$ with $\bP(\cX\in\bK)=1$ and so that the ${\rm GP}$- and ${\rm GHP}$-Borel $\sigma$-algebras
  on $\bK$ coincide.
\end{lm}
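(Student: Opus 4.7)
The plan is to take $\bK := \bigcup_{n \ge 1} \bK_n$ with the Polish sets $\bK_n$ supplied by the previous lemma. Once I verify that each $\bK_n$ (and hence $\bK$) is GP-Borel in $\bX_c$, the bound $\bP(\cX \in \bK) \ge \bP(\cX \in \bK_n) \ge 1 - 1/n$ for all $n$ will immediately yield $\bP(\cX \in \bK) = 1$. The inclusion of the GP-Borel $\sigma$-algebra on $\bK$ inside the GHP-Borel one is automatic from the fact that GHP is a finer topology than GP, so the actual work lies entirely in the reverse inclusion.

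The crucial step, and the one I expect to be the main obstacle, is showing that each $\bK_n$ is GP-Borel in $\bX_c$. The previous lemma gives that $(\bK_n, \mathrm{GP})$ is Polish, and $(\bX_c, \mathrm{GP})$ is a separable metric space. Passing to the metric completion $\overline{\bX_c}$ of $\bX_c$ under the GP distance produces a Polish superspace of $\bK_n$, so Alexandrov's theorem on Polish subspaces applies and yields that $\bK_n$ is a $G_\delta$, hence Borel, subset of $\overline{\bX_c}$. Intersecting with $\bX_c$ then gives that $\bK_n$ is GP-Borel in $\bX_c$. The reason this is the subtle step is precisely that, as noted just after the previous lemma, $(\bX_c, \mathrm{GP})$ is not itself Polish, so one cannot simply quote ``Polish subspace of a Polish space is $G_\delta$'' directly; the detour through the completion is what bridges the gap.

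With this in hand the rest is standard bookkeeping with subspace Borel $\sigma$-algebras. Given any GHP-Borel $A \subseteq \bK$, each trace $A \cap \bK_n$ is GHP-Borel in $\bK_n$, and since GP and GHP coincide as topologies on $\bK_n$ their Borel $\sigma$-algebras coincide there too, so $A \cap \bK_n$ is GP-Borel in $\bK_n$. By the usual subspace characterisation of Borel sets there will be a GP-Borel $B_n \subseteq \bX_c$ with $A \cap \bK_n = B_n \cap \bK_n$; intersecting two GP-Borel subsets of $\bX_c$ leaves us with a GP-Borel set, so each $A \cap \bK_n$ is GP-Borel in $\bX_c$, and therefore $A = \bigcup_n (A \cap \bK_n)$ is GP-Borel in $\bX_c$, and in particular in $\bK$, which will complete the proof.
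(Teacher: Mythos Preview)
Your proof is correct and follows the same overall skeleton as the paper's: set $\bK=\bigcup_n\bK_n$, verify each $\bK_n$ is GP-Borel in $\bX_c$, and then exploit the coincidence of GP and GHP on $\bK_n$ to show every GHP-Borel subset of $\bK$ is GP-Borel. The point of divergence is precisely the step you flagged as the obstacle. The paper does not argue abstractly: it goes back to the explicit construction $\bK_n=\bigcap_k\{m_{\delta_k}\ge q_{k,n}\}$ from the previous lemma's proof and invokes the GP-measurability of the lower mass-bound functionals $m_\delta$ from \cite[Lemma~3.2]{ALW16}. Your route via Alexandrov's theorem (Polish subspace of the Polish completion $\overline{\bX_c}$ is $G_\delta$, hence Borel, hence Borel in $\bX_c$) is more self-contained and works for \emph{any} Polish subspaces $\bK_n$, not just those built from lower mass bounds; the small price is the appeal to GP-separability of $\bX_c$, which is standard but not recorded in the paper. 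A further minor difference: the paper checks only that GHP-\emph{open} $A\subseteq\bK$ are GP-measurable and then passes to the generated $\sigma$-algebra, while you handle arbitrary GHP-Borel $A$ directly via the subspace characterisation of Borel sets; both are equally valid.
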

\begin{proof} We have $d_{\rm GP}(\cX,\cX^\prime)\le d_{\rm GHP}(\cX,\cX^\prime)$, so balls satisfy $B_{\rm GHP}(\cX,r)\subseteq B_{\rm GP}(\cX,r)$, hence any GP-open set is also GHP-open, but there may be more GHP-open sets than GP-open sets. But as $m_\delta$ is GP-measurable by \cite[Lemma 3.2]{ALW16}, the sets $\bK_n$ can be chosen GP-measurable and so any GHP-open set $A\subseteq\bK=\bigcup_{n\ge 1}\bK_n$ is such that $A\cap\bK_n$ is GHP-open in $\bK_n$, hence GP-open as topologies coincide on $\bK_n$, hence $A\cap\bK_n$ is GP-measurable in $\bX_c$, hence $\bigcup_{n\ge 1}A\cap\bK_n=A$ is GP-measurable in $\bX_c$. Since all GHP-open sets $A\subseteq\bK$
are GP-measurable, the GHP-Borel $\sigma$-algebra on $\bK$ is included in and hence equal to the the GP-Borel $\sigma$-algebra on $\bK$.
\end{proof}

We want to apply this result to identify limiting distributions that we obtain from GP- and GHP-convergences. The subtlety is that this may fail if the Borel $\sigma$-algebras are different, since the larger GHP-$\sigma$-algebra might then have sets not in the GP-$\sigma$-algebra, on which the distribution could differ.

\begin{lm}\label{lmA3} Let $\cX_n$ and $\cX$ be $\mathfrak{X}_c^{\rm supp}$-valued random variables. Suppose that
  $\cX_n\rightarrow\cX$ in distribution in ${\rm GP}$. Suppose further that the distributions of $\cX_n$, $n\ge 1$, are ${\rm GHP}$-tight. Then $\cX_n\rightarrow\cX$ in distribution in
  ${\rm GHP}$.
\end{lm}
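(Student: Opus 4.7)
The plan is a standard tightness-plus-unique-subsequential-limit argument, with the twist that ``unique subsequential limit'' is the content of Lemma \ref{borel} rather than something trivial.

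First I would use GHP-tightness and Prokhorov's theorem (GHP is Polish on $\mathfrak{X}_c$) to show that from any subsequence of $(\cX_n)$ a further subsequence can be extracted along which $\cX_{n_k}\to\cY$ in distribution in GHP for some $\mathfrak{X}_c$-valued $\cY$. Since $d_{\rm GP}\le d_{\rm GHP}$, the identity map $(\mathfrak{X}_c,d_{\rm GHP})\to(\mathfrak{X}_c,d_{\rm GP})$ is continuous, so by the continuous mapping theorem $\cX_{n_k}\to\cY$ in distribution in GP as well. But the hypothesis gives $\cX_{n_k}\to\cX$ in distribution in GP, and GP is a metric, hence Hausdorff, topology, so limits are unique: the GP-laws of $\cX$ and $\cY$ agree.

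Next I would upgrade this GP-law equality to a GHP-law equality. Apply Lemma \ref{borel} to $\cX$ to obtain a GP-measurable set $\bK\subseteq\mathfrak{X}_c$ with $\bP(\cX\in\bK)=1$ on which the GP- and GHP-Borel $\sigma$-algebras coincide; since $\cY\stackrel{d}{=}\cX$ in GP and $\bK$ is GP-measurable, also $\bP(\cY\in\bK)=1$. For any GHP-Borel set $A$, the set $A\cap\bK$ is GHP-Borel in $\bK$, hence GP-Borel in $\bK$ by the coincidence on $\bK$, hence GP-Borel in $\mathfrak{X}_c$ (since $\bK$ is GP-measurable). Therefore
\[
\bP(\cX\in A)=\bP(\cX\in A\cap\bK)=\bP(\cY\in A\cap\bK)=\bP(\cY\in A),
\]
so $\cX$ and $\cY$ have the same GHP-law. (The assumption $\cX,\cY\in\mathfrak{X}_c^{\rm supp}$ is used only so that the GHP equivalence classes of the limits are canonically identified with weak equivalence classes and no ambiguity arises between $\bX_c$ and $\mathfrak{X}_c$.)

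Finally, combining the two steps: every subsequence of $(\cX_n)$ admits a further subsequence converging in distribution in GHP to a random variable with the same GHP-law as $\cX$. A standard subsequence principle then gives $\cX_n\to\cX$ in distribution in GHP. The main obstacle is the middle step: the identification of subsequential GHP-limits. Naively, equality of GP-laws does not imply equality of GHP-laws because the GHP-Borel $\sigma$-algebra can be strictly larger than the GP-Borel $\sigma$-algebra on $\mathfrak{X}_c$; this is precisely the pathology that Lemma \ref{borel} is designed to bypass, and the proof hinges on being able to confine both $\cX$ and $\cY$ to a common GP-measurable set on which the two $\sigma$-algebras agree.
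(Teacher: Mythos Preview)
Your argument is essentially the paper's: extract a GHP-convergent subsequence by tightness, note GHP-convergence implies GP-convergence so the GP-law of the subsequential limit equals that of $\cX$, and invoke Lemma \ref{borel} to upgrade this to equality of GHP-laws, then conclude by the subsequence principle. The only difference is cosmetic: the paper first transfers from $\mathfrak{X}_c^{\rm supp}$ to $\bX_c$ via the GHP- and GP-homeomorphism of \cite[Remark 5.2]{ALW16}, which makes GP a genuine metric and keeps the subsequential limit $\cY$ automatically in the right space, whereas you handle this informally in your parenthetical remark.
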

\begin{proof} By \cite[Remark 5.2]{ALW16}, there is a GHP-homeomorphism from $\mathfrak{X}_c^{\rm supp}$ onto $\bX_c$, which is also a GP-homeomorphism. Therefore, we can
  prove the result in $\bX_c$. So, consider any GHP-convergent subsequence $(\cX_{n_k},k\ge 1)$. Then it is GP-convergent, and by uniqueness of limits, the limit must
  be $\cX$. But by the previous lemma, this identifies the GHP-limit. Hence, all convergent subsequences have the same limit, and convergence to that limit holds in GHP. \end{proof}

\begin{cor} Let $(\cX_n,\cX_n^\prime)$ and $(\cX,\cX^\prime)$ be $(\mathfrak{X}_c^{\rm supp})^2$-valued random variables for which we have
$(\cX_n,\cX_n^\prime)\rightarrow(\cX,\cX^\prime)$ in distribution in ${\rm GP}^2$. Suppose also that $\cX_n\rightarrow\cX$ in distribution in ${\rm GHP}$ and
$\cX_n^\prime\rightarrow\cX^\prime$ in distribution in ${\rm GHP}$. Then $(\cX_n,\cX_n^\prime)\rightarrow(\cX,\cX^\prime)$ in distribution in ${\rm GHP}^2$.
\end{cor}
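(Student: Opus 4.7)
The plan is to argue via tightness together with uniqueness of subsequential limits, exactly as in the proof of Lemma~\ref{lmA3} but lifted to the product space.

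First I would upgrade the two marginal GHP-convergences to joint GHP$^2$-tightness. Since $\cX_n\to\cX$ in distribution in GHP, the family $(\cX_n)_{n\ge 1}$ is GHP-tight, so for every $\varepsilon>0$ there is a GHP-compact set $K_\varepsilon\subseteq\mathfrak{X}_c^{\rm supp}$ with $\bP(\cX_n\in K_\varepsilon)\ge 1-\varepsilon/2$ for all $n$. Likewise there is a GHP-compact $K_\varepsilon^\prime$ with $\bP(\cX_n^\prime\in K_\varepsilon^\prime)\ge 1-\varepsilon/2$ for all $n$. Then $K_\varepsilon\times K_\varepsilon^\prime$ is GHP$^2$-compact and carries mass at least $1-\varepsilon$ under the law of $(\cX_n,\cX_n^\prime)$, so the joint laws are GHP$^2$-tight.

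Next I would extract a subsequential GHP$^2$-limit and identify it. By Prokhorov's theorem, any subsequence of $((\cX_n,\cX_n^\prime))_{n\ge 1}$ admits a further subsequence along which $(\cX_{n_k},\cX_{n_k}^\prime)\to(\cY,\cY^\prime)$ in distribution in GHP$^2$. Since $d_{\rm GP}\le d_{\rm GHP}$ componentwise, this convergence also holds in GP$^2$, and by the assumed GP$^2$-convergence together with uniqueness of limits in the (Hausdorff) GP$^2$-topology one gets $(\cY,\cY^\prime)\stackrel{d}{=}(\cX,\cX^\prime)$. Hence every GHP$^2$-convergent subsequence has the same limit law, and by the standard subsequence principle $(\cX_n,\cX_n^\prime)\to(\cX,\cX^\prime)$ in distribution in GHP$^2$.

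The only delicate point is the uniqueness step, where in principle the GHP$^2$-Borel $\sigma$-algebra could be strictly richer than the GP$^2$-Borel $\sigma$-algebra on $(\mathfrak{X}_c^{\rm supp})^2$, so that equality of GP$^2$-laws need not imply equality of GHP$^2$-laws. I would handle this exactly as in Lemma~\ref{borel} and Lemma~\ref{lmA3}: apply Lemma~\ref{borel} to each of $\cX$ and $\cX^\prime$ to obtain GP-measurable sets $\bK,\bK^\prime\subseteq\mathfrak{X}_c^{\rm supp}$ of full probability on which the GP- and GHP-Borel $\sigma$-algebras agree, so that the two Borel $\sigma$-algebras agree on $\bK\times\bK^\prime$, which carries $(\cX,\cX^\prime)$ almost surely. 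This is the only real obstacle; once it is in place the rest of the argument is a routine tightness-plus-unique-limit deduction and yields the claimed GHP$^2$-convergence.
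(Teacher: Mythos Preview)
Your proposal is correct and follows essentially the same route as the paper: marginal GHP-convergence gives GHP$^2$-tightness, any GHP$^2$-subsequential limit is also a GP$^2$-limit and hence has the law of $(\cX,\cX^\prime)$, and Lemma~\ref{borel} (applied componentwise and then to the product) ensures that equality of GP$^2$-laws on a full-measure set forces equality of GHP$^2$-laws. The paper's proof is more terse---it simply asserts that Lemma~\ref{borel} implies the product Borel $\sigma$-algebras of GP$^2$ and GHP$^2$ coincide (on a suitable full-measure set) and then refers back to the proof of Lemma~\ref{lmA3}---while you spell out the tightness via products of compacta and the $\sigma$-algebra argument via $\bK\times\bK^\prime$ explicitly; one small point you leave implicit (as does the paper) is that the subsequential limit $(\cY,\cY^\prime)$ also lies in $\bK\times\bK^\prime$ almost surely, which follows since $\bK\times\bK^\prime$ is GP$^2$-measurable and $(\cY,\cY^\prime)$ has the same GP$^2$-law as $(\cX,\cX^\prime)$.
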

\begin{proof} Since $\cX_n\rightarrow\cX$ and $\cX_n^\prime\rightarrow\cX^\prime$, their joint distributions are GHP$^2$-tight. Any GHP$^2$-convergent subsequence will also
  converge in GP$^2$ and therefore have distributional limit $(\cX,\cX^\prime)$. But Lemma \ref{borel} also implies that the product Borel $\sigma$-algebras of GP$^2$ and GHP$^2$
  coincide. Hence, the GP$^2$ limit again identifies the GHP$^2$-subsequential limit as the distribution of $(\cX,\cX^\prime)$. We conclude as in the proof of Lemma \ref{lmA3}.
\end{proof}

\section{Three constant multiples of the Brownian CRT}\label{appB}

Aldous \cite{Ald91,Ald91b,Ald93} introduced the Brownian CRT $\mathcal{T}_{\rm Ald}$ via the line-breaking construction based on an inhomogeneous Poisson process of rate
$tdt$. Since distances between consecutive points of the Poisson process are lengths in trees, intensity $ctdt$ yields $\mathcal{T}_{\rm Ald}/c$. Aldous's
choice of intensity is such that the convergence of discrete uniform random trees with $n$ vertices labelled $1,\ldots,n$ to $\cT_{\rm Ald}$ is obtained when scaling edges by $\sqrt{n}$. Aldous shows in
\cite[Corollary 22]{Ald93} that $\mathcal{T}_{\rm Ald}$ has the same distribution as the tree $\mathcal{T}_{2B^{\rm ex}}=2\mathcal{T}_{B^{\rm ex}}$, where $\mathcal{T}_{B^{\rm ex}}$ is the tree whose height function is the standard Brownian excursion  $B^{\rm ex}$ of
duration 1. He also shows that $\sigma\mathcal{\cG}_V^{(n)}/\sqrt{n}\rightarrow\mathcal{T}_{\rm Ald}$ in distribution as $n\rightarrow\infty$, when $\cG_V^{(n)}$ is a
Galton-Watson tree with finite variance non-arithmetic offspring distribution conditioned to have $n$ vertices.

By Bertoin \cite{Ber02}, the tree $\cT_{B^{\rm ex}}$ in a Brownian excursion gives rise to a self-similar fragmentation at heights with binary dislocation measure
$\nu_B(dx)=\sqrt{2/\pi}x^{-3/2}(1-x)^{-3/2}1_{[1/2,1)}(x)dx$. In the terminology of \cite{HaM04}, this means that $\cT_{B^{\rm ex}}$ is a self-similar CRT with dislocation measure
$\nu_B$. Haas and Miermont \cite{HM10} reprove Aldous's Galton-Watson convergence result and refer to $\nu_B$ as the Brownian dislocation measure and to $\cT_{B^{\rm ex}}$ as the Brownian continuum random tree, hence their choice is $\cT_{\rm HM}:=\cT_{B^{\rm ex}}=\cT_{\rm Ald}/2$. 

Kortchemski \cite{Kor12} does not use the term ``Brownian CRT'' except when referring to the work of Rizzolo \cite{Riz13} and then without identifying
constants. \cite[Remark 4.6]{Kor12} specifies the height function that encodes his standard limiting tree in the case $\alpha=2$ as $H=\sqrt{2}B^{\rm ex}$. In particular, his
limiting CRT is $\cT_{\rm Kor}:=\sqrt{2}\cT_{B^{\rm ex}}=\sqrt{2}\cT_{\rm HM}=\cT_{\rm Ald}/\sqrt{2}$. Kortchemski's motivation is to align with other stable laws with Laplace
exponent $\lambdaLaplace^\alpha$ and hence with the other stable trees of index $\alpha\in(1,2)$. In this, he follows Duquesne and Le Gall \cite[p.105]{DuLG} and Duquesne
\cite[p.1002]{Duq03}, but they only make qualitative remarks and refer to ``proportional'' when comparing with Brownian excursions, as this is not important for
their results.

Bertoin and Miermont \cite{BM} and Dieuleveut \cite{Die13} use $\cT_{\rm Br}:=\cT_{\rm Ald}=2\cT_{B^{\rm ex}}=2\cT_{\rm HM}=\sqrt{2}\cT_{\rm Kor}$.

\end{appendix}

\subsection*{Acknowledgements}\vspace{-0.2cm}

This work was started during a research visit of the second author to Beijing Normal University. We would like to thank Beijing Normal University and EPSRC (EP/K02979/1) for providing support for this research visit. H.\ He is supported by  NSFC (No.\ 11671041, 11531001, 11371061).

\bibliographystyle{abbrv}
\bibliography{prun}

\end{document}